\numberwithin{equation}{section}  
\newcommand{\lap}{\Delta}
\newcommand{\K}{\bar{K}}
\newcommand{\N}{\mathbb{N}}
\newcommand{\R}{\mathbb{R}}
\newcommand{\e}{\;\exists \;}
\newcommand{\fa}{\;\forall\;}
\renewcommand{\{}{\lbrace}
\renewcommand{\}}{\rbrace}
\newcommand{\var}{\varphi}
\newcommand\Item[1][]{%
  \ifx\relax#1\relax  \item \else \item[#1] \fi
  \abovedisplayskip=-0pt\abovedisplayshortskip=0pt~\vspace*{-\baselineskip}}
\newtheorem{thm}{Theorem}[section] 
\newtheorem{thm*}{Theorem}
\newtheorem{proposition}[thm]{Proposition}
\newtheorem{lemma}[thm]{Lemma}
\newtheorem{remark}[thm]{Remark}
\newtheorem{Condition}[thm]{Condition}
\newtheorem{definition}[thm]{Definition}
\title{Prescribing scalar curvatures: \\non compactness  versus critical points at infinity
\\
\
\\
Published in Geometric Flows} 
\author{Martin Mayer \\ 
\small Scuola Normale Superiore, Pisa, ITALY, 
\small  martin.mayer@sns.it
}
\begin{document}

\maketitle

\begin{abstract} 
 We illustrate an example of a generic, positive function $K$ on a Riemannian manifold to be conformally prescribed  as the scalar curvature, for which the corresponding Yamabe type
 $L^{2}$-gradient flow exhibits  non compact flow lines,  while a slight modification of  it is compact.  
\end{abstract}
\begin{center}
\small{\it Key Words:  Conformal geometry, scalar curvature,  critical points at infinity, geometric flows
\\
Subject classification numbers: 35B33 35R01 53A30 53C44  
}
\end{center}

\tableofcontents

\section{Introduction}\label{sec_introduction} 
 Within the setting of conformally prescribing the scalar curvature on a Riemannian manifold and in the context of the calculus of variations, i.e. by considering an associated energy functional,
we shall illustrate in a very particular case the difference of non compact flow lines of a given gradient flow to critical points at infinity, as we have discussed in \cite{MM3}, namely showing, that
 the volume preserving $L^{2}$- gradient flow \eqref{Yamabe_flow}, which is a natural analogon to the Yamabe flow and was studied in \cite{may-cv}, exhibits  one specific, single bubbling non compactness for exactly one energetic value of the variationally  associated  prescribed scalar curvature functional, while a suitable modification of this flow  eliminates any non compactness. And, as we shall see, the same holds true  for the  strong gradient type flow \eqref{gradient_flow} modified  to preserve the conformal volume just like \eqref{Yamabe_flow}.   Hence as a take away  those   non compact flow lines do not induce critical points at infinity, cf.  \cite{MM3}, i.e. these flows  lead to variationally unmotivated singularities and are hence as geometric flows evidently not the best choice in the context of the calculus of variations, i.e. for energetic deformations.   
 
 However such gradient type flows, whether weak or strong, i.e. with respect to a $L^{2}$- or $W^{1,2}$-gradient,  are of interest in their own right apart from their usefulness in proving mere existence results to the underlying elliptic problem of prescribing the scalar curvature on a Riemannian manifold conformally, in particular due to the naturality  of $L^{2}$-gradient flows for a geometric problem. 
 
 \medskip 
 We wish to mention some works relevant to the flow analysis. 
 \begin{enumerate}[label=(\roman*)]
  \item  The most simple case evidently is, when the function $K$ to be prescribed is constant, e.g. $K=1$, and the underlying manifold is the standard sphere $\mathbb{S}^{n}$, in which case  flow convergence is known, cf. \cite{BahriCriticalPointsAtInfinity}, \cite{Ye}, with exponential speed, cf. \cite{short_proof_Brendle}. 
  \item  Later on and based on the positive mass theorem also on non spherical manifolds flow convergence in the Yamabe case $K=1$ was established, cf. \cite{Ye}, \cite{StruweLargeEnergies}, \cite{BrendleArbitraryEnergies}, with a subsequent analysis on upper and lower bounds of the speed of convergence, cf. \cite{SlowConvergence}. 
  \item  Returning to the spherical case $M=\mathbb{S}^{n}$, but considering a non constant function $K$ to be conformally prescribed as the scalar curvature, flows and their lack of compactness were first 
 analysed and characterised in  \cite{BahriCriticalPointsAtInfinity}, \cite{Bahri_Addendum_To_Bible} and \cite{Bahri_Coron_3_Sphere}
 in case $n=3$. For higher dimensional cases  we refer to \cite{BenAyed_n=4} for $n=4$ and  to \cite{MM3}
for $n\geq 5$, see also \cite{MM1},\cite{MM2} and \cite{MM4}.
  \item  Finally the case of a general Riemannian manifold $M$ with non constant $K$ to be prescribed, to which the present work belongs,  has been less studied with respect to an analysis of gradient flows. We point in case of a positive Yamabe invariant of $M$ to \cite{MM3} for a classification of non compactness in dimensions $n\geq 5$ and to \cite{may-cv} for some compactness results in dimensions $n=3,4,5$.  In case of a negative Yamabe invariant flow convergence was proven in \cite{Amacha_Regbaoui} recently.  
 \end{enumerate}

\noindent
In order to introduce the relevant notions,  consider a smooth, closed Riemannian manifold 
$$
M=(M^{n},g_{0}), \; n=3,4,5  
$$
with volume measure $\mu_{g_{0}}$ and scalar curvature $R_{g_{0}}$.
The Yamabe invariant 
\begin{equation*}\begin{split}
Y(M)
= &
\inf_{\mathcal{A}}
\frac
{\int c_{n}\vert \nabla u \vert_{g_{0}}^{2}+R_{g_{0}}u^{2}d\mu_{g_{0}}}
{(\int u^{\frac{2n}{n-2}}d\mu_{g_{0}})^{\frac{n-2}{n}}}
\; \text{ with }\;
c_{n}=4\frac{n-1}{n-2},
\end{split}\end{equation*}
where 
$$
\mathcal{A}=
\{
u\in W^{1,2}_{g_{0}}(M)\;:\; u\geq 0,u\not \equiv 0
\},
$$ 
is assumed to be positive. Then the  conformal Laplacian
$$
L_{g_{0}}=-c_{n}\Delta _{g_{0}}+R_{g_{0}}
$$
is  a positive,  selfadjoint operator with Green's function
$
G_{g_{0}}.
$ 
We may assume 
$$
R_{g_{0}}>0 
\; \text{ and } \;
\int Kd\mu_{g_{0}}=1
$$
for the background metric $g_{0}$.
For a conformal metric 
$$g=g_{u}=u^{\frac{4}{n-2}}g_{0}$$ 
there holds 
$
d\mu=d\mu_{g_{u}}=u^{\frac{2n}{n-2}}d\mu_{g_{0}}
$
for the volume element and 
\begin{equation*}\begin{split}
R=R_{g_{u}}=u^{-\frac{n+2}{n-2}}(-c_{n} \Delta _{g_{0}} u+R_{g_{0}}u)
=
u^{-\frac{n+2}{n-2}}L_{g_{0}}u
\end{split}\end{equation*} 
for the scalar curvature. We may define 
$$\Vert u \Vert^{2}=\int L_{g_{0}}uud\mu_{g_{0}}$$
and use $\Vert \cdot \Vert$ as an equivalent norm on $W^{1,2}(M)$.
Let $0<K\in C^{\infty}(M)$ and 
$$
r=r_{u}=\int Rd\mu, \,k=k_{u}=\int Kd\mu, \,
\K=\K_{u}
=
\frac{K}{k}.
$$
In \cite{may-cv} we have studied the $L^{2}$-pseudo gradient flow
\begin{equation}\begin{split}\label{Yamabe_flow}
\partial_{t}u
=
-(\frac{R}{K}-\frac{r}{k})u
\; \text{ on } \; 
X=\{u\in C^{\infty}(M,\R_{+}) \;:\; k =1 \}, 
\end{split}\end{equation}
which   evidently coincides with the Yamabe flow in case $K=1$. Obviously
$
\partial_{t}k=0,
$
i.e. 
the unit volume $k\equiv 1$ is preserved.  Let us consider the scaling invariant energy
\begin{equation}\label{functional}
\begin{split}
J(u)
= &
\frac{\int c_{n}\vert \nabla u \vert_{g_{0}}^{2}+R_{g_{0}}u^{2}d\mu_{g_{0}}}{(\int 
Ku^{\frac{2n}{n-2}}d\mu_{g_{0}})^{\frac{n-2}{n}}}
=\frac{\int L_{g_{0}}uud\mu_{g_{0}}}{(\int Ku^{\frac{2n}{n-2}}d\mu_{g_{0}})^{\frac{n-2}{n}}} 
\; \text{ for }\;  u\in \mathcal{A},
\end{split}
\end{equation} 
omitting from now on $d\mu_{g_{0}}$, when integrating with 
respect to it. 
\begin{proposition}\label{prop_derivatives_of_J}
We have 
$J(u)
=
\frac{r}{k^{\frac{n-2}{n}}}$ and
\begin{enumerate}[label=(\roman*)]
  \Item \quad  
\begin{equation*}\begin{split}
\frac{1}{2}\partial  J(u)v
= &
\frac{1}{k^{\frac{n-2}{n}}}
[
\int L_{g_{0}}uv
-
\frac
{
r
}
{
k
}
\int Ku^{\frac{n+2}{n-2}}v
]
= 
\frac{1}{k^{\frac{n-2}{n}}}\int (R-\frac{r}{k}K)u^{\frac{n+2}{n-2}}v
\end{split}\end{equation*}
 \Item \quad  
\begin{equation*}\begin{split}
\frac{1}{2}\partial^{2}  J(u)vw 
=&
\frac{1}{k^{\frac{n-2}{n}}}
[
\int L_{g_{0}}vw
-
\frac{n+2}{n-2}
\frac
{
r
}
{
k
} 
\int Ku^{\frac{4}{n-2}}vw
]
\\ &-
\frac{2}{k^{\frac{n-2}{n}+1}}
[
\int L_{g_{0}}uv\int Ku^{\frac{n+2}{n-2}}w
+
\int L_{g_{0}}uw\int Ku^{\frac{n+2}{n-2}}v
]\\
& +
4\frac{n-1}{n-2}\frac{r}{k^{\frac{n-2}{n}+2}}
\int Ku^{\frac{n+2}{n-2}}v\int Ku^{\frac{n+2}{n-2}}w.
\end{split}\end{equation*}
\end{enumerate}
Moreover $J$ is $C^{2, \alpha}_{loc}$ and uniformly H\"older continuous on each 
$$
U_{\varepsilon}=\{u\in \mathcal{A}\;:\;  \varepsilon<\Vert u \Vert,\,J(u)\leq \varepsilon^{-1}\}\subset \mathcal{A}.
$$
\end{proposition}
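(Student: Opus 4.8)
The plan is to write $J=N\cdot D^{-\frac{n-2}{n}}$ with numerator $N(u)=\int L_{g_{0}}uu=\Vert u\Vert^{2}$ and denominator $D(u)=\int Ku^{\frac{2n}{n-2}}$, and to deduce every assertion by elementary calculus on $W^{1,2}(M)$ together with the Sobolev embedding $W^{1,2}(M)\hookrightarrow L^{\frac{2n}{n-2}}(M)$. The identity $J(u)=r\,k^{-\frac{n-2}{n}}$ is immediate: inserting $d\mu=u^{\frac{2n}{n-2}}d\mu_{g_{0}}$ and $R=u^{-\frac{n+2}{n-2}}L_{g_{0}}u$ into $r=\int R\,d\mu$ makes the exponents cancel, so $r=\int L_{g_{0}}uu=N(u)$, while $k=\int K\,d\mu=\int Ku^{\frac{2n}{n-2}}=D(u)$; dividing gives the claim.

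For the derivatives I would simply differentiate the quotient. Since $N$ is a bounded symmetric bilinear form, $\partial N(u)v=2\int L_{g_{0}}uv$ and $\partial^{2}N(u)vw=2\int L_{g_{0}}vw$; the power rule gives $\partial D(u)v=\frac{2n}{n-2}\int Ku^{\frac{n+2}{n-2}}v$ and $\partial^{2}D(u)vw=\frac{2n(n+2)}{(n-2)^{2}}\int Ku^{\frac{4}{n-2}}vw$. Each of these integrals defines a bounded multilinear form because, after H\"older, the Sobolev exponent $\frac{n-2}{2n}$ attached to each test function plus the one carried by the power of $u$ add up to exactly $1$ — the critical-exponent bookkeeping that recurs at every step. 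Applying the Leibniz and chain rules to $N\cdot D^{-\frac{n-2}{n}}$, then substituting $N(u)=r$, $D(u)=k$ and, for the second form in (i), the pointwise identity $L_{g_{0}}u=Ru^{\frac{n+2}{n-2}}$, reproduces the two displayed expressions, the final coefficient being $c_{n}=4\frac{n-1}{n-2}$. This step is routine, if slightly lengthy, quotient-rule computation.

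For regularity, $N$ is smooth, and $D\in C^{2}(W^{1,2})$ with the derivatives above; moreover $\partial^{2}D$ is locally Lipschitz for $n=3,4,5$ since $|t^{\frac{4}{n-2}}-s^{\frac{4}{n-2}}|\le c\,(|t|+|s|)^{\frac{4}{n-2}-1}|t-s|$ (the exponent being $\ge\frac{4}{3}$) and the H\"older exponents once more sum to $1$, so that $\Vert\partial^{2}D(u)-\partial^{2}D(u')\Vert\le c\,(\Vert u\Vert+\Vert u'\Vert)^{\frac{4}{n-2}-1}\Vert u-u'\Vert$. As $D(u)>0$ whenever $u\neq 0$ (because $K>0$) and $t\mapsto t^{-\frac{n-2}{n}}$ is smooth on $(0,\infty)$, the functional $J=N\cdot D^{-\frac{n-2}{n}}$ is $C^{2,\alpha}_{loc}$ near every $u\in\mathcal{A}$, as a product and composition of such maps. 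For the uniform H\"older bound on $U_{\varepsilon}$, the two defining constraints yield $D(u)=(\Vert u\Vert^{2}/J(u))^{\frac{n}{n-2}}\ge\varepsilon^{\frac{3n}{n-2}}$, so every negative power of $D$ produced by the quotient rule is uniformly bounded on $U_{\varepsilon}$; combining this lower bound with the scaling invariance $J(\lambda u)=J(u)$ (which lets one renormalize to $\Vert u\Vert=1$) and the Sobolev inequality turns the local estimates into uniform ones, with constant depending only on $\varepsilon$, $\Vert K\Vert_{C^{1}}$, and the Sobolev and Green's-function constants of $(M,g_{0})$.

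I expect the only genuine difficulty to be precisely this last point: promoting the local H\"older control of $\partial^{2}J$ to a bound uniform over the \emph{unbounded} set $U_{\varepsilon}$. This is what forces one to exploit the $0$-homogeneity of $J$ and to track carefully how the lower bound on $\Vert u\Vert$, hence on $D(u)$, dominates the negative powers of $D$ that the quotient rule generates; everything else is bounded multilinear algebra plus the elementary inequalities for powers.
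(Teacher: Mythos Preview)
The paper states this proposition without proof; it is treated as a routine computation (the formulas are simply quoted and then used). Your approach is the natural one and is correct: write $J=N\,D^{-(n-2)/n}$, differentiate the quotient, and read off the constants. The identification $r=N(u)$, $k=D(u)$ and the pointwise substitution $L_{g_{0}}u=Ru^{\frac{n+2}{n-2}}$ are exactly what is needed for the alternate form in (i).

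Two small comments on the regularity part. First, your local Lipschitz estimate for $\partial^{2}D$ does go through for $n=3,4,5$ precisely because $\tfrac{4}{n-2}\ge 1$ in those dimensions, and the H\"older bookkeeping you describe is correct (all exponents land at the critical Sobolev scale). For $n\ge 6$ one would only get $C^{2,\frac{4}{n-2}}_{loc}$, which is why the paper restricts to low dimensions. Second, regarding the uniform estimate on $U_{\varepsilon}$: your scaling argument needs a little care, since $\partial^{2}J$ is not scale-invariant but rather $\partial^{2}J(\lambda u)=\lambda^{-2}\partial^{2}J(u)$. What the paper actually uses later (see the argument around \eqref{flow_palais_smale_gradient}) is the uniform boundedness of $|\partial J|$ and $|\partial^{2}J|$ along flow lines, i.e.\ on sets where additionally $k=1$ and hence $\Vert u\Vert^{2}=J(u)\le\varepsilon^{-1}$ is bounded above as well; on such sets your lower bound $D(u)\ge\varepsilon^{3n/(n-2)}$ together with the upper bound on $\Vert u\Vert$ gives the uniform control directly, and no scaling trick is needed. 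Your instinct that this is the only point requiring attention is right.
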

In particular  the problem of conformally  prescribing the scalar curvature is variational and 
\begin{equation*}\begin{split}  
\frac{1}{2}\vert \partial J(u)\vert
\leq \frac{1}{k^{\frac{n-2}{n}}}\Vert R-r\K \Vert_{L^{\frac{2n}{n+2}}_{\mu}}
\leq  \frac{1}{k^{\frac{n-2}{n}}}\Vert R-r\K \Vert_{L^{2}_{\mu}},
\end{split}\end{equation*}
where 
$
\vert \partial J(u)\vert
=
\vert \partial J(u)\vert_{W^{-1,2}_{g_{0}}(M)}. 
$
Then  by a slight abuse of notation we define 
$$
\vert \delta J\vert(u) =2k^{\frac{2-n}{n}}\Vert R-r\K\Vert_{L^{2}_{\mu}} 
$$
as a natural majorant of $\vert \partial J(u)\vert$ and along a flow line we have
\begin{equation*}
\begin{split}
\partial_{t} J(u)
\lesssim
-\vert \delta J(u)\vert^{2}
.
\end{split}
\end{equation*}
From Theorem 1 in \cite{may-cv} we know at least in cases $n=3,4,5$, that every  flow line for
\eqref{Yamabe_flow}
 exists positively for all times. Consequently  we have a priori  
$$
\int^{\infty}_{0}\vert \delta J(u)\vert^{2} dt<\infty,
$$
as  $J$ by positivity of the Yamabe invariant is lower bounded. Similarly  we may consider the  gradient flow
\begin{equation*} 
\partial_{t}u=-\nabla J(u),\; \nabla =\nabla^{L_{g_{0}}}, 
\end{equation*}
for which $\partial_{t}\Vert u \Vert=0$ instead of $\partial_{t}k=0$.  This describes a strong gradient flow, 
since  by definition
\begin{equation*}
\forall\; w \in W^{1,2}(M)\;:\;\langle \nabla J(u),w \rangle_{L_{g_{0}}}
=
\partial J(u)w
\; \text{ and }\; \Vert \cdot \Vert_{L_{g_{0}}}\simeq \Vert \cdot \Vert_{W^{1,2}},
\end{equation*}
and we write $\nabla J(u)=L_{g_{0}}^{-}\partial J(u)$. 
For the sake of easy comparability to \eqref{Yamabe_flow} consider
\begin{equation}\label{gradient_flow}
\partial_{t}u=-\frac{r}{2k}(\nabla J(u)-\frac{\int Ku^{\frac{n+2}{n-2}}\nabla J(u)}{k}u)
\end{equation}
as a strong pseudo gradient flow. Then $\partial_{t}k=0$ and, since   by scaling invariance we have 
$\partial J(u)u=0$,  there holds  under \eqref{gradient_flow} on $X$
\begin{equation*}
\partial_{t} J(u)=-\frac{r}{2k}\Vert  \nabla  J(u)\Vert^{2}=-\frac{r}{2}\vert \partial J(u)\vert^{2}.
\end{equation*}
In particular  and  by positivity of the Yamabe invariant we have along each flow line 
\begin{equation}\label{energy_bounds}
c(K)\leq J(u)=r_{u}=r=\int L_{g_{0}}uu=\Vert u \Vert^{2}\leq J(u_{0}). 
\end{equation}
Then, since 
\begin{equation}\label{gradient_positive}
\nabla J(u)
= 
L_{g_{0}}^{-}\partial J(u)
=
k^{\frac{2-n}{n}}  L_{g_{0}}^{-}(L_{g_{0}}u-r\bar Ku^{\frac{n+2}{n-2}})
\leq
\frac{u}{k^{\frac{n-2}{n}}}
\end{equation}   
by positivity of $L_{g_{0}}^{-}=G_{g_{0}}$, we find under \eqref{gradient_flow}
$$
\partial_{t} u \geq 
-
C(1+\vert \partial J(u)\vert)u, 
$$
so $u>0$ is preserved. Indeed due to  $k=1$ and \eqref{energy_bounds} we find from Proposition \ref{prop_derivatives_of_J}, that $\vert \partial J(u)\vert$ is a priori bounded along flow lines. Therefore  each flow line exists positively for all times and   
$$\partial_{t}J(u)\simeq -\vert \partial J(u)\vert^{2},$$
whence 
\begin{equation*}  
\int^{\infty}_{0}\Vert \nabla J(u)\Vert^{2}=\int^{\infty}_{0}\vert \partial  J(u)\vert^{2}<\infty.
\end{equation*}
We thus see, that \eqref{gradient_flow} defines a pseudo gradient flow on $X$ as well. Note, that \eqref{gradient_flow} falls into the class of ordinary differential equations, hence long time existence is a non issue in contrast to the  $L^{2}$- type flow \eqref{Yamabe_flow}.  
The difference, when considering \eqref{Yamabe_flow} in contrast to \eqref{gradient_flow} apart from the distinguishing quadratic a priori integrability of $\vert \delta J\vert$ versus $\vert \partial J\vert$ lies in the ease of adaptability. In fact considering a bounded and for instance smooth  vectorfield $W$ on $X$ satisfying 
$
\langle \nabla J,W\rangle \geq 0
$
we may modify \eqref{gradient_flow} to 
\begin{equation}\label{gradient_flow_modified}
\partial_{t}u=-\frac{r}{2k}(\nabla J(u)+W-\frac{\int Ku^{\frac{n+2}{n-2}}(\nabla J(u)+W)}{k}u),
\end{equation}
as we shall do in Section \ref{sec_modifying the gradient flow}. 
We then still decrease energy, find quadratic a priori integrability of $\vert \partial J\vert$,  preserve 
$\partial_{t}k=0$ and  $u>0$ and finally also \eqref{gradient_flow_modified} falls into the class of ordinary differential equations, hence also \eqref{gradient_flow_modified} defines  a flow on $X$. In contrast the long time existence of \eqref{Yamabe_flow} relies on higher order integrability properties of 
$R-r\bar K$, cf. \cite{BrendleArbitraryEnergies},\cite{may-cv}, which may be destroyed by even slight adaptations.

\

In any case, i.e. \eqref{Yamabe_flow},\eqref{gradient_flow} or \eqref{gradient_flow_modified}, the volume $k=1$  is preserved  and the lower bounded energy $J$ decreased, 
whence along a  flow line $u$ 
\begin{equation*} 
\int L_{g_{0}}uu=r=k^{\frac{n-2}{n}}J(u)=J(u)<J(u)\lfloor_{t=0}<\infty,
\end{equation*}
i.e. we have norm control along each flow line. Moreover  under \eqref{Yamabe_flow} there holds
\begin{equation}\label{flow_palais_smale_yamabe_flow}
\vert \partial J(u)\vert \lesssim \vert \delta J(u)\vert \longrightarrow 0 \; \text{ as }\; t \longrightarrow  \infty,
\end{equation} 
cf. Proposition  \ref{I-prop_strong_convergence_of_the_first_variation} in \cite{may-cv}. Likewise there holds under \eqref{gradient_flow_modified}
\begin{equation*}
\vert \partial J(u)\vert=\Vert \nabla J(u)\Vert \longrightarrow 0 \; \text{ as }\; t \longrightarrow \infty.
\end{equation*}
Indeed $\int^{\infty}_{0}\vert \partial J(u)\vert^{2}<\infty$ necessitates
\begin{equation*}
\vert \partial J(u_{t_{k}})\vert \longrightarrow 0
\; \text{ and }
\int^{\infty}_{t_{k}}\vert \partial J(u)\vert^{2} \longrightarrow 0
\; \text{ as }\; k \longrightarrow \infty
\end{equation*}
for a least a sequence $t_{k}\longrightarrow \infty$ as $k\longrightarrow \infty$ in time and thus for any $t>t_{k}$
\begin{equation}\label{flow_palais_smale_gradient} 
\vert \partial J(u_{t})\vert^{4}
\leq 
\vert \partial J(u_{t_{k}})\vert^{4}
+
C\int^{\infty}_{t_{k}} \vert \partial J(u)\vert^{2} \longrightarrow 0 \; \text{ as }\; k \longrightarrow \infty
\end{equation}
using a priori uniform boundedness of $\vert \partial J(u)\vert$ and $\vert \partial^{2}J(u)\vert$, cf. 
Proposition \ref{prop_derivatives_of_J}, along flow lines.

\   

Based on a fine description of a possible lack of compactness of flow lines, we had extracted suitable  assumptions to guarantee compactness of the flow on $X$ induced by \eqref{Yamabe_flow}, cf. Theorem 2 from \cite{may-cv}. For instance for $n=5$ under   
\textit{
\begin{enumerate}
\centering 
\item [\quad Cond$_{5}$:]\;\;\; $M$ is not conformally equivalent to the standard sphere $\mathbb{S}^{5}$ and
\begin{equation*}
 \langle \nabla \Delta  K, \nabla K\rangle 
>\frac{1}{3} \vert \Delta  K \vert^{2}
\end{equation*}
 on $\{\Delta  K<0\}\cap U$ for an open neighbourhood $U$ of 
$\{\nabla K=0\}$ 
\end{enumerate}   
}
\noindent 
every flow line for \eqref{Yamabe_flow} is compact and hence converges to a solution of $\partial J=0$ in $X$.  We will restrict our attention to the very simple scenario
\begin{Condition}\label{Condition_on_K}
Let $n=5$ and  
\begin{enumerate}[label=(\roman*)]
\item \quad $M\not \simeq \mathbb{S}^{5}$ conformally
 \item\quad $\exists \; x_{0}\in M\;:\; \{x_{0}\}=\{K=\max_{M}K\}$
 \item \quad$\Delta K >0$ on $\{x_{1},\ldots,x_{q}\}=\{\nabla K=0\} \setminus \{x_{0}\}$
 \item \quad in a conformal normal coordinate system around $x_{0}\simeq 0$ we have
$$
K(x)=1-\vert x \vert^{4},\; \text{ where }\;  \vert x \vert=(\sum_{i}x_{i}^{2})^{\frac{1}{2}}.
$$
\end{enumerate}
\end{Condition}   
\noindent
We refer to \cite{LeeAndParker} and \cite{Guenter} for the notion of conformal normal coordintates. 
Also note, that we  only slightly violate $Cond_{5}$, since indeed  close to $x_{0}$ we have 
\begin{equation*}
 \langle\nabla \Delta K, \nabla K\rangle=\frac{2}{n+2}\vert \Delta K\vert^{2}
 <\frac{1}{3}\vert \Delta K\vert^{2},
\end{equation*} 
in particular $Cond_{5}$ from \cite{may-cv} guaranteeing flow convergence is pretty sharp. 
As a consequence the only possible non compactness, i.e. non compact  flow lines for \eqref{Yamabe_flow} or \eqref{gradient_flow}, correspond to a bubbling close to $x_{0}$ with critical energy
\begin{equation*}
J_{\infty}=J(\varphi_{x_{0},\infty})
=\frac{c_{0}}{K^{\frac{n-2}{n}}(x_{0})}.
\end{equation*} 
This unique  bubbling then occurs both for \eqref{Yamabe_flow} and  \eqref{gradient_flow}
and we will compare these flows in detail. However by a slight modification of the latter flow in the spirit of \eqref{gradient_flow_modified} this non compactness will be completely removed. 
\begin{thm}
Let $M=(M^{n},g_{0})$ be a Riemannian manifold of dimension $n=5$ and positive Yamabe invariant.  
Then under  Condition \ref{Condition_on_K}   the flows generated by 
\begin{enumerate}[label=(\roman*)]
 \item \quad the Yamabe type, $L^{2}$-gradient flow \eqref{Yamabe_flow} and 
 \item \quad its normalised,  strong gradient type analogon \eqref{gradient_flow}
\end{enumerate}
for the prescribed scalar curvature functional \eqref{functional} exhibit exclusively non compact flow lines of single bubble type at the unique maximum of $K$, while there exists a compact pseudo gradient  for the latter functional, i.e. a pseudo gradient, all of whose flow lines are compact and hence converging. 
\end{thm}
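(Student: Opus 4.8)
\emph{Proof strategy.} I would establish the three assertions --- non compactness of \eqref{Yamabe_flow}, non compactness of \eqref{gradient_flow}, and existence of a compact pseudo gradient --- by a finite dimensional reduction near the one bubble set, a sign analysis of the induced motion of the concentration parameter $\lambda$, and an explicit cancelling modification of \eqref{gradient_flow}. For the localisation: along each of \eqref{Yamabe_flow}, \eqref{gradient_flow} the energy $J$ is non increasing, bounded below by \eqref{energy_bounds}, and $|\partial J(u)|\to 0$ by \eqref{flow_palais_smale_yamabe_flow}, respectively \eqref{flow_palais_smale_gradient}; so a non compact flow line is a Palais--Smale flow line. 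By the classification of the lack of compactness of \eqref{functional} for $n=5$ in \cite{may-cv} (see also \cite{MM3}), recalled in the introduction, Condition \ref{Condition_on_K} forces any such flow line, after some time, into a neighbourhood $V(1,\varepsilon)$ of the set of single standard bubbles, concentrating at $x_{0}$ and nowhere else. It thus suffices to study the flows on the slice of $V(1,\varepsilon)$ over $x_{0}$, parametrised by $(\alpha,a,\lambda,v)$ with $a$ near $x_{0}$, $\lambda$ large, $\|v\|$ small and $k=1$.

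\emph{The $\lambda$-dynamics and existence of the non compact flow lines.} Using Proposition \ref{prop_derivatives_of_J} and the conformal normal coordinates of Condition \ref{Condition_on_K}(iv) one expands $J$ on this slice. Because $K=1-|x|^{4}$, the usual $\lambda^{-2}$-contribution proportional to $\Delta K(x_{0})$ vanishes, so the leading $\lambda$-dependence combines the mass term of the Green's function $G_{g_{0}}$, of order $\lambda^{-(n-2)}=\lambda^{-3}$, with the quartic term of order $\lambda^{-4}$, a term in $\nabla K(a)$ reappearing off $x_{0}$. The decisive point is the borderline identity $\langle\nabla\Delta K,\nabla K\rangle=\frac{2}{n+2}|\Delta K|^{2}<\frac13|\Delta K|^{2}$ noted after Condition \ref{Condition_on_K}: it says precisely that the quantity governing $\partial_{t}\lambda$ along the flow --- a fixed linear combination of $\partial_{\lambda}J$ and $\lambda\langle\nabla_{a}J,\nabla K(a)/|\nabla K(a)|\rangle$ on the bubble, after projecting out $v$ and the normalisation terms of the flow --- has, under Condition \ref{Condition_on_K}, the opposite sign to the one which in Theorem~2 of \cite{may-cv} (under $Cond_{5}$) forced compactness, and hence drives $\lambda$ upward; meanwhile the factor $K(a)^{-(n-2)/n}$ in $J$, together with $x_{0}$ being the strict maximum of $K$, makes $\nabla_{a}J$ attract $a$ to $x_{0}$. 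Consequently, starting from $u_{0}=\alpha_{0}\varphi_{x_{0},\lambda_{0}}$ with $\lambda_{0}$ large, the flow line stays in $V(1,\varepsilon)$ with $a\to x_{0}$ and $\lambda$ non decreasing, so $\lambda\to\infty$: this produces, for both \eqref{Yamabe_flow} and \eqref{gradient_flow}, a non compact flow line, of single bubble type at $x_{0}$ by the localisation step, with $J\to J_{\infty}=c_{0}K(x_{0})^{-(n-2)/n}$. For \eqref{Yamabe_flow} one additionally invokes the a priori long time existence from Theorem~1 of \cite{may-cv}; for \eqref{gradient_flow}, an ordinary differential equation, the projections of $\nabla J=L_{g_{0}}^{-}\partial J$ onto the bubble parameters are directly the partial derivatives of the reduced functional, which shortens the bookkeeping.

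\emph{A compact pseudo gradient.} By the previous step the only candidate critical point at infinity of \eqref{functional} under Condition \ref{Condition_on_K} is $(x_{0},\infty)$, and the expansion shows that on the slice over $x_{0}$ the reduced functional is, for $\lambda$ large, a strictly monotone ramp in $\lambda$ with no critical value above $J_{\infty}$; so this end carries no local topology and can be flowed past. I would then fix a bounded smooth vector field $W$ on $X$, supported in the part of $V(1,\varepsilon)$ over $x_{0}$ with $\lambda$ large, with $\langle\nabla J,W\rangle\geq 0$ everywhere, and whose projection onto the bubble parameters cancels the outward $\lambda$-drift above --- for instance $W$ pushes $a$ off $x_{0}$ into the region where $\nabla K(a)\neq 0$ restores the compact sign, $\langle\nabla J,W\rangle\geq 0$ being kept by construction. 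Inserting $W$ into \eqref{gradient_flow_modified}, as done in Section \ref{sec_modifying the gradient flow}, the modified flow still decreases $J$, preserves $k\equiv 1$ and $u>0$, stays a globally defined ordinary differential equation on $X$, and satisfies $\int_{0}^{\infty}|\partial J(u)|^{2}<\infty$ and hence $|\partial J(u)|\to 0$ exactly as in \eqref{flow_palais_smale_gradient}. Every flow line of \eqref{gradient_flow_modified} is therefore again Palais--Smale; by the localisation step it cannot accumulate at any non compact configuration other than $(x_{0},\infty)$, and the cancelling $W$ prevents accumulation there as well, so every such flow line is compact and converges to a solution of $\partial J=0$ in $X$, proving the last assertion.

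\emph{Main obstacle.} The crux is the refined expansion and sign computation of the second step: one must carry the expansion of $J$ on $V(1,\varepsilon)$ near $x_{0}$ to order $\lambda^{-4}$, including the $\nabla K(a)$- and $\Delta K(a)$-terms and the feedback of the optimal $v$ and of the normalisation terms of \eqref{Yamabe_flow}, \eqref{gradient_flow}, and verify that the resulting combined $(\lambda,a)$-vector field is, precisely because $K=1-|x|^{4}$ and $\frac{2}{n+2}<\frac13$ for $n=5$, of blow up type rather than of the compact type of \cite{may-cv}; this is exactly what separates this degenerate example from the generic situation there. Once this is understood, the construction of $W$ is comparatively soft, though one still has to check that $W$ can be chosen globally on $X$ with $\langle\nabla J,W\rangle\geq 0$, so that all conservation and long time properties of \eqref{gradient_flow_modified} persist.
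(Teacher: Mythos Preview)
Your overall architecture matches the paper's: localise to $V(1,\varepsilon)$ near $x_0$, read off the $(a,\lambda)$-dynamics, then modify by a supported $W$. But two of the concrete mechanisms are backwards. For the divergent flow line, the initial data $u_0=\alpha_0\varphi_{x_0,\lambda_0}$ does \emph{not} blow up: at $a=x_0$ one has $\nabla K=\Delta K=\nabla\Delta K=0$, so the shadow flow (Proposition~\ref{prop_the_shadow_flow}) reduces to $-\dot\lambda/\lambda=\gamma_1 H(x_0)\lambda^{-3}(1+o(1))>0$ by the positive mass theorem, and $\lambda$ \emph{decreases}. The paper (Lemma~\ref{lem_diverging}) takes $a_0$ near but off $x_0$ with the scale-invariant quantity $\lambda_0|a_0|^2$ large; the borderline inequality $\tfrac{2}{n+2}<\tfrac13$ enters not as a sign of $\partial_t\lambda$ itself but as the sign of $\partial_t\bigl(\lambda\Delta K(a)\bigr)$, guaranteeing that $\lambda|a|^2$ stays large so the Laplacian term $|a|^2/\lambda^2$ persistently dominates the mass term $1/\lambda^3$.

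For the compactifying $W$, pushing $a$ \emph{off} $x_0$ is incompatible with $\langle\nabla J,W\rangle\geq 0$. From Proposition~\ref{prop_simplifying_ski} one has $\partial J(u)\tfrac{\nabla_a}{\lambda}\varphi_{a,\lambda}=-2\sigma_3\sim -c\,\nabla K(a)/\lambda$, so a $W$ moving $a$ against $\nabla K$ gives $\partial J(u)W\sim -\varepsilon|\nabla K(a)|/\lambda^2<0$; since $|\nabla K(a)|=4|a|^3\ll\varepsilon$ on the relevant support, this negative term is not absorbed by $|\partial J|^2$ and the energy fails to decrease. The paper's $W$ in \eqref{W_definition} does the opposite, accelerating $a$ \emph{toward} $x_0$; this drives $\lambda|a|^2$ down until the mass term wins and $\lambda$ cannot diverge (Section~\ref{sec_excluding_diverging_flow_lines}). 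Finally, the localisation to a single bubble at $x_0$ is not a citation of \cite{may-cv}, since $Cond_5$ fails here; ruling out $\omega>0$, other blow-up points, and tower bubbling requires the separate arguments of Lemmata~\ref{lem_no_mixed_type}--\ref{lem_no_tower_bubbling_at_x_0}.
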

\begin{proof}
We have seen above, that \eqref{Yamabe_flow} and \eqref{gradient_flow} induce a flow $\Phi$ on $X$, whose flow lines 
$$u=u_{t}=\Phi(t,u_{0})$$ up to a time sequence are Palais-Smale. 
Then up to a subsequence in time   
$$\forall\; \varepsilon>0 \;\exists\; N=N(\varepsilon)\in \N\;\forall\; n\geq N\;:\; u_{t_{n}}\in V(\omega,p,\varepsilon)$$
for 
\begin{enumerate}[label=(\roman*)]
 \item \quad  either $\omega=0$ and $p\in \N_{\geq 1}$
 \item \quad or  a solution $\omega>0$ to $\partial J(\omega)=0$ and $p\in \N_{\geq 0}$,  
\end{enumerate}
 c.f Definition \ref{def_V(omega,p,e)} and Proposition \ref{prop_concentration_compactness}.
 In fact $\omega=0$ and $p=0$ would imply 
 $$u_{t_{n}}\xrightarrow{n\to \infty}0\; \text{ strongly}$$  contradicting the normalisation $k=k_{u}=1$. 
 The latter statement is sharpened via Proposition \ref{prop_unicity_of_a_limiting_critical_point_at_infinity} to 
\begin{equation*}
\forall\; \varepsilon>0\; \exists\ T=T(\varepsilon)>0\; \forall t\geq T\;:\; u=u_{t}\in V(\omega,p,\varepsilon). 
\end{equation*}
Hence convergence in case $p=0$. By Section \ref{sec_compact_regions} only $p=1$ is possible  in case $p>0$ and then
$$a\xrightarrow{t\to \infty} x_{0}=\{K=\max K\}$$ for the single blow-up point $a$ of 
$$u=\alpha \varphi_{a,\lambda}+v\in V(p,\varepsilon)=V(0,p,\varepsilon).$$ 
Lemma \ref{lem_diverging} then shows, that indeed
$ \lambda\longrightarrow \infty$
for suitable initial data. Hence we have proven the exclusive existence of non compact flow lines as a single bubbling at $x_{0}$.

Finally for the modified flow on $X$ induced by \eqref{gradient_flow_modified_discussion}, which is a pseudo gradient flow by virtue of Lemma \ref{lem_modified_flow_energy_consumption}, the only possibility for a non compact flow line is as before a single bubbling scenario, cf. \eqref{concentration_compactness_for_modified_flow}, which is ruled out in Section \ref{sec_excluding_diverging_flow_lines}. Hence \eqref{gradient_flow_modified_discussion}
induces a compact flow. 
\end{proof}

  The plan of this work is as follows. In Section \ref{sec_preliminaries} we recall some preliminary notions already introduced in \cite{may-cv} for the study of such flows. In particular in Section \ref{sec_shadow_flow} we study the difference or rather the strict similarities of the shadow flow for \eqref{Yamabe_flow} and \eqref{gradient_flow}, i.e. the dynamics of those variables relevant to the underlying finite dimensional reduction. Subsequently we recall  in Section \ref{sec_principal_behaviour} some first and easy properties on flow lines based on this reduction. 
After this  lengthy exposition of introduction and preliminary results in Sections \ref{sec_introduction} and \ref{sec_preliminaries} we study in Section \ref{sec_divergence_and_compactification}  all possibilities of non compact flow lines for the flows induced by \eqref{Yamabe_flow} and \eqref{gradient_flow} and afterwards of a slight modification of the latter. 
Precisely we exclude in Section \ref{sec_compact_regions} all possibilities for non compact flow lines for \eqref{Yamabe_flow} and \eqref{gradient_flow}, which are not of single bubble type and concentrating at the maximum point of $K$. Subsequently in Section  \ref{sec_diverging_flow_lines} we show, that the latter remaining possibility is realised, i.e. that in fact such non compact flow lines exist for both flows. 
Finally we modify the latter flows in Section \ref{sec_modifying the gradient flow} and thus introducing a new  pseudo gradient flow, which in Section \ref{sec_excluding_diverging_flow_lines} is shown to be compact. 
Last and for the sake of readability we collect in the Appendix \ref{sec_appendix}  some statements from \cite{may-cv} and a proof from Section \ref{sec_preliminaries}.

\section{Preliminaries}\label{sec_preliminaries}  

As we had seen via \eqref{flow_palais_smale_yamabe_flow} and \eqref{flow_palais_smale_gradient}, every flow line for \eqref{Yamabe_flow} and \eqref{gradient_flow_modified}  up to the choice of a time sequence constitutes a Palais-Smale sequence for $J$, whose   possible lack of compactness we now describe.   

\begin{definition}
 
For $a\in M$ let $u_{a}$ via $g_{a}=u_{a}^{\frac{4}{n-2}}g_{0}$ introduce conformal  normal coordinates and let  $G_{g_{ a }}$ be the Green's function of the conformal Laplacian $L_{g_{a}}$.
For $\lambda>0$ let
\begin{equation*}\begin{split}
\varphi_{a, \lambda }
= &
u_{ a }(\frac{\lambda}{1+\lambda^{2} \gamma_{n}G^{\frac{2}{2-n}}_{ a }})^{\frac{n-2}{2}},
\;G_{ a }=G_{g_{ a }}( a, \cdot), \;
\gamma_{n}=(4n(n-1)\omega _{n})^{\frac{2}{2-n}}.
\end{split}\end{equation*}
One may expand 
$
G_{ a }=\frac{1}{4n(n-1)\omega _{n}}(r^{2-n}_{a}+H_{ a })
$
with
$
r_{a}=d_{g_{a}}(a, \cdot)
$
and decompose
\begin{equation*}
\begin{split}
H_{ a }=H_{r,a }+H_{s, a },\;H_{r,a }\in C^{2, \alpha}_{loc},\;
H_{s,a}
=
O
\begin{pmatrix}
0 & \text{ for }\, n=3\\ r_{a}^{2}\ln r_{a} & \text{ for }\, n=4 \\ r_{a}& \text{ for }\, n=5 
\end{pmatrix}.
\end{split}
\end{equation*} 
In addition 
the positive  mass theorem tells, that $H_{a}(a)\geq 0 $ for all $a\in M$ and 
\begin{equation*}
H_{a}(a)=0 
\;\text{ for }\; 
M\simeq \mathbb{S}^n,
\text{ while }\; 
H_{a}(a)>0
\; \text{ for }\; 
M \not \simeq \mathbb{S}^n
\end{equation*}
in the sense of  conformal equivalence. 
\end{definition}  

We abbreviate some notation. 

\begin{definition}\label{def_relevant_quantities}
For $k,l=1,2,3$ and $ \lambda_{i} >0, \, a _{i}\in M, \,i= 1, \ldots,p$ define
\begin{enumerate}[label=(\roman*)]
  \item \quad  
$\var_{i}=\var_{a_{i}, \lambda_{i}}$ and $(d_{1,i},d_{2,i},d_{3,i})=(1,-\lambda_{i}\partial_{\lambda_{i}}, \frac{1}{\lambda_{i}}\nabla_{a_{i}})$
  \item \quad 
$\phi_{1,i}=\varphi_{i}, \;\phi_{2,i}=-\lambda_{i} \partial_{\lambda_{i}}\varphi_{i}, \;\phi_{3,i}= \frac{1}{\lambda_{i}} \nabla_{ a _{i}}\varphi_{i}$, so
$
\phi_{k,i}=d_{k,i}\varphi_{i}
$
 \item \quad  
$K_{i}=K(a_{i}),\nabla K_{i}=\nabla K(a_{i})$ and so on. 
\end{enumerate}
\end{definition}

Let us collect some standard interaction estimates for these bubbles. 

\begin{lemma}\label{lem_interactions} 
Let $k,l=1,2,3$ and $i,j = 1, \ldots,p$. We have
\begin{enumerate}[label=(\roman*)]
  \item \quad 
$ \vert \phi_{k,i}\vert, \vert \lambda_{i}\partial_{\lambda_{i}}\phi_{k,i}\vert, \vert \frac{1}{\lambda_{i}}\nabla_{a_{i}} \phi_{k,i}\vert\leq C \varphi_{i}$
  \item \quad 
$ 
\int \varphi_{i}^{\frac{4}{n-2}} \phi_{k,i}\phi_{k,i}
=
c_{k}\cdot id
+
O(\lambda_{i}^{2-n}+\lambda_{i}^{-2}), \;c_{k}>0$
 \item \quad 
$
\int \varphi_{i}^{\frac{n+2}{n-2}}\phi_{k,j}
= 
b_{k}d_{k,i}\varepsilon_{i,j}+o_{\varepsilon}(\varepsilon_{i,j})
=
\frac{n+2}{n-2}\int \phi_{k,i}\varphi_{i}^{\frac{4}{n-2}}\varphi_{j}, \; b_{k}>0,
\,i\neq j$
  \item \quad  
$
\int \varphi_{i}^{\frac{4}{n-2}} \phi_{k,i}\phi_{l,i}
= 
O( \lambda_{i}^{2-n} +\lambda_{i}^{-2})$
for $k\neq l$, 
$\int \varphi_{i}^{\frac{2n}{n-2}}=c_{1}+O(\lambda_{i}^{2-n})$ and
$$
\int \varphi_{i}^{\frac{n+2}{n-2}} \phi_{k,i}
= 
O( \lambda_{i}^{2-n})\; \text{ for }\; k=2,3
$$
  \item \quad 
$
\int \varphi_{i}^{\alpha}\varphi_{j}^{\beta} 
=
O(\varepsilon_{i,j}^{\beta})
$
for $i\neq j$ and $\alpha +\beta=\frac{2n}{n-2}, 
\;\alpha>\frac{n}{n-2}>\beta\geq 1 $
 \item \quad 
$
\int \varphi_{i}^{\frac{n}{n-2}}\varphi_{j}^{\frac{n}{n-2}} 
=
O(\varepsilon^{\frac{n}{n-2}}_{i,j}\ln \varepsilon_{i,j}), \,i\neq j
$
  \item \quad  
$
(1, \lambda_{i}\partial_{\lambda_{i}}, \frac{1}{\lambda_{i}}\nabla_{a_{i}})\varepsilon_{i,j}=O(\varepsilon_{i,j})
, \,i\neq j$,
\end{enumerate}
where 
\begin{enumerate}
 \item[$1.)$] \quad
$\varepsilon=\min\{\frac{1}{\lambda_{i}}, \frac{1}{\lambda_{j}}, 
\varepsilon_{i,j}\}
,\;
\varepsilon_{i,j}
=
(
\frac{\lambda_{i}}{\lambda_{j}}
+ 
\frac{\lambda_{j}}{\lambda_{i}}
+
\lambda_{i}\lambda_{j}\gamma_{n}G_{g_{0}}^{\frac{2}{2-n}}(a _{i},a _{j})
)^{\frac{2-n}{2}}
$
 \item[$2.)$] \quad 
$
c_{1}
=
\int_{\R^{n}}\frac{1}{(1+r^{2})^{n}}
,\;
c_{2}
= 
\frac{(n-2)^{2}}{4}\int_{\R^{n}} \frac{\vert r^{2}-1\vert^{2}}{(1+r^{2})^{n+2}}
,\;
c_{3}
=
\frac{(n-2)^{2}}{n}\int_{\R^{n}} \frac{r^{2}}{(1+r^{2})^{n+2}}.
$
\end{enumerate}
\end{lemma}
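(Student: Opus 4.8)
The plan is to derive all seven families of estimates by the standard device of passing to conformal normal coordinates centred at the relevant concentration point and reducing everything to explicit integrals of the flat model bubble $U(y)=(1+|y|^{2})^{-\frac{n-2}{2}}$ on $\R^{n}$. Recall that in the conformal normal coordinates induced by $u_{a}$ one has $u_{a}(a)=1$, $\nabla u_{a}(a)=0$, the symmetrized covariant derivatives of $\operatorname{Ric}(g_{a})$ vanish at $a$ (in particular $R_{g_{a}}(a)=0$), cf. \cite{LeeAndParker},\cite{Guenter}, and $\gamma_{n}G_{a}^{\frac{2}{2-n}}=r_{a}^{2}\big(1+O(r_{a}^{n-2})\big)$ near $a$ with the remainder governed by $H_{a}=H_{r,a}+H_{s,a}$. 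For (i) one simply differentiates: acting on the factor $\big(\tfrac{\lambda}{1+\lambda^{2}r_{a}^{2}}\big)^{\frac{n-2}{2}}$, the operator $\lambda\partial_{\lambda}$ multiplies by $\tfrac{n-2}{2}\tfrac{1-\lambda^{2}r_{a}^{2}}{1+\lambda^{2}r_{a}^{2}}$ and $\tfrac{1}{\lambda}\nabla_{a}$ by a vector bounded by $\tfrac{(n-2)\lambda r_{a}}{1+\lambda^{2}r_{a}^{2}}|\nabla r_{a}|$, both $O(1)$; the extra terms generated by $u_{a}$ and by the $O(r_{a}^{n-2})$ part of $\gamma_{n}G_{a}^{\frac{2}{2-n}}$ are again $O(\varphi_{a,\lambda})$, so every quantity in (i) is $\leq C\varphi_{a,\lambda}$.

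For the single-bubble integrals (ii) and (iv), localise near $a_{i}$ (the region $\{r_{a_{i}}\geq\rho\}$ contributes $O(\lambda_{i}^{2-n})$ since $\varphi_{i}\lesssim\lambda_{i}^{-\frac{n-2}{2}}$ there) and rescale $x=a_{i}+y/\lambda_{i}$. The leading term is the $\R^{n}$-integral of $U^{\frac{4}{n-2}}(d_{k}U)(d_{l}U)$, which produces $c_{k}\cdot\operatorname{id}$ for $k=l$ (the $c_{k}>0$ being the displayed Beta-type integrals) and vanishes for $k\neq l$ by parity; likewise $\int_{\R^{n}}U^{\frac{n+2}{n-2}}(d_{k}U)=0$ for $k=2,3$, using for $d_{2}$ the identity $2\!\int(1+|y|^{2})^{-(n+1)}=\int(1+|y|^{2})^{-n}$. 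The corrections split into the contribution of $u_{a_{i}}(a_{i}+y/\lambda_{i})=1+\tfrac12 D^{2}u_{a_{i}}(a_{i})[y/\lambda_{i},y/\lambda_{i}]+\dots$ and of the Weyl part of $g_{a_{i}}$, both at relative order $|y/\lambda_{i}|^{2}$, and of the $H_{a_{i}}$-part at relative order $|y/\lambda_{i}|^{n-2}$; since the model integrands decay fast enough the first two integrate to $O(\lambda_{i}^{-2})$ and the last to $O(\lambda_{i}^{2-n})$ — no $O(\lambda_{i}^{-1})$ term appears, thanks to $R_{g_{a_{i}}}(a_{i})=0$ and $\nabla u_{a_{i}}(a_{i})=0$ — giving $O(\lambda_{i}^{2-n}+\lambda_{i}^{-2})$, and $O(\lambda_{i}^{2-n})$ alone for $\int\varphi_{i}^{\frac{n+2}{n-2}}\phi_{k,i}$, $k=2,3$, where the order-$\lambda_{i}^{-2}$ terms vanish by trace-freeness and parity. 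The normalisation $\int\varphi_{i}^{\frac{2n}{n-2}}=c_{1}+O(\lambda_{i}^{2-n})$ is the same computation with the $H_{a_{i}}$-correction dominant.

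For the two-bubble interactions, by symmetry of $\varepsilon_{i,j}$ in $(i,j)$ assume $\lambda_{i}\leq\lambda_{j}$ and split $M$ into $B(a_{i},\rho)$, $B(a_{j},\rho)$ and the complement. For (v), on $B(a_{i},\rho)$ we have $\varphi_{j}\lesssim\varepsilon_{i,j}$, essentially frozen at $\varphi_{j}(a_{i})$, which via the conformal relation $G_{g_{a_{j}}}(a_{j},\cdot)=u_{a_{j}}(a_{j})^{-1}u_{a_{j}}(\cdot)^{-1}G_{g_{0}}(a_{j},\cdot)$ and $u_{a_{j}}(a_{j})=1$ equals $\lambda_{j}^{-\frac{n-2}{2}}\big(\gamma_{n}G_{g_{0}}^{\frac{2}{2-n}}(a_{i},a_{j})\big)^{-\frac{n-2}{2}}(1+o(1))$, so that $\lambda_{i}^{-\frac{n-2}{2}}\varphi_{j}(a_{i})=\varepsilon_{i,j}(1+o(1))$ in the separated regime; since $\alpha>\tfrac{n}{n-2}$ makes $\int_{B(a_{i},\rho)}\varphi_{i}^{\alpha}\asymp\lambda_{i}^{-\frac{(n-2)\beta}{2}}$ convergent, this region contributes $\asymp\varepsilon_{i,j}^{\beta}$, while on $B(a_{j},\rho)$ and the neck $\varphi_{i}$ is frozen at $\varphi_{i}(a_{j})\lesssim\varepsilon_{i,j}$ and $\int_{B(a_{j},\rho)}\varphi_{j}^{\beta}$ only carries $\lambda_{j}^{-\frac{(n-2)\beta}{2}}$ times a harmless power of $\rho$, hence these are lower order; thus $\int\varphi_{i}^{\alpha}\varphi_{j}^{\beta}=O(\varepsilon_{i,j}^{\beta})$. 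Estimate (vi) is the borderline $\alpha=\beta=\tfrac{n}{n-2}$ of the same split, the logarithm coming from the log-divergence of $\int_{\R^{n}}(1+|y|^{2})^{-n/2}$. For (iii), freeze $\varphi_{i}^{\frac{n+2}{n-2}}$ as its $L^{1}$-mass $\asymp\lambda_{i}^{-\frac{n-2}{2}}$ times a Dirac at $a_{i}$: then $\int\varphi_{i}^{\frac{n+2}{n-2}}\phi_{k,j}=\lambda_{i}^{-\frac{n-2}{2}}(d_{k,j}\varphi_{j})(a_{i})+o_{\varepsilon}(\varepsilon_{i,j})$, and since $\lambda_{i}^{-\frac{n-2}{2}}\varphi_{j}(a_{i})$ depends on $(\lambda_{i},a_{i},\lambda_{j},a_{j})$ only through $\varepsilon_{i,j}$ to leading order, $d_{k,j}$ and $d_{k,i}$ yield the same leading term $b_{k}\,d_{k,i}\varepsilon_{i,j}$ (in normal coordinates about $a_{i}$ for $k=3$); the second equality follows identically, or formally from $\tfrac{n+2}{n-2}\phi_{k,i}\varphi_{i}^{\frac{4}{n-2}}=d_{k,i}\big(\varphi_{i}^{\frac{n+2}{n-2}}\big)$ and commuting $d_{k,i}$ with $\int(\cdot)\varphi_{j}$. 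Finally (vii): with $\varepsilon_{i,j}=E^{-\frac{n-2}{2}}$, $E=\tfrac{\lambda_{i}}{\lambda_{j}}+\tfrac{\lambda_{j}}{\lambda_{i}}+\lambda_{i}\lambda_{j}\gamma_{n}G_{g_{0}}^{\frac{2}{2-n}}(a_{i},a_{j})$, each of $\lambda_{i}\partial_{\lambda_{i}}$ and $\tfrac{1}{\lambda_{i}}\nabla_{a_{i}}$ sends $E$ to a quantity $\lesssim E$ — for the spatial derivative because $|\nabla_{a_{i}}\gamma_{n}G_{g_{0}}^{\frac{2}{2-n}}(a_{i},a_{j})|\lesssim d(a_{i},a_{j})^{-1}\gamma_{n}G_{g_{0}}^{\frac{2}{2-n}}\lesssim\lambda_{i}\gamma_{n}G_{g_{0}}^{\frac{2}{2-n}}$ when $\lambda_{i}d(a_{i},a_{j})\gtrsim1$, and in the complementary regime $E$ is dominated by $\lambda_{j}/\lambda_{i}$, differentiated trivially — whence $d_{k,i}\varepsilon_{i,j}=O(\varepsilon_{i,j})$.

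The main obstacle is bookkeeping, not ideas: one must check carefully that in conformal normal coordinates the metric and conformal-factor corrections to the single-bubble integrals genuinely start at order $\lambda_{i}^{-2}$ rather than $\lambda_{i}^{-1}$ — which is exactly where the Lee--Parker normalisations enter — and one must organise the nested-versus-separated regimes in (iii), (v), (vi) so that the remainders are uniformly $o_{\varepsilon}(\varepsilon_{i,j})$. For $n=3,4,5$ the competing exponents $\lambda^{2-n}$ and $\lambda^{-2}$ are both small, no borderline cancellation is needed, and the estimates are the familiar ones of Bahri-type bubble calculus, cf. \cite{BahriCriticalPointsAtInfinity}.
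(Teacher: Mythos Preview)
Your sketch is correct and follows exactly the standard Bahri-type bubble calculus that the paper invokes: the paper's own ``proof'' is a one-line citation to the corresponding lemma in \cite{may-cv}, so there is no argument to compare against beyond the classical one you reproduce. Your outline --- localise and rescale to the flat model $U$ in conformal normal coordinates, use the Lee--Parker normalisations to kill the would-be $\lambda^{-1}$ corrections and get $O(\lambda^{-2})+O(\lambda^{2-n})$ for the single-bubble integrals, and handle the two-bubble terms by the separated/nested splitting with Dirac-type freezing --- is precisely the route taken in \cite{may-cv} and in \cite{BahriCriticalPointsAtInfinity}.

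One small remark on (iii): your passage from $d_{k,j}$ to $d_{k,i}$ via ``depends only through $\varepsilon_{i,j}$'' is a little too quick as stated, since $\lambda_{i}\partial_{\lambda_{i}}\varepsilon_{i,j}$ and $\lambda_{j}\partial_{\lambda_{j}}\varepsilon_{i,j}$ genuinely differ (their sum is $-(n-2)\varepsilon_{i,j}^{\frac{n}{n-2}}\lambda_{i}\lambda_{j}\gamma_{n}G_{g_{0}}^{\frac{2}{2-n}}(a_{i},a_{j})$, cf.\ the computation after \eqref{interaction_sign}). The honest justification of the second equality in (iii) is the one you also mention: write $\tfrac{n+2}{n-2}\phi_{k,i}\varphi_{i}^{\frac{4}{n-2}}=d_{k,i}\big(\varphi_{i}^{\frac{n+2}{n-2}}\big)$, pull $d_{k,i}$ outside the integral, and evaluate $d_{k,i}\big(b_{1}\varepsilon_{i,j}\big)$ directly; the equality of the two expressions in (iii) then holds up to the stated $o_{\varepsilon}(\varepsilon_{i,j})$ precisely because the discrepancy $d_{k,i}\varepsilon_{i,j}+d_{k,j}\varepsilon_{i,j}$ (for $k=2$) is of that size in the relevant regime. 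This is bookkeeping, not a gap.
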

\begin{proof}
 Cf. \ref{I-lem_interactions} in \cite{may-cv}.
\end{proof}
For a better description of the gradient  we decompose the  second variation. 
To that end we recall from \cite{may-cv}, 
cf. 
Lemma \ref{I-lem_degeneracy_and_pseudo_critical_points} and 
Proposition \ref{I-prop_smoothness_of_u_a_b},
\begin{lemma} 
For $\omega > 0$ solving 
$$L_{g_{0}}\omega =K\omega  ^{\frac{n+2}{n-2}}$$
there exist $\varepsilon>0$, an open neighbourhood $U$ of $\omega$ and 
$$
h:B_{\varepsilon}^{\R^{m+1}}(0)\longrightarrow H_{0}(\omega )^{\perp_{L_{g_{0}}}}
,\; 
H_{0}(\omega)=ker \partial^{2}J(\omega)
$$
smooth such, that
\begin{equation*}
\begin{split}
\{
w\in U 
& 
\;:\; \Pi_{H_{0}(\omega )^{\perp_{L_{g_{0}}}}} \nabla J(w)=0
\}
\\
= &
\{
u_{\alpha, \beta}
=
(1+\alpha) \omega +\beta^{i}\mathrm{e}_{i}
+
h(\alpha, \beta)\;:\; 
(\alpha, \beta)\in B_{\varepsilon}^{m+1}(0)\},
\end{split}
\end{equation*} 
where  $\{\omega,\mathrm{e}_{i}\;:\; i=1,\ldots,m\}\in ONB_{L_{g_{0}}}(ker \partial^{2}J(\omega))$ and 
$$\Vert h(\alpha, \beta)\Vert =O(\vert \alpha\vert^{2}+\vert \beta \vert^{2}).$$
We call $w \in U$ a pseudo critical point related to $\omega$, if 
$$\Pi_{H_{0}(\omega)^{\perp_{L_{g_{0}}}}} \nabla J(w)=0.$$
Moreover there holds
$
\vert h(\alpha, \beta)\vert_{C^{k}}\longrightarrow 0\; \text{ as }\;  \vert \alpha\vert+
\vert \beta \vert \longrightarrow 0
$
for any $k\in \N$.
\end{lemma}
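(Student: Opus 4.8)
\section*{Proof plan}

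The plan is to run a Lyapunov--Schmidt reduction about $\omega$ with respect to the $L_{g_{0}}$-inner product $\langle \cdot,\cdot\rangle_{L_{g_{0}}}$ on $W^{1,2}(M)$. First I would record two structural facts. Since $\omega$ solves $L_{g_{0}}\omega=K\omega^{\frac{n+2}{n-2}}$ it is, after the normalisation $r=k$, a critical point, $\nabla J(\omega)=0$; and the scaling invariance $J(tu)=J(u)$ forces $\omega\in H_{0}(\omega)$, because differentiating the identity $\partial J(u)u=0$ (valid for every $u$) in an arbitrary direction $w$ yields $\partial^{2}J(\omega)[\omega,w]+\partial J(\omega)w=0$, hence $\partial^{2}J(\omega)[\omega,w]=0$ for all $w$. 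Next I would study the self-adjoint operator $A=\nabla^{2}J(\omega)$ determined by $\langle Av,w\rangle_{L_{g_{0}}}=\partial^{2}J(\omega)vw$. Reading Proposition \ref{prop_derivatives_of_J}(ii), its leading term $\int L_{g_{0}}vw=\langle v,w\rangle_{L_{g_{0}}}$ is the identity, while the remaining terms are multiplication by $K\omega^{\frac{4}{n-2}}$ together with finite-rank corrections, all composed with $L_{g_{0}}^{-}=G_{g_{0}}$; as $G_{g_{0}}$ gains two derivatives and $\omega$ is smooth and bounded, these assemble into a compact operator $T$, so $A=c\,(\mathrm{Id}-T)$ is self-adjoint and Fredholm of index zero. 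Consequently $H_{0}(\omega)=\ker A$ is finite dimensional, $\mathrm{range}(A)=H_{0}(\omega)^{\perp_{L_{g_{0}}}}$, and $A$ restricts to a linear isomorphism of $H_{0}(\omega)^{\perp_{L_{g_{0}}}}$.

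With an $L_{g_{0}}$-orthonormal basis $\{\omega,\mathrm{e}_{1},\dots,\mathrm{e}_{m}\}$ of $H_{0}(\omega)$ fixed, I would write $w-\omega=\alpha\omega+\beta^{i}\mathrm{e}_{i}+v$ with $v\in H_{0}(\omega)^{\perp_{L_{g_{0}}}}$ and solve the reduced equation for $v$. Setting $F(\alpha,\beta,v)=\Pi_{H_{0}(\omega)^{\perp_{L_{g_{0}}}}}\nabla J((1+\alpha)\omega+\beta^{i}\mathrm{e}_{i}+v)$ one has $F(0,0,0)=\Pi_{H_{0}(\omega)^{\perp_{L_{g_{0}}}}}\nabla J(\omega)=0$, and since $A$ is self-adjoint it commutes with the projection and preserves $H_{0}(\omega)^{\perp_{L_{g_{0}}}}$, so $D_{v}F(0,0,0)=A|_{H_{0}(\omega)^{\perp_{L_{g_{0}}}}}$ is exactly the isomorphism found above. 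Because $J\in C^{2,\alpha}_{loc}$ by Proposition \ref{prop_derivatives_of_J}, the map $\nabla J$ and hence $F$ are $C^{1,\alpha}$, so the implicit function theorem produces $\varepsilon>0$ and a unique $C^{1,\alpha}$ map $h=h(\alpha,\beta)$ with $h(0,0)=0$ solving $F(\alpha,\beta,h(\alpha,\beta))=0$ and exhausting, in a neighbourhood $U$ of $\omega$, all solutions of $\Pi_{H_{0}(\omega)^{\perp_{L_{g_{0}}}}}\nabla J(w)=0$. This is precisely the claimed parametrisation $u_{\alpha,\beta}=(1+\alpha)\omega+\beta^{i}\mathrm{e}_{i}+h(\alpha,\beta)$.

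For the quadratic estimate I would differentiate $F(\alpha,\beta,h)=0$ and use that $D_{(\alpha,\beta)}F(0,0,0)=\Pi_{H_{0}(\omega)^{\perp_{L_{g_{0}}}}}A(\omega,\mathrm{e}_{i})=0$, which holds exactly because $\omega,\mathrm{e}_{i}\in\ker A$; hence $Dh(0,0)=-[A|_{H_{0}(\omega)^{\perp_{L_{g_{0}}}}}]^{-1}D_{(\alpha,\beta)}F(0,0,0)=0$. Writing $\nabla J(w)=A(w-\omega)+N(w)$ with a quadratically controlled remainder $\|N(w)\|\lesssim\|w-\omega\|^{2}$ and inverting $A$ on $H_{0}(\omega)^{\perp_{L_{g_{0}}}}$ then gives $\|h\|\lesssim\|w-\omega\|^{2}\simeq|\alpha|^{2}+|\beta|^{2}+\|h\|^{2}$, and absorbing $\|h\|^{2}$ for small data yields $\|h(\alpha,\beta)\|=O(|\alpha|^{2}+|\beta|^{2})$. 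The remainder bound is the one genuinely delicate point: on $W^{1,2}$ the critical exponent only gives $J\in C^{2,\alpha}$, so to secure a true second-order expansion I would localise near the smooth, strictly positive $\omega$, where $u\mapsto u^{\frac{n+2}{n-2}}$ is smooth and elliptic regularity permits estimating in H\"older norms rather than merely in $W^{1,2}$. Finally the convergence $|h(\alpha,\beta)|_{C^{k}}\to 0$ follows by bootstrapping: a pseudo critical point satisfies $\nabla J(w)\in H_{0}(\omega)$, i.e. $L_{g_{0}}w-r_{w}\K_{w}w^{\frac{n+2}{n-2}}=L_{g_{0}}\psi$ for some $\psi\in H_{0}(\omega)$ whose members are smooth eigenfunctions; Schauder estimates then upgrade the $W^{1,2}$-convergence $w\to\omega$ as $(\alpha,\beta)\to 0$ to convergence in every $C^{k}$, and subtracting $(1+\alpha)\omega+\beta^{i}\mathrm{e}_{i}$ gives $|h|_{C^{k}}\to 0$.
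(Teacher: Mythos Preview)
Your proposal is correct and follows the standard Lyapunov--Schmidt reduction, which is precisely the approach of the cited reference \cite{may-cv} from which the paper quotes this lemma without reproving it. The key structural observations---that scaling invariance forces $\omega\in\ker\partial^{2}J(\omega)$, that $\nabla^{2}J(\omega)=c(\mathrm{Id}-T)$ with $T$ compact by the form of Proposition~\ref{prop_derivatives_of_J}(ii), and that the implicit function theorem then applies on $H_{0}(\omega)^{\perp_{L_{g_{0}}}}$---are all identified correctly, as is the mechanism $Dh(0,0)=0$ behind the quadratic bound.

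The one point worth flagging, which you already noticed, is that the claimed \emph{smoothness} of $h$ (not merely $C^{1,\alpha}$) does not follow from the $C^{2,\alpha}$ regularity of $J$ on $W^{1,2}$ alone: for $n=5$ the nonlinearity $t\mapsto t^{7/3}$ is only $C^{2}$ at $t=0$. Your remedy---to run the reduction in a H\"older or $C^{k}$ setting near the smooth, strictly positive $\omega$, where the Nemytskii map $u\mapsto u^{\frac{n+2}{n-2}}$ is genuinely smooth, and then bootstrap via Schauder estimates using that pseudo critical points solve $L_{g_{0}}w=r_{w}\bar K_{w}w^{\frac{n+2}{n-2}}+L_{g_{0}}\psi$ with $\psi$ a smooth kernel element---is exactly what is needed and matches the argument in \cite{may-cv}.
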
 

 We may thereby define a neighbourhood of, where a loss of compactness, if present, has to occur. 

\begin{definition}\label{def_V(omega,p,e)} 
Let  
$\omega\geq 0$ solve $L_{g_{0}}\omega =K\omega  ^{\frac{n+2}{n-2}}, \,p\in \N$ and $\varepsilon>0$. 
Let for $u\in X$
\begin{equation*}
\begin{split}
A_{u}(\omega,p, \varepsilon)
=
\{ &
(\alpha, \beta_{k}, \alpha_{i}, \lambda_{i},a_{i})\in (\R_{+}, \R^{m}, \R^{p}_{+}, \R^{p}_{+},M^{p}) \;:\;
\\
& \;
\underset{i\neq j}{\fa}\;
 \lambda_{i}^{-1}, \lambda_{j}^{-1}, \varepsilon_{i,j}, \vert 1-\frac{r\alpha_{i}^{\frac{4}{n-2}}K(a_{i})}{4n(n-1)k}\vert,
\\
& \quad\;\;\;
\vert 1-\frac{r\alpha^{\frac{4}{n-2}}}{k}\vert, \vert \beta \vert,
\Vert u-u_{\alpha, \beta}-\alpha^{i}\varphi_{a_{i}, \lambda_{i}}\Vert
<\varepsilon\
\}.
\end{split}
\end{equation*} 
We define 
\begin{equation*}\begin{split}
V(\omega, p, \varepsilon)
= 
\{
u\in X  
\;:\;
A_{u}(\omega,p, \varepsilon)\neq \emptyset
\}
\end{split}\end{equation*}
and call $V(\omega,p,\varepsilon)$ in case $p>0$ a neighbourhood of a potential critical point at infinity.
\end{definition}
Note, that
$u_{\alpha, \beta}=0$, if $\omega=0$, and the conditions on $\alpha$ and $\beta_{k}$ become trivial. 
Moreover  either
$w=0$ or $w>0$
due to the strong maximum principle.

\begin{proposition}\label{prop_concentration_compactness} 
Every Palais-Smale sequence of $J$ in $X$
is precompact in some $V(\omega,p,\varepsilon)$, i.e.
\begin{equation*}
\forall \; t_{k}\longrightarrow \infty \; \exists\; (t_{k_{l}})\subset (t_{k})\;:\; 
u_{t_{k_{l}}}\in V(\omega,p,\varepsilon),
\end{equation*}
for every $\varepsilon>0$.
\end{proposition}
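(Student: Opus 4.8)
The plan is to run the classical concentration--compactness decomposition of Struwe type, adapted to the conformally prescribed scalar curvature setting and to the manifold bubbles $\varphi_{a,\lambda}$ introduced above. Let $(u_k)=(u_{t_k})$ be a Palais--Smale sequence: $J(u_k)$ bounded and $\partial J(u_k)\to 0$ in $W^{-1,2}_{g_0}(M)$. First I would note that, since $k_{u_k}=1$ and $J(u_k)=r_{u_k}=\Vert u_k\Vert^{2}$ by Proposition \ref{prop_derivatives_of_J}, boundedness of the energy together with positivity of the Yamabe invariant gives a uniform bound on $\Vert u_k\Vert$; hence, up to a subsequence, $u_k\rightharpoonup \omega\geq 0$ weakly in $W^{1,2}(M)$. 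Passing to the limit in $\frac{1}{2}\partial J(u_k)v=k_{u_k}^{\frac{2-n}{n}}[\int L_{g_0}u_k v-\frac{r_{u_k}}{k_{u_k}}\int Ku_k^{\frac{n+2}{n-2}}v]\to 0$ and using that $r_{u_k}/k_{u_k}$ stays in a compact subinterval of $(0,\infty)$, the weak limit $\omega$ solves $L_{g_0}\omega=cK\omega^{\frac{n+2}{n-2}}$ weakly for some $c\geq 0$; after the scaling $\omega\mapsto c^{\frac{n-2}{4}}\omega$ one normalises $c$ so that $L_{g_0}\omega=K\omega^{\frac{n+2}{n-2}}$, and by elliptic regularity and the strong maximum principle either $\omega\equiv 0$ or $\omega>0$.

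Next I would pass to the defect $v_k=u_k-\omega$, which is again a Palais--Smale sequence at a shifted level with $v_k\rightharpoonup 0$. If $v_k\to 0$ strongly we are done with $p=0$. Otherwise I perform the standard blow-up: choose concentration points $a_k\in M$ and scales $\lambda_k\to\infty$ realising the loss of compactness, rescale in $g_{a_k}$-conformal normal coordinates, and use the Caffarelli--Gidas--Spruck and Obata classification of positive entire solutions of the limiting Yamabe equation on $\mathbb{R}^n$ to identify the bubble profile; translated back to $M$ this produces a coefficient $\alpha_k>0$ and a bubble $\alpha_k\varphi_{a_k,\lambda_k}$ such that $\Vert v_k-\alpha_k\varphi_{a_k,\lambda_k}-(\text{next defect})\Vert$ drops by a definite quantum of energy. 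The iteration terminates after finitely many steps, since each bubble carries at least the fixed energy quantum associated with the best Sobolev constant while $\Vert u_k\Vert$ stays bounded; after relabelling this yields $u_k=\omega+\sum_{i=1}^{p}\alpha_k^{i}\varphi_{a_k^{i},\lambda_k^{i}}+w_k$ with $\Vert w_k\Vert\to 0$, $\lambda_k^{i}\to\infty$, and $\varepsilon_{i,j}\to 0$ for $i\neq j$ by construction, distinct bubbles being asymptotically non-interacting.

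It remains to pin down the finitely many coefficients. Testing $\partial J(u_k)\to 0$ against $\phi_{1,i}=\varphi_{a_k^{i},\lambda_k^{i}}$ and expanding with the interaction estimates of Lemma \ref{lem_interactions} forces $\vert 1-\tfrac{r_{u_k}(\alpha_k^{i})^{4/(n-2)}K(a_k^{i})}{4n(n-1)k_{u_k}}\vert\to 0$; testing against $\omega$ (when $\omega>0$) forces $\vert 1-\tfrac{r_{u_k}\alpha_k^{4/(n-2)}}{k_{u_k}}\vert\to 0$ for the $\omega$-component, with $\alpha_k$ read off from $\langle u_k,\omega\rangle_{L_{g_0}}$; and projecting onto the kernel directions $\mathrm{e}_i$ of $\partial^{2}J(\omega)$, together with the description of pseudo-critical points of $J$ near $\omega$ recalled above, forces the $\beta$-parameters to zero and identifies the $\omega$-part of $u_k$ with a point $u_{\alpha_k,\beta_k}$ on the pseudo-critical manifold up to a strongly vanishing error. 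Collecting these facts, for every $\varepsilon>0$ the tuple $(\alpha_k,\beta_k,\alpha_k^{i},\lambda_k^{i},a_k^{i})$ eventually lies in $A_{u_k}(\omega,p,\varepsilon)$, i.e. $u_k\in V(\omega,p,\varepsilon)$ for $k$ large, which is the assertion. The main technical obstacle is the manifold blow-up step: one must verify that the correct model is exactly $\varphi_{a,\lambda}=u_a(\lambda/(1+\lambda^{2}\gamma_n G_a^{2/(2-n)}))^{(n-2)/2}$, controlling the regular and singular parts $H_{r,a},H_{s,a}$ of the Green's function under rescaling, and that the neck regions between $\omega$ and the bubbles, and between distinct bubbles, carry no residual energy; this is precisely where the conformal normal coordinate normalisation and the dimension restriction $n=3,4,5$ enter, exactly as in \cite{may-cv}.
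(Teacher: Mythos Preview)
Your sketch is correct and follows precisely the classical Struwe--Bahri--Coron approach the paper itself defers to by citation (\cite{BahriCoronCriticalExponent}, \cite{may-cv}, \cite{StruweConcentrationCompactness}); the paper gives no independent proof of this proposition. One minor remark: the dimension restriction $n=3,4,5$ is not actually needed for the concentration--compactness statement itself---it enters elsewhere in the paper (long-time existence of \eqref{Yamabe_flow}, the precise Green's function expansion in the shadow flow computations)---so your final caveat about $n=3,4,5$ being essential here is slightly misplaced, though harmless.
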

This characterisation of  lack of compactness is classical like the subsequent 
reduction by minimisation and we refer to 
\cite{BahriCoronCriticalExponent},\cite{may-cv} and \cite{StruweConcentrationCompactness}.  

\begin{proposition}\label{prop_optimal_choice} 
For every $\varepsilon_{0}>0$  there exists $\varepsilon_{1}>0$ such, that  for 
$$u\in V(\omega, p, \varepsilon)\; \text{ with } \;\varepsilon<\varepsilon_{1}$$
the minimisation problems
\begin{enumerate}[label=(\roman*)]
\item \quad
$
\inf
_
{
(\tilde \alpha, \tilde\beta_{k}, \tilde\alpha_{i}, \tilde a_{i}, \tilde\lambda_{i})\in A_{u}(\omega,p,2\varepsilon_{0}) 
}
\int 
Ku^{\frac{4}{n-2}}
\vert 
u
-
u_{\tilde \alpha, \tilde \beta}
-
\tilde\alpha^{i}\varphi_{\tilde a_{i}, \tilde \lambda_{i}}
\vert^{2}
$
\item \quad 
$
\inf
_
{
(\tilde \alpha, \tilde\beta_{k}, \tilde\alpha_{i}, \tilde a_{i}, \tilde\lambda_{i})\in A_{u}(\omega,p,2\varepsilon_{0}) 
}
\Vert 
u
-
u_{\tilde \alpha, \tilde \beta}
-
\tilde\alpha^{i}\varphi_{\tilde a_{i}, \tilde \lambda_{i}}
\Vert^{2}
$
\end{enumerate}
admit each a unique minimise $(\alpha, \beta_{k}, \alpha_{i},a_{i}, \lambda_{i})\in A_{u}(\omega,p, \varepsilon_{0})$ and we define
\begin{equation*}\begin{split}
\varphi_{i}=\varphi_{a_{i}, \lambda_{i}},v=u-u_{\alpha, \beta}-\alpha^{i}\varphi_{i},
\;\, \varepsilon_{i,j}
=
(
\frac{\lambda_{j}}{\lambda_{i}}
+
\frac{\lambda_{i}}{\lambda_{j}}
+
\lambda_{i}\lambda_{j}\gamma_{n}G_{g_{0}}^{\frac{2}{2-n}}(  a _{i},  a _{j})
)^{\frac{2-n}{2}}
\end{split}\end{equation*}
depending on the chosen minimisation.    
Moreover 
$$(\alpha, \beta_{k}, \alpha_{i},a_{i}, \lambda_{i}) \; \text{ and }\; v$$ 
depend smoothly on $u$.
\end{proposition}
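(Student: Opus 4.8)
The plan is to treat the two minimisations by a single argument. Write $\tau=(\tilde\alpha,\tilde\beta_{k},\tilde\alpha_{i},\tilde a_{i},\tilde\lambda_{i})$ for the parameter tuple and
\begin{equation*}
N_{u}(\tau)=\langle v(\tau),v(\tau)\rangle_{*},\qquad v(\tau)=u-u_{\tilde\alpha,\tilde\beta}-\tilde\alpha^{i}\varphi_{\tilde a_{i},\tilde\lambda_{i}},
\end{equation*}
where $\langle\cdot,\cdot\rangle_{*}$ is $\langle\cdot,\cdot\rangle_{L_{g_{0}}}$ in case $(ii)$ and $(\psi,\chi)\mapsto\int Ku^{\frac{4}{n-2}}\psi\chi$ in case $(i)$; I will only carry out case $(ii)$, case $(i)$ being identical upon replacing the interaction estimates of Lemma \ref{lem_interactions} by their $Ku^{\frac{4}{n-2}}$-weighted analogues. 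By the smooth dependence of the family $u_{\alpha,\beta}$ of pseudo critical points related to $\omega$ introduced above on $(\alpha,\beta)$ and of $\varphi_{a,\lambda}$ on $(a,\lambda)$, the map $(u,\tau)\mapsto N_{u}(\tau)$ is smooth. First I would record that $\overline{A_{u}(\omega,p,2\varepsilon_{0})}$ is compact for every $u\in V(\omega,p,\varepsilon)$: the $\tilde a_{i}$ lie in the compact $M$, the defining inequalities keep $\tilde\alpha$ and the $\tilde\alpha_{i}$ in a fixed interval of $\R_{+}$ and $\tilde\beta$ bounded (trivially so if $\omega=0$), each $\tilde\lambda_{i}$ is bounded below, and $\tilde\lambda_{i}$ is bounded above as well, since $\varphi_{\tilde a_{i},\tilde\lambda_{i}}\rightharpoonup0$ in $W^{1,2}$ as $\tilde\lambda_{i}\to\infty$ while $\Vert\varphi_{\tilde a_{i},\tilde\lambda_{i}}\Vert\gtrsim1$ and $\Vert u\Vert\gtrsim1$ (from $k=\int Ku^{\frac{2n}{n-2}}=1$ and Sobolev), so a large $\tilde\lambda_{i}$ would force $\Vert v(\tau)\Vert\gtrsim1>2\varepsilon_{0}$. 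Hence $N_{u}$ attains its minimum over $\overline{A_{u}(\omega,p,2\varepsilon_{0})}$, and since membership $u\in V(\omega,p,\varepsilon)$ furnishes some $\tau_{0}\in A_{u}(\omega,p,\varepsilon)$ with $N_{u}(\tau_{0})<\varepsilon^{2}$, this minimum is $<\varepsilon^{2}$.

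Next I would differentiate $N_{u}$ in $\tau$, using the natural frame $(d_{1,i},d_{2,i},d_{3,i})=(1,-\tilde\lambda_{i}\partial_{\tilde\lambda_{i}},\tfrac1{\tilde\lambda_{i}}\nabla_{\tilde a_{i}})$ of Definition \ref{def_relevant_quantities}. Then $\partial_{\tau}v$ is, modulo lower order contributions from the map $h$, a linear combination with bounded coefficients of $\omega,\mathrm{e}_{j},\phi_{1,i},\phi_{2,i},\phi_{3,i}$ (the $\omega$ and $\mathrm{e}_{j}$ terms absent when $\omega=0$), so a critical point of $N_{u}$ is exactly one for which $v$ is $\langle\cdot,\cdot\rangle_{*}$-orthogonal to $\mathrm{span}\{\omega,\mathrm{e}_{j},\phi_{1,i},\phi_{2,i},\phi_{3,i}\}$. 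Moreover $\partial_{\tau}^{2}N_{u}=2\langle\partial_{\tau}v,\partial_{\tau}v\rangle_{*}+2\langle v,\partial_{\tau}^{2}v\rangle_{*}$; the first term is the Gram matrix of those directions, which by Lemma \ref{lem_interactions} $(ii),(iii),(iv)$ — using $\varepsilon_{i,j}<2\varepsilon_{0}$, $\tilde\lambda_{i}^{-1}<2\varepsilon_{0}$ and $\varphi_{i}\rightharpoonup0$ against the fixed $\omega,\mathrm{e}_{j}$ — is bounded below by $cI$ with $c>0$ controlled by the $c_{k}$, while the second term is $O(\Vert v\Vert)$. Choosing $\varepsilon_{1}$, hence the associated $\varepsilon_{0}$, small enough makes $\partial_{\tau}^{2}N_{u}\geq\tfrac{c}{2}I$ uniformly on $A_{u}(\omega,p,2\varepsilon_{0})$, so $N_{u}$ is uniformly strictly convex there and has at most one critical point.

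Finally I would localise the minimiser. Since $\tau_{0}\in A_{u}(\omega,p,\varepsilon)$ satisfies $\Vert v(\tau_{0})\Vert<\varepsilon$, one has $|\partial_{\tau}N_{u}(\tau_{0})|=2|\langle v(\tau_{0}),\partial_{\tau}v(\tau_{0})\rangle_{*}|\lesssim\varepsilon$ in the above frame, and the uniform convexity then gives $|\tau^{*}-\tau_{0}|\lesssim\varepsilon$ for the minimiser $\tau^{*}$. As the quantities cutting out $A_{u}(\omega,p,\cdot)$ are Lipschitz on the compact parameter region, each of them at $\tau^{*}$ differs by $O(\varepsilon)$ from its value at $\tau_{0}$, hence lies below $\varepsilon_{0}$ once $\varepsilon$ is small; thus $\tau^{*}\in A_{u}(\omega,p,\varepsilon_{0})$, is interior, and solves the orthogonality system. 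Uniqueness (up to relabelling the bubble indices when $p\geq2$) is the strict convexity, and the smooth dependence of $\tau^{*}=\tau^{*}(u)$, and with it of $v=u-u_{\alpha,\beta}-\alpha^{i}\varphi_{a_{i},\lambda_{i}}$, follows from the implicit function theorem applied to $\partial_{\tau}N_{u}(\tau)=0$, whose $\tau$-Jacobian $\partial_{\tau}^{2}N_{u}$ is invertible. I expect the uniform Hessian lower bound to be the only real point: it is where the smallness of $\varepsilon$ is genuinely spent, absorbing the bubble interactions $\varepsilon_{i,j}$, the concentration errors $\lambda_{i}^{2-n}+\lambda_{i}^{-2}$ and the remainder $\Vert v\Vert$ into the positive constants $c_{k}$, and it simultaneously yields existence, interiority, uniqueness and smoothness of the minimiser.
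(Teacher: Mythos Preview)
The paper does not prove this proposition; immediately after stating it, the text calls the ``reduction by minimisation'' classical and refers to \cite{BahriCoronCriticalExponent}, \cite{may-cv} and \cite{StruweConcentrationCompactness}. Your sketch follows the standard route found in those references and is essentially correct.

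One point deserves care: the claim that $N_{u}$ is uniformly strictly convex on \emph{all} of $A_{u}(\omega,p,2\varepsilon_{0})$ is too strong as written. First, the parameter space contains the factor $M^{p}$, so global convexity is not well-defined; second, for $p\geq 2$ the permutation symmetry you yourself note produces $p!$ distinct minimisers in $A_{u}$, which is incompatible with strict convexity on a connected set. What your Hessian computation actually shows (and what suffices) is that $\partial_{\tau}^{2}N_{u}$ is positive definite \emph{in the local frame at every point of} $A_{u}(\omega,p,2\varepsilon_{0})$, provided $\varepsilon_{0}$ is small enough that the interaction terms $\varepsilon_{i,j}$, the errors $\lambda_{i}^{-2}+\lambda_{i}^{2-n}$ and the remainder $\Vert v\Vert<2\varepsilon_{0}$ are absorbed into the constants $c_{k}$. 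Combined with your localisation estimate $\vert\tau^{*}-\tau_{0}\vert\lesssim\varepsilon$ and the implicit function theorem, this yields the unique minimiser near $\tau_{0}$ on each of the $p!$ connected components of $A_{u}$; the components are genuinely disjoint because any continuous exchange of two bubble labels forces $\varepsilon_{i,j}$ to leave $(0,2\varepsilon_{0})$. Note also that your compactness step and your Gram bound both use $\varepsilon_{0}$ small (e.g.\ $\tilde\alpha_{i}^{2}\Vert\varphi_{i}\Vert^{2}>(2\varepsilon_{0})^{2}$ for the $\tilde\lambda_{i}$ upper bound), which is how the proposition is actually applied throughout the paper.
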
 
\noindent
The above minimisations evidently induce orthogonal properties for  
$$v=u-u_{\alpha, \beta}-\alpha^{i}\var_{i}$$    
with respect to the scalar products
\begin{equation*}
\langle a,b\rangle_{Ku^{\frac{4}{n-2}}}
=
\int Ku^{\frac{4}{n-2}}ab
\; 
\text{ or }
\; 
\langle a,b\rangle_{L_{g_{0}}}
=
\int L_{g_{0}}ab
\end{equation*}
respectively. This justifies to define the  orthogonal spaces, on which $v$ lives.   

\begin{definition}
 
For $u\in V(\omega, p, \varepsilon)$ let
\begin{equation*}\begin{split}
H_{u}(\omega, p, \varepsilon)
=
\langle 
u_{\alpha, \beta}, \partial_{\beta_{i}}u_{\alpha, \beta}, \varphi_{i},-\lambda_{i}\partial_{\lambda_{i}}\varphi_{i}, \frac{1}{\lambda_{i}}\nabla_{a_{i}}\varphi_{i}
\rangle
^{\perp_{Ku^{\frac{4}{n-2}}}}
\end{split}\end{equation*}
or  respectively 
\begin{equation*}\begin{split}
H_{u}(\omega, p, \varepsilon)
=
\langle 
u_{\alpha, \beta}, \partial_{\beta_{i}}u_{\alpha, \beta}, \varphi_{i},-\lambda_{i}\partial_{\lambda_{i}}\varphi_{i}, \frac{1}{\lambda_{i}}\nabla_{a_{i}}\varphi_{i}
\rangle
^{\perp_{L_{g_{0}}}}
\end{split}\end{equation*}
in case $\omega>0$. In case $\omega=0$ let $H_{u}(0,p,\varepsilon)=H_{u}(p,\varepsilon)$ and 
\begin{equation*}\begin{split}
H_{u}(p, \varepsilon)
=
\langle 
\varphi_{i},-\lambda_{i}\partial_{\lambda_{i}}\varphi_{i}, \frac{1}{\lambda_{i}}\nabla_{a_{i}}\varphi_{i}
\rangle
^{\perp_{Ku^{\frac{4}{n-2}}}}
\end{split}\end{equation*}
or respectively 
\begin{equation*}\begin{split}
H_{u}(p, \varepsilon)
=
\langle 
\varphi_{i},-\lambda_{i}\partial_{\lambda_{i}}\varphi_{i}, \frac{1}{\lambda_{i}}\nabla_{a_{i}}\varphi_{i}
\rangle
^{\perp_{L_{g_{0}}}}.
\end{split}\end{equation*}
\end{definition}
Recalling Definition \ref{def_relevant_quantities} and $u_{\alpha,\beta}=0$ in case $\omega=0$ we may simply write 
$$
H_{u}(\omega,p,\varepsilon)=\langle u_{\alpha,\beta},\partial_{\beta_{i}}u_{\alpha,\beta}, \phi_{k,i}\rangle^{\perp_{\cdot}},
\; \text{ in particular} \;   
H_{u}(p,\varepsilon)=\langle   \phi_{k,i}\rangle^{\perp_{\cdot}}
$$  
depending on the chosen minimisation.  These orthogonalities differ only a little,
as the next Lemma, whose proof we delay to Appendix \ref{sec_appendix}, quantifies.  
\begin{lemma}\label{lem_comparing_orthogonalities}
Let $\nu_{1}\in H_{u}(\omega, p, \varepsilon)=\langle u_{\alpha,\beta},\partial_{\beta_{i}}u_{\alpha,\beta},\phi_{k,i}\rangle
^{\perp_{Ku^{\frac{4}{n-2}}}}$. Then 
\begin{enumerate}[label=(\roman*)]
 \item \quad 
 $
 \Pi^{\top_{L_{g_{0}}}}_{\langle \phi_{k,i}\rangle}
\nu_{1}
=
O
(
(
\frac{\vert  \nabla K_{i}\vert}{\lambda_{i}}
+
\frac{1}{\lambda_{i}^{2}}
+
\frac{1}{\lambda_{i}^{n-2}}
+
\sum_{j\neq i}\varepsilon_{i,j}
+
\Vert v \Vert)
\Vert \nu_{1} \Vert)
$
for $\omega=0$
\item \quad   
 $
 \Pi^{\top_{L_{g_{0}}}}_{\langle u_{\alpha,\beta},\partial_{\beta_{i}}u_{\alpha,\beta} ,\phi_{k,i}\rangle}
\nu_{1}
=
O
(
(
\frac{\vert  \nabla K_{i}\vert}{\lambda_{i}}
+
\frac{1}{\lambda_{i}^{\frac{n-2}{2}}}
+
\sum_{j\neq i}\varepsilon_{i,j}
+
\Vert v \Vert)
\Vert \nu_{1} \Vert)
$ 
for $\omega>0$.
\end{enumerate} 
Conversely for $\nu_{2}\in H_{u}(\omega, p, \varepsilon)=\langle u_{\alpha,\beta}, \partial_{\beta_{i}}u_{\alpha,\beta},\phi_{k,i}\rangle
^{\perp_{L_{g_{0}}}}$ there holds 
\begin{enumerate}[label=(\roman*)]
 \item \quad 
 $\Pi^{\top_{Ku^{\frac{4}{n-2}}}}_{\langle \phi_{k,i}\rangle}
\nu_{2}
=
O
(
(
\frac{\vert  \nabla K_{i}\vert}{\lambda_{i}}
+
\frac{1}{\lambda_{i}^{2}}
+
\frac{1}{\lambda_{i}^{n-2}}
+
\sum_{j\neq i}\varepsilon_{i,j}
+
\Vert v \Vert)\Vert \nu_{2} \Vert)$
for  $\omega=0$
 \item \quad 
  $\Pi^{\top_{Ku^{\frac{4}{n-2}}}}_{\langle u_{\alpha,\beta},\partial_{\beta_{i}}u_{\alpha,\beta},\phi_{k,i}\rangle}
\nu_{2}
=   
O
(
(
\frac{\vert  \nabla K_{i}\vert}{\lambda_{i}}
+
\frac{1}{\lambda_{i}^{\frac{n-2}{2}}}
+
\sum_{j\neq i}\varepsilon_{i,j}
+
\Vert v \Vert)
\Vert \nu_{2} \Vert)
$
for $\omega>0$.
 \end{enumerate}
\end{lemma}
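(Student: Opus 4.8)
The plan is to write each of the four projections as a linear combination of the finitely many vectors spanning the relevant subspace and to control its coefficients by inverting an almost diagonal Gram matrix. Treat first $\omega=0$ and let $\nu_1\in\langle\phi_{k,i}\rangle^{\perp_{Ku^{\frac4{n-2}}}}$. Writing $\Pi^{\top_{L_{g_0}}}_{\langle\phi_{k,i}\rangle}\nu_1=\sum_{k,i}c_{k,i}\phi_{k,i}$ and pairing with each $\phi_{l,j}$ in $\langle\cdot,\cdot\rangle_{L_{g_0}}$ gives the linear system
\begin{equation*}
\langle\nu_1,\phi_{l,j}\rangle_{L_{g_0}}=\sum_{k,i}c_{k,i}\,\langle\phi_{k,i},\phi_{l,j}\rangle_{L_{g_0}}.
\end{equation*}
By the interaction estimates of Lemma \ref{lem_interactions} the Gram matrix $\big(\langle\phi_{k,i},\phi_{l,j}\rangle_{L_{g_0}}\big)$ is, up to $o_\varepsilon(1)$, block diagonal with each $3\times 3$ diagonal block positive definite and bounded away from $0$; hence it is uniformly invertible for $\varepsilon$ small and $\Vert\Pi^{\top_{L_{g_0}}}_{\langle\phi_{k,i}\rangle}\nu_1\Vert\lesssim\max_{l,j}\vert\langle\nu_1,\phi_{l,j}\rangle_{L_{g_0}}\vert$. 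It therefore suffices to bound each $\langle\nu_1,\phi_{l,j}\rangle_{L_{g_0}}$ by the quantity appearing in (i) times $\Vert\nu_1\Vert$.

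For this, recall $\varphi_{a,\lambda}=u_a\psi_{a,\lambda}$, $\psi_{a,\lambda}=\big(\lambda/(1+\lambda^2\gamma_nG_a^{\frac2{2-n}})\big)^{\frac{n-2}{2}}$, and $L_{g_0}(u_a\psi)=u_a^{\frac{n+2}{n-2}}L_{g_a}\psi$ by conformal covariance; since $\psi_{a,\lambda}$ becomes an exact solution of the flat Yamabe equation once $\gamma_nG_a^{\frac2{2-n}}$ is replaced by $\vert x\vert^2$, the standard bubble expansion (cf.\ \cite{may-cv} and Lemma \ref{lem_interactions}) gives $L_{g_0}\varphi_j=c\,\varphi_j^{\frac{n+2}{n-2}}+E_j$ with $c>0$ dimensional, and, $L_{g_0}$ commuting with $d_{l,j}$,
\begin{equation*}
L_{g_0}\phi_{l,j}=d_{l,j}L_{g_0}\varphi_j=c\,\tfrac{n+2}{n-2}\,\varphi_j^{\frac4{n-2}}\phi_{l,j}+\tilde E_{l,j},\qquad \Vert\tilde E_{l,j}\Vert_{L^{\frac{2n}{n+2}}}=O(\lambda_j^{2-n}+\lambda_j^{-2}),
\end{equation*}
the remainder stemming from the positive mass term $H_{s,a}=O(r_a)$ and the conformal normal coordinate expansion of $g_a$. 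Hence, by H\"older and $\Vert\nu_1\Vert_{L^{\frac{2n}{n-2}}}\lesssim\Vert\nu_1\Vert$, $\langle\nu_1,\phi_{l,j}\rangle_{L_{g_0}}=c\tfrac{n+2}{n-2}\int\nu_1\varphi_j^{\frac4{n-2}}\phi_{l,j}+O((\lambda_j^{2-n}+\lambda_j^{-2})\Vert\nu_1\Vert)$. Near $a_j$ one has $u=\alpha^i\varphi_i+v\simeq\alpha_j\varphi_j$ and $K\simeq K_j$, so that the number $c':=(K_j\alpha_j^{\frac4{n-2}})^{-1}>0$ (bounded above and below in $V(0,p,\varepsilon)$, cf.\ Definition \ref{def_V(omega,p,e)}) makes $\varphi_j^{\frac4{n-2}}$ and $c'Ku^{\frac4{n-2}}$ agree to leading order near $a_j$; Taylor expanding $K$ at $a_j$, estimating the other bubbles by items (v)--(vii) of Lemma \ref{lem_interactions} and the $v$-term by H\"older and Sobolev yields
\begin{equation*}
\Big\vert\int\big(\varphi_j^{\frac4{n-2}}-c'Ku^{\frac4{n-2}}\big)\nu_1\phi_{l,j}\Big\vert\lesssim\Big(\tfrac{\vert\nabla K_j\vert}{\lambda_j}+\tfrac1{\lambda_j^2}+\tfrac1{\lambda_j^{n-2}}+\sum_{i\neq j}\varepsilon_{i,j}+\Vert v\Vert\Big)\Vert\nu_1\Vert.
\end{equation*}
Since $\int Ku^{\frac4{n-2}}\nu_1\phi_{l,j}=0$, the last two estimates combine to the bound claimed in (i) for $\omega=0$.

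For $\omega>0$ one adjoins $u_{\alpha,\beta}$ and the $\partial_{\beta_i}u_{\alpha,\beta}$, which approximate $\omega$ and the $\mathrm{e}_i$ respectively with $\Vert h(\alpha,\beta)\Vert=O(\vert\alpha\vert^2+\vert\beta\vert^2)$. As $\omega$ solves $L_{g_0}\omega=K\omega^{\frac{n+2}{n-2}}$ exactly and $\{\omega,\mathrm{e}_i\}$ is $L_{g_0}$-orthonormal in $\ker\partial^{2}J(\omega)$, the enlarged Gram matrix stays uniformly invertible, the only new entries being the overlaps of a fixed bounded profile against the concentrating bubble $\varphi_i$, of size $O(\lambda_i^{-\frac{n-2}{2}})$; this same power also bounds the analogue of $E_j$ and dominates the earlier $\lambda_i^{-2},\lambda_i^{2-n}$ terms, whence the power $\lambda_i^{-(n-2)/2}$ in (ii). Repeating the computation of the preceding paragraph verbatim (now also pairing with $u_{\alpha,\beta}$ and $\partial_{\beta_i}u_{\alpha,\beta}$, using $L_{g_0}u_{\alpha,\beta}\simeq Ku_{\alpha,\beta}^{\frac{n+2}{n-2}}$ and the orthogonalities $\int Ku^{\frac4{n-2}}\nu_1u_{\alpha,\beta}=\int Ku^{\frac4{n-2}}\nu_1\partial_{\beta_i}u_{\alpha,\beta}=0$) gives (ii). The converse statements are proved symmetrically: for $\nu_2\in\langle\cdot\rangle^{\perp_{L_{g_0}}}$ one expands $\Pi^{\top_{Ku^{\frac4{n-2}}}}_{\langle\cdot\rangle}\nu_2=\sum_{k,i}c_{k,i}\phi_{k,i}$ (enlarged by the $\omega$-part when $\omega>0$), the coefficients solving the system with Gram matrix $\big(\langle\phi_{k,i},\phi_{l,j}\rangle_{Ku^{\frac4{n-2}}}\big)$, block diagonal and uniformly invertible by Lemma \ref{lem_interactions}(ii) with $K$ frozen at $K_j$ (the freezing error again of the stated size); and $\int Ku^{\frac4{n-2}}\nu_2\phi_{l,j}=\int\big(Ku^{\frac4{n-2}}-\tfrac1{c'}\varphi_j^{\frac4{n-2}}\big)\nu_2\phi_{l,j}+\tfrac1{c'}\int\varphi_j^{\frac4{n-2}}\nu_2\phi_{l,j}$, whose first summand is controlled by the same error list and whose second summand, via the expansion of $L_{g_0}\phi_{l,j}$ above, is a multiple of $\langle\nu_2,\phi_{l,j}\rangle_{L_{g_0}}-\int\nu_2\tilde E_{l,j}=-\int\nu_2\tilde E_{l,j}=O((\lambda_j^{2-n}+\lambda_j^{-2})\Vert\nu_2\Vert)$.

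The only genuinely delicate point is the bookkeeping in the substitutions $\varphi_j^{\frac4{n-2}}\leftrightarrow c'Ku^{\frac4{n-2}}$ and $\varphi_j^{\frac4{n-2}}\phi_{l,j}\leftrightarrow L_{g_0}\phi_{l,j}$ near each concentration point: no individual estimate is hard, but the Taylor remainder of $K$, the conformal normal coordinate and positive mass corrections inside $L_{g_0}\varphi_j$, the bubble interactions $\varepsilon_{i,j}$ and the $v$-contribution must be carried simultaneously through H\"older and the Sobolev embedding, and one has to check that they collapse exactly into $\tfrac{\vert\nabla K_i\vert}{\lambda_i}+\tfrac1{\lambda_i^2}+\tfrac1{\lambda_i^{n-2}}+\sum_{j\neq i}\varepsilon_{i,j}+\Vert v\Vert$ for $\omega=0$ and into $\tfrac{\vert\nabla K_i\vert}{\lambda_i}+\tfrac{1}{\lambda_i^{\frac{n-2}{2}}}+\sum_{j\neq i}\varepsilon_{i,j}+\Vert v\Vert$ for $\omega>0$, the change of regime being exactly where the stated powers are pinned down.
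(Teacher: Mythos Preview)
Your proof is correct and follows essentially the same approach as the paper: both reduce to bounding the pairings $\langle\nu_1,\phi_{l,j}\rangle_{L_{g_0}}$ via the approximate identity $L_{g_0}\phi_{l,j}=c_l\varphi_j^{\frac4{n-2}}\phi_{l,j}+O(\lambda_j^{-2}+\lambda_j^{2-n})$ in $L^{\frac{2n}{n+2}}$, then replace $\varphi_j^{\frac4{n-2}}$ by a multiple of $Ku^{\frac4{n-2}}$ (with the stated error list) so that the given orthogonality kills the main term. The only cosmetic difference is that the paper writes $\langle\nu_1,(L_{g_0}-\alpha Ku^{\frac4{n-2}})\phi_{k,i}\rangle_{L^2}$ for an arbitrary $\alpha$ and chooses $\alpha$ at the end, whereas you split the same cancellation into two explicit substitutions; note also that the constant in $L_{g_0}\phi_{l,j}\approx c_l\varphi_j^{\frac4{n-2}}\phi_{l,j}$ depends on $l$ (it is $c$ for $l=1$ and $c\tfrac{n+2}{n-2}$ for $l=2,3$), a harmless slip in your write-up.
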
 
The aforegoing Lemma will help us to carry over several estimates from \cite{may-cv}, which was based 
on a representation $u=\alpha^{i}\varphi_{i}+v$ with orthogonalities
\begin{equation*}
\langle \phi_{k,i},v\rangle_{Ku^{\frac{4}{n-2}}}=0
\end{equation*}
from  the first minimisation problem in Proposition \ref{prop_optimal_choice}.

\begin{proposition}\label{prop_positivity_of_D2J}
There exist $\gamma, \varepsilon_{0}>0$ such,  that for any $0<\varepsilon<\varepsilon_{0}$ and
$$
u=\alpha^{i}\varphi_{i}+v\in V(p,\varepsilon) 
$$
there holds 
$
\partial^{2}J(\alpha^{i}\varphi_{i})\lfloor_{H}>\gamma
$ 
for 
$H=H_{u}(p, \varepsilon).
$
\end{proposition}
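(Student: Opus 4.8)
The plan is to reduce the estimate to the classical coercivity of the linearised critical operator at a standard bubble. Write $\bar u=\alpha^{i}\varphi_{i}$ and fix $v\in H=H_{u}(p,\varepsilon)$. First I would insert $v=w$ into Proposition~\ref{prop_derivatives_of_J}(ii), which gives
\begin{equation*}
\begin{split}
\tfrac12\partial^{2}J(\bar u)vv
={}&
\tfrac{1}{k^{\frac{n-2}{n}}}\Big[\int L_{g_{0}}vv-\tfrac{n+2}{n-2}\tfrac{r}{k}\int K\bar u^{\frac{4}{n-2}}vv\Big]
\\
&-\tfrac{4}{k^{\frac{n-2}{n}+1}}\int L_{g_{0}}\bar u\,v\int K\bar u^{\frac{n+2}{n-2}}v
+\tfrac{c_{n}\,r}{k^{\frac{n-2}{n}+2}}\Big(\int K\bar u^{\frac{n+2}{n-2}}v\Big)^{2},
\end{split}
\end{equation*}
with $r=r_{\bar u}$, $k=k_{\bar u}$. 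Since $v$ is orthogonal --- in the $L_{g_{0}}$- resp.\ $Ku^{4/(n-2)}$-product, depending on the chosen minimisation --- to each $\varphi_{i}=\phi_{1,i}$, and since by construction of the bubbles $L_{g_{0}}\varphi_{i}$ is a dimensional constant times $\varphi_{i}^{(n+2)/(n-2)}$ up to lower order terms, the interaction estimates of Lemma~\ref{lem_interactions} together with Lemma~\ref{lem_comparing_orthogonalities} force $|\int L_{g_{0}}\bar u\,v|+|\int K\bar u^{(n+2)/(n-2)}v|=o_{\varepsilon}(1)\Vert v\Vert$. Hence the last two terms are $o_{\varepsilon}(1)\Vert v\Vert^{2}$, and it suffices to bound the quadratic form $Q(v)=\int L_{g_{0}}vv-\tfrac{n+2}{n-2}\tfrac{r}{k}\int K\bar u^{4/(n-2)}vv$ from below by $\gamma_{0}\Vert v\Vert^{2}$.

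Next I would prove $Q(v)\ge\gamma_{0}\Vert v\Vert^{2}$ uniformly on $V(p,\varepsilon)$, reducing first to a single bubble: with a partition of unity subordinate to the concentration regions of the $\varphi_{i}$ and items (v)--(vii) of Lemma~\ref{lem_interactions}, the contribution of distinct bubbles to $\int K\bar u^{4/(n-2)}vv$ is $O(\sum_{i\neq j}\varepsilon_{i,j}\Vert v\Vert^{2})=o_{\varepsilon}(1)\Vert v\Vert^{2}$. For a single $\varphi_{i}$, in conformal normal coordinates around $a_{i}$ the bubble equals $u_{a_{i}}$ times the Euclidean model up to a harmless remainder stemming from $H_{s,a_{i}}$, while the defining inequality $|1-\tfrac{r\alpha_{i}^{4/(n-2)}K(a_{i})}{4n(n-1)k}|<\varepsilon$ of $A_{u}(p,\varepsilon)$, cf.\ Definition~\ref{def_V(omega,p,e)}, together with Lemma~\ref{lem_interactions} forces $\tfrac{n+2}{n-2}\tfrac{r}{k}K\bar u^{4/(n-2)}$ to agree on that region, modulo $o_{\varepsilon}(1)$, with the critical potential of the linearised Yamabe equation on $\R^{n}$. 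For that model operator it is classical, cf.\ \cite{BahriCoronCriticalExponent}, that it is nonnegative, that its unique nonpositive direction is the bubble itself and its kernel is spanned by the dilation and translation derivatives --- that is, precisely (the Euclidean models of) $\phi_{1,i},\phi_{2,i},\phi_{3,i}$ --- and that on the orthogonal complement of their span it is bounded below by a fixed positive multiple of the $L_{g_{0}}$-form. Transplanting this to $M$, summing over $i$ and invoking the orthogonality defining $H=\langle\phi_{k,i}\rangle^{\perp_{\cdot}}$ yields $Q(v)\ge(\gamma_{0}-o_{\varepsilon}(1))\Vert v\Vert^{2}$.

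Finally I would collect the errors. Each discrepancy appearing above --- between the $Ku^{4/(n-2)}$- and $L_{g_{0}}$-orthogonalities (Lemma~\ref{lem_comparing_orthogonalities}), between $\bar u^{4/(n-2)}$ and $\sum_{i}\alpha_{i}^{4/(n-2)}\varphi_{i}^{4/(n-2)}$, the conformal factors $u_{a_{i}}$, the remainders $H_{s,a_{i}}$ and the inter-bubble terms --- is controlled by $\big(|\nabla K_{i}|/\lambda_{i}+\lambda_{i}^{-1}+\sum_{j\neq i}\varepsilon_{i,j}+\Vert v\Vert\big)\Vert v\Vert^{2}$; on $V(p,\varepsilon)$ the first three summands are $<C\varepsilon$ and $\Vert v\Vert$ is bounded by the fixed small constant coming from Proposition~\ref{prop_optimal_choice}, so shrinking both this constant and $\varepsilon_{0}$ makes the total error $\le\tfrac12\gamma_{0}$; then $\gamma=\tfrac12\gamma_{0}k^{-(n-2)/n}$ does the job. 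I expect the main obstacle to be exactly this bookkeeping: one has to verify that the spectral gap of the model operator survives all of these perturbations \emph{uniformly} as $\lambda_{i}\to\infty$, i.e.\ as the bubbles degenerate, which is what the quantitative estimates of Lemma~\ref{lem_interactions} and the comparison Lemma~\ref{lem_comparing_orthogonalities} are designed to supply.
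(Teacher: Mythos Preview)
Your proposal is correct and follows precisely the classical approach the paper has in mind; the paper itself does not give a proof but simply declares this positivity ``well known'' (pointing to the standard spectral-gap argument for the linearised critical operator at a bubble, as in \cite{BahriCoronCriticalExponent}) and remarks that the two choices of orthogonality are equivalent via Lemma~\ref{lem_comparing_orthogonalities}.
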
 
This positivity property is well known in either case
\begin{equation*}
H_{u}(p, \varepsilon)
=
\langle 
\phi_{k,i}
\rangle
^{\perp_{Ku^{\frac{4}{n-2}}}}
\; \text{ or }\;
H_{u}(p, \varepsilon)
=
\langle 
\phi_{k,i}
\rangle
^{\perp_{L_{g_{0}}}}
\end{equation*}
and evidently one case follows from the other by virtue of Lemma \ref{lem_comparing_orthogonalities}.
Likewise in case $u\in V(\omega,p,\varepsilon)$, cf.  Proposition \ref{I-prop_decomposing the second variation_f} from  \cite{may-cv}. 


\begin{proposition}\label{prop_decomposing the second variation_f}
There exist $\gamma, \varepsilon_{0}>0$ such,  that for any 
$$
u=u_{\alpha,\beta}+\alpha^{i}\varphi_{i}+v\in V(\omega,p,\varepsilon) 
$$
with $0<\varepsilon<\varepsilon_{0}$ we may decompose 
\begin{equation*}\begin{split}
H_{u}(\omega, p, \varepsilon)=H=H_{+}\oplus_{L_{g_{0}}} H_{-}\;\text{ with }\;\dim H_{-}<\infty
\end{split}\end{equation*}
and for any $h_{+}\in H_{+},h_{-}\in H_{-}$ there holds
\begin{enumerate}[label=(\roman*)]
 \item \quad 
$\partial^{2}J(u_{\alpha, \beta}+\alpha^{i}\varphi_{i})\lfloor_{H_{+}}>\gamma$ 
and
$\partial^{2}J(u_{\alpha, \beta}+\alpha^{i}\varphi_{i})\lfloor_{H_{-}}<-\gamma$
 \item \quad 
$\partial^{2}J(u_{\alpha, \beta}+ \alpha^{i}\varphi_{i})h_{+}h_{-}=o_{\varepsilon}(\Vert h_{+} \Vert \Vert h_{-}\Vert)$.
 \end{enumerate}
\end{proposition}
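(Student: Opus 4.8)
The plan is to work on the finite‑dimensional space $H=H_u(\omega,p,\varepsilon)$, reduce the analysis of the bilinear form $\partial^2J(u_{\alpha,\beta}+\alpha^i\varphi_i)$ on $H$ to a model situation where everything is explicit, and then transport the splitting back. First I would exploit Proposition \ref{prop_optimal_choice} and the fact that $u\in V(\omega,p,\varepsilon)$ to write $u=u_{\alpha,\beta}+\alpha^i\varphi_i+v$ with $v$ small in the $\Vert\cdot\Vert$‑norm and $v$ orthogonal (in either of the two scalar products) to the span $\langle u_{\alpha,\beta},\partial_{\beta_i}u_{\alpha,\beta},\phi_{k,i}\rangle$; then the quadratic form on $H$ only involves $\partial^2J$ evaluated at the approximate critical point $u_{\alpha,\beta}+\alpha^i\varphi_i$. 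By Lemma \ref{lem_comparing_orthogonalities} it is enough to treat one of the two orthogonality conventions, say the $L_{g_0}$‑orthogonal one, since the two spaces differ by an operator of size $o_\varepsilon(1)$ in norm, and a uniformly definite (and uniformly block‑decomposed) quadratic form stays so under such a small perturbation.

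Next I would isolate the sources of negative directions. On the ``bubble tower'' part, the explicit interaction estimates of Lemma \ref{lem_interactions}, together with the standard expansion of $\partial^2J$ around a sum of standard bubbles (as carried out in \cite{may-cv}, see Proposition \ref{I-prop_decomposing the second variation_f}, and in the classical references \cite{BahriCriticalPointsAtInfinity},\cite{BahriCoronCriticalExponent}), show that $\partial^2J(\alpha^i\varphi_i)$ restricted to $H_u(p,\varepsilon)$ is uniformly positive (Proposition \ref{prop_positivity_of_D2J}); the only extra directions present when $\omega>0$ come from $u_{\alpha,\beta}$ and from the finitely many kernel directions $\mathrm{e}_i$ of $\partial^2J(\omega)$, i.e. from $\partial_{\beta_i}u_{\alpha,\beta}$. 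On these extra directions the form of $\partial^2J(\omega)$ is explicitly computable from Proposition \ref{prop_derivatives_of_J}: since $\omega$ solves $L_{g_0}\omega=K\omega^{(n+2)/(n-2)}$ one has $r/k=4n(n-1)$ after normalisation and the Hessian becomes $\frac{2}{k^{(n-2)/n}}\bigl(\mathrm{Id}-\tfrac{n+2}{n-2}K\omega^{4/(n-2)}\cdot\bigr)$ up to rank‑two corrections coming from the $\int L_{g_0}\omega v\int K\omega^{(n+2)/(n-2)}w$ terms, which vanish on the $L_{g_0}$‑orthogonal complement of $\omega$. Thus $H$ splits, up to errors $o_\varepsilon(\Vert\cdot\Vert^2)$, as the orthogonal direct sum of the genuinely positive bubble part, a positive $\omega$‑part, and a \emph{finite}‑dimensional piece spanned by the kernel directions $\mathrm{e}_i$ where $\partial^2J(\omega)$ is non‑positive; on a small perturbation of the latter one picks out, by elementary linear algebra on a uniformly bounded number of eigenvalues, a finite‑dimensional $H_-$ on which the form is $<-\gamma$ and a complementary $H_+$ on which it is $>\gamma$. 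Setting $H_+\oplus_{L_{g_0}}H_-$ from the $L_{g_0}$‑orthogonal decomposition of $H$ adapted to these eigenspaces gives (i).

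For assertion (ii), the mixed term $\partial^2J(u_{\alpha,\beta}+\alpha^i\varphi_i)h_+h_-$ is $o_\varepsilon(\Vert h_+\Vert\Vert h_-\Vert)$ because $H_\pm$ are chosen as (perturbations of) eigenspaces of the \emph{limiting} operator $\partial^2J(\omega)\oplus(\text{bubble operator})$ which are mutually $L_{g_0}$‑orthogonal, so the cross term of the limiting form vanishes identically and only the $o_\varepsilon(1)$ discrepancy between $\partial^2J(u_{\alpha,\beta}+\alpha^i\varphi_i)$ and this limiting operator survives; this discrepancy is controlled exactly by the interaction quantities $\lambda_i^{-1},\varepsilon_{i,j},\vert\nabla K_i\vert/\lambda_i,\Vert v\Vert$, all $o_\varepsilon(1)$ on $V(\omega,p,\varepsilon)$, via Lemma \ref{lem_interactions} and Lemma \ref{lem_comparing_orthogonalities}. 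I expect the main obstacle to be bookkeeping the cross terms between the $u_{\alpha,\beta}$‑block and the bubble block — one must check that $\partial^2J$ evaluated on one factor from each block is genuinely negligible, which requires the estimates $\int\varphi_i^{(n+2)/(n-2)}\phi_{k,j}=O(\varepsilon_{i,j})$ and the smallness of $\int K\omega^{4/(n-2)}\varphi_i^2$‑type interactions between a fixed function $\omega$ and a highly concentrated bubble; this is where the decay rates in Lemma \ref{lem_interactions}(iv)–(v) and the conformal‑normal‑coordinate structure of $\varphi_i$ are used, and it is essentially the content already established in \cite{may-cv}, so the proof largely reduces to invoking Proposition \ref{I-prop_decomposing the second variation_f} and upgrading the orthogonality via Lemma \ref{lem_comparing_orthogonalities}.
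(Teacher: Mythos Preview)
Your final conclusion --- that the proof reduces to invoking Proposition \ref{I-prop_decomposing the second variation_f} from \cite{may-cv} and transferring the orthogonality convention via Lemma \ref{lem_comparing_orthogonalities} --- is correct and is exactly what the paper does in the main text. However, the detailed sketch you give before reaching that conclusion contains several genuine errors that would derail an independent proof.

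First, $H=H_u(\omega,p,\varepsilon)$ is not finite-dimensional; it is the $L_{g_0}$-orthogonal complement in $W^{1,2}(M)$ of a finite-dimensional span, hence closed and infinite-dimensional. Second, and more substantively, you misidentify the source of $H_-$. You write that the negative piece is ``spanned by the kernel directions $\mathrm{e}_i$ where $\partial^2 J(\omega)$ is non-positive''. But the kernel directions $\mathrm{e}_j$ are precisely what gets \emph{removed} from $H$: by construction $H\perp_{L_{g_0}}\partial_{\beta_j}u_{\alpha,\beta}$, and $\partial_{\beta_j}u_{\alpha,\beta}=\mathrm{e}_j+o_\varepsilon(1)$. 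The space $H_-$ instead comes from the strictly negative eigenspace of $\partial^2 J(\omega)$, i.e.\ the Morse index directions of $\omega$ --- eigenfunctions $w_q$ with $\tfrac{n+2}{n-2}\mu_{w_q}>1$ for $K\omega^{4/(n-2)}w_q=\mu_{w_q}L_{g_0}w_q$. (Your side remark that $r/k=4n(n-1)$ for $\omega$ is also off; for a solution $\omega$ one has $r_\omega=k_\omega$.)

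The paper's full argument (a spectral proof, written out in the source appendix) decomposes $H$ according to the eigenvalues $\mu_{h_l}$ of $h\mapsto K\omega^{4/(n-2)}h$ relative to $L_{g_0}$ on $H$, compares these to the global spectrum $(\mu_{w_q})$ on $W^{1,2}$ via $(\mu_{w_q}-\mu_{h_l})\langle w_q,h_l\rangle_{L_{g_0}}=o_\varepsilon(1)$, and then the delicate step is showing that the \emph{near-kernel} slab $H_3=\langle h_l:\,|\tfrac{n+2}{n-2}\mu_{h_l}-1|\leq\varepsilon\rangle$ is trivial, precisely because any such $h_l$ would project onto $\langle\mathrm{e}_j\rangle$ up to $o_\varepsilon(1)$, contradicting $H\perp\partial_{\beta_j}u_{\alpha,\beta}$. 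Then $H_+$ collects the small-eigenvalue slabs (where bubble positivity from Proposition \ref{prop_positivity_of_D2J} wins) and $H_-$ the large-eigenvalue slab. Your sketch has the roles of kernel and negative index reversed and misses this $H_3=\{0\}$ step entirely.
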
 

The invertibility of the second variation on the orthogonal space, on which $v$ lives, then provides a priori estimates.

\begin{proposition}\label{prop_a-priori_estimate_on_v} 
For $\varepsilon>0$ small we have 
\begin{enumerate}[label=(\roman*)]
 \item \quad 
 $\Vert v \Vert
=
O(\sum_{r} \frac{\vert \nabla K_{r}\vert}{\lambda_{r}} + \frac{\vert \lap K_{r}\vert}{\lambda_{r}^{2}}
+
\lambda_{r}^{2-n} +\sum_{r\neq s}\varepsilon_{r,s}
+
\vert \partial J(u)\vert)$ on $V(p, \varepsilon)$
\item \quad 
$\Vert v \Vert
= 
O
(
\sum_{r} \frac{\vert \nabla K_{r}\vert}{\lambda_{r}}
+
\lambda_{r}^{\frac{2-n}{2}} 
+
\sum_{r\neq s}\varepsilon_{r,s}
+
\vert \partial  J(u)\vert)$
on $V(\omega, p, \varepsilon)$  
\end{enumerate}
\end{proposition}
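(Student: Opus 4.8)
The plan is to deduce for $v$ a coercive quadratic inequality from the orthogonality defining it, and invert it. Put $w=u_{\alpha,\beta}+\alpha^{i}\varphi_{i}$, with $u_{\alpha,\beta}=0$ when $\omega=0$, so that $u=w+v$ with $v\in H_{u}(\omega,p,\varepsilon)$, the orthogonal space furnished by Proposition \ref{prop_optimal_choice}. Since $J\in C^{2,\alpha}_{loc}$ with $\partial^{2}J$ uniformly H\"older on the sets $U_{\varepsilon}$, cf.\ Proposition \ref{prop_derivatives_of_J}, I would first record for every $h\in H_{u}(\omega,p,\varepsilon)$ the Taylor identity
\begin{equation*}
\partial^{2}J(w)\,v\,h=\partial J(u)h-\partial J(w)h-O(\Vert v\Vert^{1+\alpha}\Vert h\Vert),
\end{equation*}
in which $|\partial J(u)h|\le|\partial J(u)|\,\Vert h\Vert$ is already of the desired form and the last term is absorbable once $\Vert v\Vert<\varepsilon$ is small. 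Everything then rests on two ingredients: coercivity of $\partial^{2}J(w)$ on $H_{u}$, and smallness of the force $\partial J(w)h$.

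For the coercivity I would invoke Proposition \ref{prop_positivity_of_D2J} when $\omega=0$, which gives $\partial^{2}J(w)\lfloor_{H_{u}(p,\varepsilon)}>\gamma$ outright, so that taking $h=v$ in the identity above and absorbing the $O(\Vert v\Vert^{1+\alpha})$-term yields $\gamma\Vert v\Vert\le|\partial J(u)|+\Vert\partial J(w)\lfloor_{H_{u}}\Vert+o(\Vert v\Vert)$. When $\omega>0$ I would use the splitting $H_{u}=H_{+}\oplus_{L_{g_{0}}}H_{-}$, $\dim H_{-}<\infty$, of Proposition \ref{prop_decomposing the second variation_f}, decompose $v=v_{+}+v_{-}$, test the identity with $h=v_{+}$ and with $h=-v_{-}$, use the definite signs of $\partial^{2}J(w)$ on $H_{\pm}$ together with the near-orthogonality $\partial^{2}J(w)v_{+}v_{-}=o_{\varepsilon}(\Vert v_{+}\Vert\Vert v_{-}\Vert)$ to decouple, and add the two resulting inequalities, absorbing the cross term by Young. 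Either way one reaches $\gamma\Vert v\Vert\lesssim|\partial J(u)|+\Vert\partial J(w)\lfloor_{H_{u}}\Vert+o(\Vert v\Vert)$, and it remains to estimate the force.

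The estimate $\Vert\partial J(w)\lfloor_{H_{u}}\Vert\le C\,E$, with $E$ the error sum on the right of (i), resp.\ (ii), is the technical core and the step I expect to be the main obstacle. Starting from $\tfrac12\partial J(w)h=k^{(2-n)/n}\int(L_{g_{0}}w-\tfrac rk Kw^{(n+2)/(n-2)})h$ (Proposition \ref{prop_derivatives_of_J}) I would expand $L_{g_{0}}w=\alpha^{i}L_{g_{0}}\varphi_{i}+L_{g_{0}}u_{\alpha,\beta}$ and split $w^{(n+2)/(n-2)}$ into its single-bubble contributions, its $u_{\alpha,\beta}$-contribution and cross terms. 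The leading pieces of $L_{g_{0}}w-\tfrac rk Kw^{(n+2)/(n-2)}$ lie --- up to errors governed by the $H_{s,a_{i}}$-part of the Green's function, by the defect $\Vert u_{\alpha,\beta}-\omega\Vert=O(|\alpha|^{2}+|\beta|^{2})$ of the pseudo-critical point together with $\Pi_{H_{0}(\omega)^{\perp}}\nabla J(u_{\alpha,\beta})=0$, and by the amplitude normalisations $|1-\tfrac{r\alpha_{i}^{4/(n-2)}K(a_{i})}{4n(n-1)k}|,|1-\tfrac{r\alpha^{4/(n-2)}}{k}|<\varepsilon$ built into $V(\omega,p,\varepsilon)$, cf.\ Definition \ref{def_V(omega,p,e)} --- in the span $\langle u_{\alpha,\beta},\partial_{\beta_{i}}u_{\alpha,\beta},\phi_{k,i}\rangle$ against which $h$ is orthogonal, hence are annihilated; what is left, upon pairing with $h$ and using the interaction estimates of Lemma \ref{lem_interactions}, is exactly the curvature corrections $\tfrac{|\nabla K_{r}|}{\lambda_{r}}+\tfrac{|\lap K_{r}|}{\lambda_{r}^{2}}$ from the Taylor expansion of $K$ about $a_{r}$ tested against $\varphi_{r}^{(n+2)/(n-2)}$, the mass term $\lambda_{r}^{2-n}$, the bubble interactions $\sum_{r\ne s}\varepsilon_{r,s}$, and --- only when $\omega>0$ --- the bubble/regular-part interaction, of size $\int\omega^{4/(n-2)}\varphi_{r}\sim\lambda_{r}^{(2-n)/2}$, which for $n\le5$ dominates $\tfrac{|\lap K_{r}|}{\lambda_{r}^{2}}+\lambda_{r}^{2-n}$ and is the reason (ii) carries $\lambda_{r}^{(2-n)/2}$ in their place. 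This computation is precisely the one carried out in \cite{may-cv} for the $\langle\cdot,\cdot\rangle_{Ku^{4/(n-2)}}$-orthogonality; the sole novelty here is that $H_{u}$ may instead be defined through the $\langle\cdot,\cdot\rangle_{L_{g_{0}}}$-orthogonality, which Lemma \ref{lem_comparing_orthogonalities} disposes of at once, since it bounds the discrepancy of the two projections of $\nabla J(w)$ onto $\langle u_{\alpha,\beta},\partial_{\beta_{i}}u_{\alpha,\beta},\phi_{k,i}\rangle$ again by $O(E\,\Vert\cdot\Vert)$. Plugging the force estimate into the coercivity inequality and dividing by $\Vert v\Vert$ then delivers (i) and (ii).
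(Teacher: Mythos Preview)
Your proposal is correct and follows essentially the same route as the paper: Taylor-expand $\partial J(u)h$ around $w=u_{\alpha,\beta}+\alpha^{i}\varphi_{i}$, feed in the coercivity of $\partial^{2}J(w)$ on $H_{u}$ from Propositions \ref{prop_positivity_of_D2J} and \ref{prop_decomposing the second variation_f} (with the $v_{\pm}$-splitting in the $\omega>0$ case), and control the force $\partial J(w)\lfloor_{H_{u}}$ by the error sum---the paper packages this last step as Propositions \ref{prop_derivatives_on_H} and \ref{prop_derivatives_on_H_f}, whose proofs are exactly the reduction via Lemma \ref{lem_comparing_orthogonalities} to the computation in \cite{may-cv} that you outline.
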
 
\begin{proof}
The statement for $V(p,\varepsilon)$  follows by expanding
$$
\partial J(u)v=\partial J(\alpha^{i}\varphi_{i}+v)v
$$
in $v$ and applying  Propositions  \ref{prop_positivity_of_D2J} and  \ref{prop_derivatives_on_H}.
Likewise the statement for  $V(\omega,p,\varepsilon)$ follows by expanding 
$$
\partial J(u)v_{\pm}=\partial J(u_{\alpha,\beta}+\alpha^{i}\varphi_{i}+v)v_{\pm}  
$$
in $v$ and applying Proposition  \ref{prop_derivatives_on_H_f} and \ref{prop_decomposing the second variation_f}, where we denote by
\begin{equation*}
v_{+}=\Pi^{\top_{L_{g_{0}}}}_{H_{+}}v
\; \text{ and }\; 
v_{-}=\Pi^{\top_{L_{g_{0}}}}_{H_{-}}v
\end{equation*}
the corresponding projections onto $H_{+}$ and $H_{-}$ in Proposition \ref{prop_decomposing the second variation_f}. 
\end{proof}

These estimates on $v$ are upon the appearance of $\vert  \partial J(u)\vert $ instead of $\vert \delta J(u)\vert$ the same as in \cite{may-cv}, cf. Corollaries \ref{I-cor_a-priori_estimate_on_v} and \ref{I-cor_a-priori_estimate_on_v_f} therein.  In fact in the latter work we had too graciously estimated against $\vert \delta J(u)\vert$ in many cases. In what follows we will simply give the correct statements without repeating the various proofs from \cite{may-cv}.

\subsection{The shadow flows}\label{sec_shadow_flow}
We recall some standard testings of the first variation
$$
\partial J(u)
=
\frac{2}{k^{\frac{n-2}{n}}}
[
\int L_{g_{0}}uv
-
\frac
{
r
}
{
k
}
\int Ku^{\frac{n+2}{n-2}}v
],
$$
cf. Proposition \ref{prop_derivatives_of_J}.  
\begin{proposition}\label{prop_simplifying_ski} 
For $u\in V(\omega,p, \varepsilon)$   and $\varepsilon>0$ sufficiently small let 
\begin{equation*}\begin{split}
\sigma_{k,i}=-\int (L_{g_{0}}u-r\K u^{\frac{n+2}{n-2}})\phi_{k,i}, \, i=1, \ldots,p, \,k=1,2,3.
\end{split}\end{equation*} 
Then in case $\omega=0$ we have  
with constants $b_{2}, \ldots,e_{4}>0$ 
\begin{enumerate}[label=(\roman*)]
 \item \quad 
$
\sigma_{2,i}
= 
d_{2}\alpha_{i}\frac{ H_{i}}{\lambda_{i} ^{n-2}}
+
e_{2}\frac{r\alpha_{i}^{\frac{n+2}{n-2}}}{k}\frac{\lap K_{i}}{\lambda_{i}^{2}} 
- 
b_{2}\frac{r\alpha_{i}^{\frac{4}{n-2}}K_{i}}{k}
\sum_{i \neq j=1}^{p}\alpha_{j}
\lambda_{i}\partial_{\lambda_{i}}\varepsilon_{i,j} 
$
 \item  \quad 
$      
\sigma_{3,i}
= 
\frac{r\alpha_{i}^{\frac{n+2}{n-2}}}{k}
[
 e_{3}\frac{\nabla K_{i}}{\lambda_{i}}
+
e_{4}\frac{\nabla \lap K_{i}}{\lambda_{i}^{3}}
] 
+
b_{3}\frac{r\alpha_{i}^{\frac{4}{n-2}}K_{i}}{k}
\sum_{i \neq j=1}^{p}
\frac{\alpha_{j}}{\lambda_{i}}\nabla_{a_{i}}\varepsilon_{i,j} 
$
\end{enumerate}
up to some 
$$
o_{\varepsilon}
(
 \lambda_{i}^{2-n} 
+
\sum_{i\neq j=1}^{p}\varepsilon_{i,j}
)
+
O
(
\sum_{r\neq s}\varepsilon_{r,s}^{2} 
+
\Vert v \Vert^{2}
+
\vert \partial  J(u)\vert^{2}
)  
,                              
$$
whereas  in case $\omega>0$ with  constants $d_{2},\ldots,b_{3}>0$
\begin{enumerate}[label=(\roman*)]
 \item \quad 
$
\sigma_{2,i}
= 
d_{2}\frac{r\alpha_{i}^{\frac{4}{n-2}}}{k}\frac{\alpha \omega_{i}}{\lambda_{i} ^{\frac{n-2}{2}}}
-b_{2} \frac{r\alpha_{i}^{\frac{4}{n-2}}K_{i}}{k}
\sum_{i\neq j =1}^{p}\alpha_{j}
\lambda_{i}\partial_{\lambda_{i}}\varepsilon_{i,j}
$
 \item \quad  
$
\sigma_{3,i}
= 
d_{3}\frac{r\alpha_{i}^{\frac{n+2}{n-2}}}{k}\frac{\nabla K_{i}}{\lambda_{i}}
+ b_{3}\frac{r\alpha_{i}^{\frac{4}{n-2}}K_{i}}{k}
\sum_{i\neq j =1}^{p}\alpha_{j}
\frac{1}{\lambda_{i}}\nabla_{a_{i}}\varepsilon_{i,j}
$
\end{enumerate}
 up to some 
$$
o_{\varepsilon}
(
\lambda_{i}^{\frac{2-n}{2}}+\sum_{i\neq j=1}^{p}\varepsilon_{i,j}
)
+
O
(
\sum_{r\neq s}\varepsilon_{r,s}^{2}
+
\Vert v \Vert^{2}
+ 
\vert \partial J(u)\vert^{2}
).
$$
\end{proposition}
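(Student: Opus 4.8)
Proposition \ref{prop_simplifying_ski} asserts explicit leading-order expansions of the quantities $\sigma_{k,i}=-\int(L_{g_{0}}u-r\K u^{\frac{n+2}{n-2}})\phi_{k,i}$ for $u\in V(\omega,p,\varepsilon)$, both in the case $\omega=0$ (bubble tower, no regular part) and $\omega>0$ (bubbling off a genuine solution).

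Here is how I would organize the argument.

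\medskip

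\emph{Step 1: Reduce to the model decomposition.} Write $u=u_{\alpha,\beta}+\alpha^{i}\varphi_{i}+v$ (with $u_{\alpha,\beta}=0$ when $\omega=0$), and insert this into $\sigma_{k,i}$. Using linearity of $L_{g_{0}}$ and $\partial J(u)v'=\tfrac{2}{k^{(n-2)/n}}\int(L_{g_{0}}u-r\K u^{\frac{n+2}{n-2}})v'$ from Proposition \ref{prop_derivatives_of_J}, the term involving $\phi_{k,i}$ paired against the $v$-part of $u$ is exactly $-\tfrac12 k^{(n-2)/n}\partial J(u)\phi_{k,i}$, hence $O(|\partial J(u)|)$; squared terms of this type land in the claimed error $O(|\partial J(u)|^{2})$ after the usual reabsorption argument. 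The $u_{\alpha,\beta}$ contributions, when $\omega>0$, are handled via $L_{g_{0}}\omega=K\omega^{(n+2)/(n-2)}$ and the smoothness estimates $\Vert h(\alpha,\beta)\Vert=O(|\alpha|^{2}+|\beta|^{2})$ together with $|1-\tfrac{r\alpha^{4/(n-2)}}{k}|<\varepsilon$; these produce the leading term $d_{2}\tfrac{r\alpha_{i}^{4/(n-2)}}{k}\tfrac{\alpha\omega_{i}}{\lambda_{i}^{(n-2)/2}}$ in the $\omega>0$, $k=2$ case (the pairing $\langle\omega,\phi_{2,i}\rangle_{Ku^{4/(n-2)}}$ type integral, evaluated by concentration), while for $k=3$ the analogous pairing is of lower order and the gradient term $d_{3}\tfrac{r\alpha_i^{(n+2)/(n-2)}}{k}\tfrac{\nabla K_i}{\lambda_i}$ dominates.

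\medskip

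\emph{Step 2: Expand the principal term $-\int(L_{g_{0}}(\alpha^{j}\varphi_{j})-r\K(\alpha^{j}\varphi_{j})^{\frac{n+2}{n-2}})\phi_{k,i}$.} This is the heart of the computation. Since $\varphi_{j}=\varphi_{a_j,\lambda_j}$ is built from the conformal normal coordinates $g_{a_j}=u_{a_j}^{4/(n-2)}g_0$, one has $L_{g_{0}}\varphi_j = K$-free almost-solution identities: $L_{g_0}\varphi_j$ equals $c_n$ times a standard-bubble term plus the mass/curvature corrections encoded in $H_{a_j}=H_{r,a_j}+H_{s,a_j}$. Pairing against $\phi_{k,i}=d_{k,i}\varphi_i$ and using the interaction estimates of Lemma \ref{lem_interactions}: the $i=j$ diagonal terms give, for $k=2$, the self-interaction producing $d_2\alpha_i\tfrac{H_i}{\lambda_i^{n-2}}$ (mass term, via $H_{a_i}(a_i)>0$ since $M\not\simeq\mathbb{S}^n$) and, after Taylor-expanding $\K$ around $a_i$ in conformal normal coordinates where the volume expansion is trivial, the curvature term $e_2\tfrac{r\alpha_i^{(n+2)/(n-2)}}{k}\tfrac{\Delta K_i}{\lambda_i^2}$; for $k=3$ the first Taylor coefficient survives and gives $e_3\tfrac{\nabla K_i}{\lambda_i}$ plus the third-order term $e_4\tfrac{\nabla\Delta K_i}{\lambda_i^3}$ (recall that for $n=5$ these two are the relevant orders, $\lambda_i^{-1}$ versus $\lambda_i^{-3}$, matching the stated error $o_\varepsilon(\lambda_i^{2-n}+\sum\varepsilon_{i,j})=o_\varepsilon(\lambda_i^{-3}+\dots)$). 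The off-diagonal $i\neq j$ terms, via Lemma \ref{lem_interactions}(iii) and (vii), yield $\mp b_{k}\tfrac{r\alpha_i^{4/(n-2)}K_i}{k}\sum_{j\neq i}\alpha_j d_{k,i}\varepsilon_{i,j}$, i.e. $\lambda_i\partial_{\lambda_i}\varepsilon_{i,j}$ for $k=2$ and $\tfrac1{\lambda_i}\nabla_{a_i}\varepsilon_{i,j}$ for $k=3$.

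\medskip

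\emph{Step 3: Control the remainders.} Collect everything not displayed. The $v$-quadratic terms $\int K|u|^{4/(n-2)-?}v\,\phi_{k,i}$ and $\int K\varphi_i^{4/(n-2)}v^2$-type contributions are $O(\Vert v\Vert^2)+O(|\partial J(u)|\Vert v\Vert)$, and using Proposition \ref{prop_a-priori_estimate_on_v} the latter is reabsorbed into $O(\Vert v\Vert^2+|\partial J(u)|^2+\sum\varepsilon_{r,s}^2+\dots)$. Interaction cross-terms of order $\varepsilon_{i,j}^2$ (from $\int\varphi_i^{4/(n-2)}\varphi_j^{(n+2)/(n-2)-?}\dots$ and the $\K$-nonlinearity expanded to second order in the bubble interaction) are $O(\sum_{r\neq s}\varepsilon_{r,s}^2)$; the subleading parts of the diagonal mass and curvature terms (e.g. $H_{s,a_i}$ versus $H_{r,a_i}$, or $\lambda_i^{-4}$ vs $\lambda_i^{-3}$ for $n=5$) land in $o_\varepsilon(\lambda_i^{2-n}+\sum\varepsilon_{i,j})$. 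One also invokes the near-normalization $|1-\tfrac{r\alpha_i^{4/(n-2)}K_i}{4n(n-1)k}|<\varepsilon$ to replace various $\alpha_i$-powers by their nominal values up to $o_\varepsilon$. This is essentially the computation of Proposition \ref{I-prop_simplifying_ski}-type statements in \cite{may-cv}, so I would cite that work for the routine parts and only spell out the places where the present normalization ($k=1$) or the specific form $K=1-|x|^4$ near $x_0$ (Condition \ref{Condition_on_K}(iv)) changes a coefficient.

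\medskip

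\emph{Main obstacle.} The delicate point is \emph{tracking the exact order of each term in dimension $n=5$} and verifying that the displayed list is genuinely complete to the stated precision — in particular that the mass term $\tfrac{H_i}{\lambda_i^{n-2}}=\tfrac{H_i}{\lambda_i^3}$, the curvature term $\tfrac{\Delta K_i}{\lambda_i^2}$, the gradient term $\tfrac{\nabla K_i}{\lambda_i}$, the third-derivative term $\tfrac{\nabla\Delta K_i}{\lambda_i^3}$, and the interaction terms $\varepsilon_{i,j}$ sit in the correct hierarchy and that nothing of intermediate order (e.g. a $\lambda_i^{-3}\ln\lambda_i$ from $n=4$-style expansions, which is absent here) has been dropped. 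The bookkeeping of $o_\varepsilon$ versus $O$ — i.e. which terms vanish as the concentration parameters degenerate and which are merely controlled in size — also requires care, and is exactly where the positive-mass input ($H_{a_i}(a_i)>0$) and the precise conformal-normal-coordinate expansion of $\K$ enter decisively.
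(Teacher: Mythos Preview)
Your proposal is correct and follows the same approach as the paper. In fact, the paper's proof of this proposition consists solely of the citation ``Cf. Corollaries \ref{I-cor_simplifying_ski} and \ref{I-cor_simplifying_ski_f} in \cite{may-cv}'', so your Steps 1--3 amount to a sketch of what those cited corollaries actually contain; your remark that you would ``cite that work for the routine parts'' is exactly what the paper does in its entirety here. One small clarification: the passage from the more detailed expansion (the paper's Proposition \ref{prop_analysing_ski}, with the factors $[\tfrac{r\alpha_i^{4/(n-2)}K_i}{k}-4n(n-1)]$) to the simplified form stated here goes through the observation $\sigma_{1,i}=O(|\partial J(u)|)$, which bounds those bracketed factors and turns their cross-products with the other small quantities into the squared errors you list --- your Step 1 gestures at this but could be stated more directly.
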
 
\begin{proof}
Cf. Corollaries \ref{I-cor_simplifying_ski} and \ref{I-cor_simplifying_ski_f} in \cite{may-cv}.
\end{proof}
So far and in contrast to \cite{may-cv} we have removed the appearance of $\vert \delta J\vert$. In fact only in the computation of the shadow flow, i.e. the description of the movements of $\alpha_{i},\lambda_{i}$ and $a_{i}$ this error term inevitably enters.   
  
\begin{proposition} \label{prop_the_shadow_flow}
For $u\in V(p, \varepsilon)$ with $\varepsilon>0$ small we have
\begin{enumerate}[label=(\roman*)] 
 \item \quad 
$
-\frac{\dot \lambda_{i}}{\lambda_{i}}
= 
\frac{r}{k}
[
\frac{d_{2}}{c_{2}}
\frac{ H_{i}}{\lambda_{i} ^{n-2}}
+
\frac{e_{2}}{c_{2}}\frac{\lap K_{i}}{K_{i}\lambda_{i} ^{2}} 
-
\frac{b_{2}}{c_{2}} \sum_{i \neq j=1}^{p}\frac{\alpha_{j}}{\alpha_{i}}
\lambda_{i}\partial_{\lambda_{i}}\varepsilon_{i,j} 
]
(1+o_{\frac{1}{\lambda_{i}}}(1)) 
$
 \item \quad 
$
\lambda_{i}\dot a_{i}
= 
\frac{r}{k}
[
\frac{e_{3}}{c_{3}}  \frac{\nabla K_{i}}{K_{i}\lambda_{i}}
+
\frac{e_{4}}{c_{3}}  \frac{\nabla \lap K_{i}}{K_{i}\lambda_{i}^{3}}
+
\frac{b_{3}}{c_{3}} \sum_{i \neq j=1}^{p}\frac{\alpha_{j}}{\alpha_{i}}
 \frac{1}{\lambda_{i}}\nabla_{a_{i}}\varepsilon_{i,j}
](1+o_{\frac{1}{\lambda_{i}}}(1)) 
$
\end{enumerate}
up to some $o_{\varepsilon}  ( \lambda_{i}^{2-n}  +\sum_{i\neq j=1}^{p}\varepsilon_{i,j})$ and 
\begin{equation*} 
O
(
\sum_{r\neq s} \frac{\vert \nabla K_{r}\vert^{2}}{\lambda_{r} ^{2}}
+
\frac{\vert \lap K_{r}\vert^{2}}{\lambda_{r}^{4}}
+
\lambda_{r}^{-2(n-2)}
+
\varepsilon_{r,s}^{2}
)
+ 
\begin{cases}
O
(
\vert \delta J(u)\vert^{2}
)
\; \text{ under } \eqref{Yamabe_flow} 
\\

O
(
\vert \partial  J(u)\vert^{2}
)
\; \text{ under } \eqref{gradient_flow} 
\end{cases}.
\end{equation*}
For $u\in V(\omega, p, \varepsilon)$ with $\varepsilon>0$ small we have 
\begin{enumerate}[label=(\roman*)]
 \item \quad 
$
-\frac{\dot \lambda_{i}}{\lambda_{i}}
= 
\frac{r}{k}
[
\frac{d_{2}}{c_{2}}
\frac{\alpha \omega_{i}}{\alpha_{i}K_{i}\lambda_{i} ^{\frac{n-2}{2}}}
-
\frac{b_{2}}{c_{2}}\sum_{i\neq j =1}^{p}\frac{\alpha_{j}}{\alpha_{i}}
\lambda_{i}\partial_{\lambda_{i}}\varepsilon_{i,j}
]
(1+o_{\frac{1}{\lambda_{i}}}(1))
$
 \item \quad 
$
\lambda_{i}\dot a_{i}
= 
\frac{r}{k}
[
\frac{d_{3}}{c_{3}}\frac{\nabla K_{i}}{K_{i}\lambda_{i}}
+
\frac{b_{3}}{c_{3}}
\sum_{i\neq j =1}^{p}\frac{\alpha_{j}}{\alpha_{i}}
\frac{1}{\lambda_{i}}\nabla_{a_{i}}\varepsilon_{i,j}
]
(1+o_{\frac{1}{\lambda_{i}}}(1))
$
\end{enumerate}
up to some $o_{\varepsilon}(\lambda_{i}^{\frac{2-n}{2}}+\sum_{i\neq j=1}^{p}\varepsilon_{i,j})$
 and 
 \begin{equation*}
O
(
\sum_{r\neq s}\frac{\vert \nabla K_{r}\vert^{2}}{\lambda_{r}^{2}}
+
\lambda_{r}^{2-n}
+
\varepsilon_{r,s}^{2}
)
+ 
\begin{cases}
O
(
\vert \delta J(u)\vert^{2}
)
\; \text{ under } \eqref{Yamabe_flow} 
\\
O
(
\vert \partial  J(u)\vert^{2}
)
\; \text{ under } \eqref{gradient_flow} 
\end{cases}.
\end{equation*}
\end{proposition}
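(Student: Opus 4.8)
The $L^{2}$-flow statement \eqref{Yamabe_flow} is, up to the replacement of $\vert\delta J\vert$ by $\vert\partial J\vert$ in the lower order remainder, the shadow flow computation of \cite{may-cv}; the new ingredient is to check that the strong gradient flow \eqref{gradient_flow} generates the \emph{same} dynamics for $\lambda_{i},a_{i}$. I would argue as follows. Along a fixed flow line take the optimal decomposition of Proposition \ref{prop_optimal_choice}, with respect to $\langle\cdot,\cdot\rangle_{Ku^{\frac{4}{n-2}}}$ when running \eqref{Yamabe_flow} and to $\langle\cdot,\cdot\rangle_{L_{g_{0}}}$ when running \eqref{gradient_flow}; write $\langle\cdot,\cdot\rangle_{*}$ for whichever is in force, so that
\[
u=u_{\alpha,\beta}+\alpha^{i}\varphi_{i}+v,\qquad \langle v,u_{\alpha,\beta}\rangle_{*}=\langle v,\partial_{\beta_{j}}u_{\alpha,\beta}\rangle_{*}=\langle v,\phi_{k,i}\rangle_{*}=0,
\]
the $u_{\alpha,\beta}$-orthogonalities being vacuous for $\omega=0$. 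Since by Proposition \ref{prop_optimal_choice} the parameters depend smoothly on $u$, I would differentiate these orthogonality relations in $t$, substitute $\partial_{t}\varphi_{i}=-\frac{\dot\lambda_{i}}{\lambda_{i}}\phi_{2,i}+\lambda_{i}\dot a_{i}\cdot\phi_{3,i}$ and $\partial_{t}u_{\alpha,\beta}=\dot\alpha\,\partial_{\alpha}u_{\alpha,\beta}+\dot\beta^{j}\partial_{\beta_{j}}u_{\alpha,\beta}$, and obtain a square linear system for $\dot\alpha,\dot\beta^{k},\dot\alpha_{i},\dot\lambda_{i}/\lambda_{i},\lambda_{i}\dot a_{i}$ whose right hand side is assembled from $\langle\partial_{t}u,\phi_{k,i}\rangle_{*}$ (and the analogous pairings against $u_{\alpha,\beta},\partial_{\beta_{j}}u_{\alpha,\beta}$), corrected by the lower order pieces $\langle v,\partial_{t}\phi_{k,i}\rangle_{*}$ and the weight-derivative term $\langle v,\phi_{k,i}\rangle_{\partial_{t}*}$.

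First I would pin down the right hand side. Under \eqref{Yamabe_flow}, using $Ru^{\frac{n+2}{n-2}}=L_{g_{0}}u$, one gets $\langle\partial_{t}u,\phi_{k,i}\rangle_{Ku^{\frac{4}{n-2}}}=-\int(L_{g_{0}}u-r\K u^{\frac{n+2}{n-2}})\phi_{k,i}=\sigma_{k,i}$. Under \eqref{gradient_flow}, $\langle\partial_{t}u,\phi_{k,i}\rangle_{L_{g_{0}}}=-\frac{r}{2k}[\partial J(u)\phi_{k,i}-c\,\langle u,\phi_{k,i}\rangle_{L_{g_{0}}}]$ with $c=k^{-1}\int Ku^{\frac{n+2}{n-2}}\nabla J(u)$, where $\partial J(u)\phi_{k,i}=-2k^{\frac{2-n}{n}}\sigma_{k,i}$ by Proposition \ref{prop_derivatives_of_J}, while the correction $c\,\langle u,\phi_{k,i}\rangle_{L_{g_{0}}}$ is negligible --- for $k=1$ because $\partial J(u)u=0$, for $k=2,3$ because $\lambda_{i}\partial_{\lambda_{i}}\Vert\varphi_{i}\Vert^{2}=O(\lambda_{i}^{2-n})$ by Lemma \ref{lem_interactions}. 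Hence in either case the leading right hand side is $\sigma_{k,i}$ up to a fixed positive constant, and I would insert its explicit form from Proposition \ref{prop_simplifying_ski}. The term $\langle v,\partial_{t}\phi_{k,i}\rangle_{*}$ is $O(\Vert v\Vert(\vert\dot\lambda_{i}/\lambda_{i}\vert+\vert\lambda_{i}\dot a_{i}\vert))$ by Lemma \ref{lem_interactions}(i) and gets moved to the left as an $O(\Vert v\Vert)$ perturbation of the matrix. The weight-derivative term is exactly where the $\vert\delta J\vert$ versus $\vert\partial J\vert$ dichotomy is born: under \eqref{Yamabe_flow} one has $\partial_{t}(Ku^{\frac{4}{n-2}})=-\frac{4}{n-2}(R-\frac{r}{k}K)u^{\frac{4}{n-2}}$, so this contributes $O(\vert\delta J(u)\vert\,\Vert v\Vert)=O(\vert\delta J(u)\vert^{2}+\Vert v\Vert^{2})$, whereas under \eqref{gradient_flow} the corresponding object is built from $\nabla J(u)=L_{g_{0}}^{-}\partial J(u)$ and contributes $O(\vert\partial J(u)\vert^{2}+\Vert v\Vert^{2})$.

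Next I would invert the system. By Lemma \ref{lem_interactions}(ii),(iv),(vii) together with $Ku^{\frac{4}{n-2}}\approx K_{i}\alpha_{i}^{\frac{4}{n-2}}\varphi_{i}^{\frac{4}{n-2}}$ near $a_{i}$ --- and, for \eqref{gradient_flow}, Lemma \ref{lem_comparing_orthogonalities} to transfer between the $Ku^{\frac{4}{n-2}}$- and $L_{g_{0}}$-conventions --- the coefficient matrix is block diagonal up to $o_{\frac{1}{\lambda_{i}}}(1)$, with diagonal blocks $\alpha_{i}^{\frac{n+2}{n-2}}K_{i}\,\mathrm{diag}(c_{1},c_{2},c_{3})$, and the $\dot\alpha,\dot\beta^{k},\dot\alpha_{i}$ columns decouple from the $\dot\lambda_{i}/\lambda_{i},\lambda_{i}\dot a_{i}$ ones to the claimed order, since the pairings $\langle\varphi_{j},\phi_{2,i}\rangle_{*},\langle\varphi_{j},\phi_{3,i}\rangle_{*}$ and the $u_{\alpha,\beta}$-pairings against $\phi_{2,i},\phi_{3,i}$ are $o_{\varepsilon}(1)$. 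Solving the $k=2$ and $k=3$ equations for $\dot\lambda_{i}/\lambda_{i}$ and $\lambda_{i}\dot a_{i}$, dividing by $\alpha_{i}^{\frac{n+2}{n-2}}K_{i}c_{k}$, feeding in Proposition \ref{prop_simplifying_ski}, and using $\frac{r\alpha_{i}^{\frac{4}{n-2}}K_{i}}{4n(n-1)k}=1+o_{\varepsilon}(1)$ from the definition of $A_{u}(\omega,p,\varepsilon)$ to tidy the prefactors (this is where the combinatorial constants fold into $d_{2}/c_{2},e_{2}/c_{2},\ldots$) delivers the asserted identities; the residual pieces of $\sigma_{k,i}$ and the bound $\Vert v\Vert^{2}=O(\cdots+\vert\partial J(u)\vert^{2})$ from Proposition \ref{prop_a-priori_estimate_on_v} combine into the stated $o_{\varepsilon}(\cdots)+O(\cdots)$ remainders, with the $\vert\delta J\vert^{2}$ resp.\ $\vert\partial J\vert^{2}$ alternative inherited from the weight term.

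I expect the main obstacle to be the inversion step: confirming that the $\dot\alpha,\dot\beta,\dot\alpha_{i}$ directions genuinely decouple from $\dot\lambda_{i},\dot a_{i}$ at the stated precision, so that no cross term pollutes the leading coefficients $d_{2}/c_{2},e_{2}/c_{2},d_{3}/c_{3},e_{3}/c_{3},e_{4}/c_{3}$ --- this relies on the smallness of the off-diagonal interactions in Lemma \ref{lem_interactions}(iv)--(vi) and, for \eqref{gradient_flow}, essentially on Lemma \ref{lem_comparing_orthogonalities}. A secondary but delicate point is forcing the weight-derivative term to land in $O(\vert\delta J\vert^{2})$ resp.\ $O(\vert\partial J\vert^{2})$ rather than a larger power; beyond these, the remaining work is routine expansion of the interaction integrals as in \cite{may-cv}.
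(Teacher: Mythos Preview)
Your approach is essentially the paper's own: test the flow against $\phi_{l,j}$ in the natural scalar product, identify the right-hand side as $\sigma_{l,j}$ up to lower order, invert the nearly-diagonal coefficient matrix $\Xi$, plug in Proposition \ref{prop_simplifying_ski}, and absorb $\Vert v\Vert$ via Proposition \ref{prop_a-priori_estimate_on_v}.

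One clarification is warranted. Under \eqref{gradient_flow} with the $L_{g_{0}}$-scalar product there is \emph{no} weight-derivative term at all, since $\langle\cdot,\cdot\rangle_{L_{g_{0}}}$ is time-independent; so your sentence ``under \eqref{gradient_flow} the corresponding object is built from $\nabla J(u)$'' is off. The $O(\vert\partial J(u)\vert^{2})$ instead enters through the projection correction $c\,\langle u,\phi_{l,j}\rangle_{L_{g_{0}}}$ in $I_{3}$, where $c=k^{-1}\int Ku^{\frac{n+2}{n-2}}\nabla J(u)=O(\vert\partial J(u)\vert^{2})$ follows uniformly from $\partial J(u)u=0$ combined with Proposition \ref{prop_derivatives_of_J}(i) applied to $w=\nabla J(u)$---no need to split into $k=1$ versus $k=2,3$. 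This is exactly the paper's point: the $\vert\delta J\vert$ versus $\vert\partial J\vert$ dichotomy arises because \eqref{Yamabe_flow} forces one to estimate the weight-derivative contribution $\int\vert R-r\bar K\vert u^{\frac{4}{n-2}}\vert v\vert\varphi_{r}\lesssim\vert\delta J\vert^{2}+\Vert v\Vert^{2}$, while \eqref{gradient_flow} avoids that term entirely and only picks up $\vert\partial J\vert^{2}$ from the normalising projection.
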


The statements concerning the Yamabe type flow  \eqref{Yamabe_flow} are exactly those of Corollaries \ref{I-cor_simplifying_the_shadow_flow},\ref{I-cor_simplifying_the_shadow_flow_w} in \cite{may-cv}
and they are proven by testing the flow via
$\langle \partial_{t}u,\phi_{l,j}\rangle$. In case of \eqref{Yamabe_flow} the natural scalar product is
\begin{equation*}
\langle a,b\rangle_{Ku^{\frac{4}{n-2}}}=\int Ku^{\frac{4}{n-2}}ab.
\end{equation*}
Hence letting   $\dot \xi_{k,i}=(\frac{\dot \alpha_{i}}{\alpha_{i}},-\frac{\dot\lambda_{i}}{\lambda_{i}},\lambda_{i}\dot a_{i})$ we have to evaluate on $V(p,\varepsilon)$ under \eqref{Yamabe_flow} for instance
\begin{equation*}
\begin{split}
I_{1}+I_{2}
= &
\alpha_{i}\langle \phi_{k,i},\phi_{l,j}\rangle_{Ku^{\frac{4}{n-2}}}\dot \xi^{k,i} 
+
\langle \partial_{t}v,\phi_{l,j}\rangle_{Ku^{\frac{4}{n-2}}}
=
\langle \partial_{t}u,\phi_{l,j}\rangle_{Ku^{\frac{4}{n-2}}} \\
= &
-\langle (\frac{R}{K}-\frac{r}{k})u,\phi_{l,j}\rangle_{Ku^{\frac{4}{n-2}}}
=
I_{3},
\end{split}
\end{equation*}
where 
\begin{enumerate}[label=(\roman*)]
 \item \quad 
 $
\int Ku^{\frac{4}{n-2}}\phi_{k,i}\phi_{l,j}
=
c_{k}\alpha_{i}^{\frac{4}{n-2}}K_{i}\delta_{k,l}\delta_{i,j} 
 $
 up to some 
 \begin{equation*}
 O(
\frac{\vert \nabla K_{i}\vert}{\lambda_{i}}
+
\frac{1}{\lambda_{i}^{2}}
+
\frac{1}{\lambda_{i}^{n-2}} 
)\delta_{i,j}
+
O
(
\sum_{i\neq m=1}^{p}\varepsilon_{i,m} +\Vert v\Vert),
 \end{equation*}
\quad cf. the proof of Lemma \ref{I-lem_the_shadow_flow}   in \cite{may-cv}.
\item \quad    
$I_{2}
=
\int Ku^{\frac{4}{n-2}}\partial_{t} v\phi_{l,j}
=
-\int K\partial_{t}u^{\frac{4}{n-2}}v\phi_{l.j} 
+
O(\Vert v \Vert)_{k,l}\delta_{i,j}\dot \xi^{k,i} 
$ 
and 
\begin{equation*}
\int K\partial_{t}u^{\frac{4}{n-2}} v\phi_{l,j}
=
\frac{4}{n-2}\int (R-r\bar K)u^{\frac{4}{n-2}}v\phi_{l,j}
\end{equation*}
\item \quad 
$I_{3}=-\int (R-r\K)u^{\frac{n+2}{n-2}}\phi_{l,j} = -\frac{1}{2}\partial J(u)\phi_{l,j}$
, cf. Proposition \ref{prop_derivatives_of_J} and recalling $k=1$
\end{enumerate}
In contrast under \eqref{gradient_flow} the natural scalar product is 
\begin{equation*}
\langle a,b \rangle_{L_{g_{0}}}=\int L_{g_{0}}ab
\end{equation*}
and we have to evaluate 
\begin{equation*}
\begin{split}
I_{1}+I_{2}
= &
\alpha_{i}\langle \phi_{k,i},\phi_{l,j}\rangle_{L_{g_{0}}}\dot \xi^{k,i} 
+
\langle \partial_{t}v,\phi_{l,j}\rangle_{L_{g_{0}}} 
=
\langle \partial_{t}u,\phi_{l,j} \rangle_{L_{g_{0}}}
\\
= &
-\frac{r}{2k}\langle
\nabla J(u)
-
\frac{\int Ku^{\frac{n+2}{n-2}}\nabla J(u)}{k}u
,
\phi_{l,j}
\rangle_{L_{g_{0}} }
=
I_{3}, 
\end{split}
\end{equation*}
where 
\begin{enumerate}[label=(\roman*)]
 \item \quad  
 $
\int L_{g_{0}}\phi_{k,i}\phi_{l,j}
=
4n(n-1)c_{k}\delta_{k,l}\delta_{i,j} 
+
 O(
\frac{1}{\lambda_{i}^{2}}
+
\frac{1}{\lambda_{i}^{n-2}} 
)\delta_{i,j}
+
O
(
\varepsilon_{i,j})
 $
 \item \quad
 $I_{2}=\int L_{g_{0}}\partial_{t}v \phi_{l,j}=O(\Vert v \Vert)_{k,l}\delta_{i,j}\dot \xi^{k,i}$
 \item \quad
 $
I_{3}
=
-
\frac{r}{2k}\partial J(u)\phi_{l,j}
+
O
(
\int Ku^{\frac{n+2}{n-2}}\nabla J(u)
) 
 $
 and due to $\partial J(u)u=0$
 \begin{equation*}
\int Ku^{\frac{n+2}{n-2}}\nabla J(u)
=
O(\vert \partial J(u)\vert^{2}).
 \end{equation*}
\end{enumerate}
In order to compare (i)-(iii), note, that by virtue of Propositions \ref{prop_analysing_ski} we have 
\begin{equation}\label{small_difference}
K_{i}\alpha_{i}^{\frac{4}{n-2}}=4n(n-1)\frac{k}{r}
\end{equation}
up to some 
\begin{equation*}
 O
 ( 
 \sum_{r\neq s}
 \frac{\vert \nabla K_{r}\vert }{\lambda_{r}}+\frac{\vert \Delta K_{r}\vert}{\lambda_{r}^{2}}+\frac{1}{\lambda_{r}^{n-2}}+\varepsilon_{r,s}+\Vert v \Vert +\vert \partial J(u)\vert 
 ),
\end{equation*}
since 
$$
\sigma_{1,i}=O(\vert \partial J(u)\vert ), 
$$
cf. Proposition \ref{prop_analysing_ski}, also  (5.13) in \cite{may-cv} for the analogon in case $\omega \neq 0$. 
Consequently 
\begin{equation*}
 \Xi_{k,i,l,j}\dot \xi^{k,i}
=
\frac{r}{k}
\sigma_{l,j} 
+
\begin{cases}
O(\int (R-r\K)u^{\frac{4}{n-2}}v\phi_{l,j})\; \text{ under }\; \eqref{Yamabe_flow} \\
O(\vert \partial J(u)\vert^{2}) \; \text{ under }\; \eqref{gradient_flow}   
\end{cases}
\end{equation*}
with invertible 
\begin{equation}\label{Xi}
\begin{split}
\Xi_{k,i,l,j}
= &
4n(n-1)\alpha_{i}c_{k}\delta_{k,l}\delta_{i,j} +O(\frac{1}{\lambda_{i}^{2}})\delta_{i,j}\\
& +
 O
 ( 
 \sum_{r\neq s} 
 \frac{\vert \nabla K_{r}\vert }{\lambda_{r}}+\frac{\vert \Delta K_{r}\vert}{\lambda_{r}^{2}}+\frac{1}{\lambda_{r}^{n-2}}+\varepsilon_{r,s}+\Vert v \Vert +\vert \partial J(u)\vert 
 )
\end{split}
\end{equation}
and hence, since $\sigma_{l,j}=O(\vert \partial J(u)\vert)$
\begin{equation*}
\begin{split}
 \dot \xi_{k,i}
= &
\frac{\frac{r}{k}\sigma_{k,i} }{4n(n-1)\alpha_{i}c_{k}} (1+O(\frac{1}{\lambda_{i}^{2}})) \\
& +
\begin{cases}
O(\sum_{r}\int \vert R-r\K\vert u^{\frac{4}{n-2}}\vert v \vert \varphi_{r})\; \text{ under }\; \eqref{Yamabe_flow} \\
O(\vert \partial J(u)\vert^{2}) \; \text{ under }\; \eqref{gradient_flow}   
\end{cases} \\
& +
 O
 ( 
 \sum_{r\neq s}
 \frac{\vert \nabla K_{r}\vert ^{2}}{\lambda_{r}^{2}}+\frac{\vert \Delta K_{r}\vert^{2}}{\lambda_{r}^{4}}+\frac{1}{\lambda_{r}^{2(n-2)}}+\varepsilon_{r,s}^{2}+\Vert v \Vert^{2} +\vert \partial J(u)\vert^{2} 
 ).
\end{split}
\end{equation*}
Here enters the difference from \eqref{Yamabe_flow} to \eqref{gradient_flow}. In fact we have to estimate
\begin{equation*}
\begin{split}
\int \vert R  -r\K \vert u^{\frac{4}{n-2}}\vert v \vert \varphi_{r} 
\lesssim &
\int \vert R-r\K \vert u^{\frac{4}{n-2}}\vert v \vert (u+\vert v \vert) \\ 
\lesssim &
\Vert R-r\K \Vert_{L^{\frac{2n}{n+2}}_{g_{u}}}\Vert v \Vert
+
\Vert R-r\K \Vert_{L^{\frac{n}{2}}_{g_{u}}}\Vert v \Vert^{2}  \\
\leq &
\vert \delta J(u)\vert^{2}+ (1+\Vert R-r\K \Vert^{2}_{L^{\frac{n}{2}}_{g_{u}}})\Vert v \Vert^{2},
\end{split}
\end{equation*}
i.e. there appears $\vert \delta J(u)\vert$ instead of $\vert \partial J(u)\vert$. Also note, that we have 
\begin{equation*}
\Vert R-r\K \Vert_{L^{\frac{n}{2}}_{g_{u}}}\longrightarrow 0 \; \text{ as }\; t\longrightarrow \infty
\end{equation*}
along each flow line by virtue of Proposition \ref{I-prop_strong_convergence_of_the_first_variation} from \cite{may-cv}. We thus obtain 

\begin{equation*}
\begin{split}
 \dot \xi_{k,i}
= 
\frac{r}{k}\frac{\sigma_{k,i} }{4n(n-1)\alpha_{i}c_{k}} 
(1+o_{\frac{1}{\lambda_{i}}}(1))
 +
 O
 \biggr(&
 \sum_{r\neq s} 
 \frac{\vert \nabla K_{r}\vert ^{2}}{\lambda_{r}^{2}} 
+
 \frac{1}{\lambda_{r}^{4}}+\frac{1}{\lambda_{r}^{2(n-2)}}+\varepsilon_{r,s}^{2}\\
 & +
 \Vert v \Vert^{2} 
 +
\begin{cases}
\vert \delta J(u)\vert^{2}\; \text{ under }\; \eqref{Yamabe_flow} \\
\vert \partial J(u)\vert^{2} \; \text{ under }\; \eqref{gradient_flow}   
\end{cases}
\biggr).
\end{split}
\end{equation*}
Hence Proposition \ref{prop_the_shadow_flow} for $u\in V(p,\varepsilon)$ follows from Proposition \ref{prop_simplifying_ski}  and \eqref{small_difference}  absorbing $\Vert v \Vert$ via Proposition \ref{prop_a-priori_estimate_on_v}. The case $u\in V(\omega,p,\varepsilon)$ is analogous.

\subsection{Principal behaviour}\label{sec_principal_behaviour} 
Let us recall  some generic notions and results in the statements below. 

\begin{definition}\label{def_principally_lower_bounded_of_the_first_variation}
We call $\partial J$ principally lower bounded, 
if for every $p\geq 1$ there exist $c, \varepsilon>0$ such, that
\begin{enumerate}[label=(\roman*)]
 \item \quad
 $\vert \partial J(u)\vert 
\geq
c
(
\sum_{r}
\frac{\vert \nabla K_{r}\vert}{K_{r}\lambda_{r}}
+
\frac{\vert \lap K_{r}\vert }{K_{r}\lambda_{r}^{2}}
+
\lambda_{r}^{2-n}
+
\sum_{r\neq s}\varepsilon_{r,s}
)
$ for all $ u\in V(p, \varepsilon)$
 \item \quad
 $\vert \partial  J(u)\vert \geq
c
(
\sum_{r}
\frac{\vert \nabla K_{r}\vert}{K_{r}\lambda_{r}}
+
\lambda_{r}^{\frac{2-n}{2}}
+
\sum_{r\neq s}\varepsilon_{r,s}
)
$ for all $u\in V(\omega, p, \varepsilon).$
 \end{enumerate}
\end{definition}

Under this mild assumption  we have uniformity in $V(\omega, p, \varepsilon)$ as follows.

 \begin{proposition}
Assume $\partial J$ to be principally lower bounded. For 
$$u=u_{\alpha, \beta}+\alpha^{i}\var_{i}+v\in V(\omega, p, \varepsilon)$$ 
with $k_{u}\equiv 1$ we then have
\begin{equation*}\begin{split}
\lambda_{i}^{-1}, \varepsilon_{i,j}, \vert 1-\frac{r_{\infty}\alpha_{i}^{\frac{4}{n-2}}K_{i}}{4n(n-1)}\vert
,
\vert(\frac{r}{k})_{u_{1, \beta}}-r_{\infty}\alpha^{\frac{4}{n-2}}\vert, \vert \partial J(u_{1, \beta})\vert
,  
\Vert v \Vert
\longrightarrow 
0
\end{split}\end{equation*} 
uniformly as $\vert \partial J(u)\vert \longrightarrow 0$ and $J(u)=r\longrightarrow J_{\infty}=r_{\infty}$.
\end{proposition}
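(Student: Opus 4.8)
The plan is to argue by contradiction and reduce to a sequence. If the claimed uniform convergence fails, there are $\eta_{0}>0$ and functions $u_{n}=u_{\alpha,\beta}+\alpha^{i}\var_{i}+v\in V(\omega,p,\varepsilon)$ with $k_{u_{n}}\equiv 1$, $\vert\partial J(u_{n})\vert\to 0$ and $r_{n}=J(u_{n})\to r_{\infty}$ along which at least one of the listed quantities stays $\geq\eta_{0}$; I write $\lambda_{i}^{(n)},\alpha_{i}^{(n)},a_{i}^{(n)},\alpha^{(n)},\beta^{(n)},v_{n}$ for the optimal parameters and error of $u_{n}$ from Proposition \ref{prop_optimal_choice}. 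The quantities $\lambda_{i}^{-1}$ and $\varepsilon_{i,j}$ are disposed of at once: since $\partial J$ is principally lower bounded (Definition \ref{def_principally_lower_bounded_of_the_first_variation}), $\vert\partial J(u_{n})\vert\to 0$ forces $(\lambda_{i}^{(n)})^{-1}\to 0$, $\varepsilon_{i,j}^{(n)}\to 0$ and $\vert\nabla K_{i}^{(n)}\vert/\lambda_{i}^{(n)}\to 0$ — and, when $\omega=0$, in addition $\vert\lap K_{i}^{(n)}\vert/(\lambda_{i}^{(n)})^{2}\to 0$, $(\lambda_{i}^{(n)})^{2-n}\to 0$ — simply by reading the inequality backwards. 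Feeding this, together with $\vert\partial J(u_{n})\vert\to 0$, into Proposition \ref{prop_a-priori_estimate_on_v}, whose right hand side is a sum of precisely these quantities and $\vert\partial J(u_{n})\vert$, then gives $\Vert v_{n}\Vert\to 0$.

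Next I would pin down the $\alpha$-normalisations from the analysis of the $\sigma_{1,i}$, i.e.\ from Proposition \ref{prop_analysing_ski} (together with its $\omega>0$ form, (5.13) in \cite{may-cv}), which yields balancing identities $K_{i}^{(n)}(\alpha_{i}^{(n)})^{\frac{4}{n-2}}=4n(n-1)\,k_{n}/r_{n}+F_{n}$ and $r_{n}(\alpha^{(n)})^{\frac{4}{n-2}}/k_{n}=1+G_{n}$, whose errors $F_{n},G_{n}$ are dominated by the quantities already controlled together with $\vert\partial J(u_{n})\vert$, hence $\to 0$; since $k_{n}\equiv 1$ and $r_{n}\to r_{\infty}$ this produces $\bigl\vert 1-r_{\infty}(\alpha_{i}^{(n)})^{4/(n-2)}K_{i}^{(n)}/(4n(n-1))\bigr\vert\to 0$ and $r_{\infty}(\alpha^{(n)})^{4/(n-2)}\to 1$. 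When $\omega>0$ the two remaining $u_{1,\beta}$-quantities are handled as follows: using $u_{1,\beta}=\omega+\beta^{i}\mathrm{e}_{i}+h(0,\beta)$, $\Vert h(0,\beta)\Vert=O(\vert\beta\vert^{2})$, the orthonormality $\langle\omega,\mathrm{e}_{i}\rangle_{L_{g_{0}}}=\langle\omega,h(0,\beta)\rangle_{L_{g_{0}}}=0$ and $L_{g_{0}}\omega=K\omega^{(n+2)/(n-2)}$, a direct expansion gives $(r/k)_{u_{1,\beta}}=1+O(\vert\beta\vert^{2})$, and since $\partial J(\omega)=0$ and $\mathrm{e}_{i}\in\ker\partial^{2}J(\omega)$ a first order Taylor expansion of $\partial J$ at $\omega$ gives $\vert\partial J(u_{1,\beta})\vert=O(\vert\beta\vert^{2})$. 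Thus both are $O(\vert\beta^{(n)}\vert^{2})+o(1)$, and the whole proposition reduces to establishing $\beta^{(n)}\to 0$.

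For the latter I would pass to a weak limit. After a further subsequence $(\alpha^{(n)},\beta^{(n)})\to(\alpha_{\infty},\beta_{\infty})$; since $v_{n}\to 0$ strongly and the bubbles $\var_{a_{i}^{(n)},\lambda_{i}^{(n)}}$ concentrate and hence tend weakly to $0$, we get $u_{n}\rightharpoonup u_{\alpha_{\infty},\beta_{\infty}}$ in $W^{1,2}(M)$. Testing $\partial J(u_{n})$ against the fixed functions $u_{\alpha_{\infty},\beta_{\infty}}$ and $\partial_{\beta_{j}}u_{\alpha_{\infty},\beta_{\infty}}$, all interaction terms with the concentrating bubbles and with $v_{n}$ vanish in the limit, so from $\vert\partial J(u_{n})\vert\to 0$, $k_{n}\equiv 1$ and $r_{n}\to r_{\infty}$ one reads off $\int(L_{g_{0}}u_{\alpha_{\infty},\beta_{\infty}}-r_{\infty}Ku_{\alpha_{\infty},\beta_{\infty}}^{(n+2)/(n-2)})\psi=0$ for every $\psi$ in the span of $u_{\alpha_{\infty},\beta_{\infty}}$ and the $\partial_{\beta_{j}}u_{\alpha_{\infty},\beta_{\infty}}$. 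Choosing $\psi=u_{\alpha_{\infty},\beta_{\infty}}$ identifies $r_{\infty}=r_{u_{\alpha_{\infty},\beta_{\infty}}}/k_{u_{\alpha_{\infty},\beta_{\infty}}}$, so that functional is a positive multiple of $\partial J(u_{\alpha_{\infty},\beta_{\infty}})$; it vanishes on the span above, which for $\varepsilon$ small is complementary to $H_{0}(\omega)^{\perp_{L_{g_{0}}}}$, while $u_{\alpha_{\infty},\beta_{\infty}}$ being a pseudo critical point makes $\partial J(u_{\alpha_{\infty},\beta_{\infty}})$ vanish on $H_{0}(\omega)^{\perp_{L_{g_{0}}}}$ itself; hence $\partial J(u_{\alpha_{\infty},\beta_{\infty}})=0$, i.e.\ a suitable rescaling $\widehat{\omega}$ of $u_{\alpha_{\infty},\beta_{\infty}}$ solves $L_{g_{0}}\widehat{\omega}=K\widehat{\omega}^{(n+2)/(n-2)}$ and is $O(\varepsilon)$-close to $\omega$. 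By local uniqueness of $\omega$ up to scaling, $\widehat{\omega}$ is a scaling of $\omega$, and since the only scalings of $\omega$ on the pseudo critical manifold $\{u_{\alpha,\beta}\}$ are the $u_{\alpha,0}$, this forces $\beta_{\infty}=0$, contradicting the choice of the $u_{n}$.

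I expect the main obstacle to be this last step. The weak-limit passage must be carried out uniformly: paired with a fixed test function, every cross term between $u_{\alpha_{\infty},\beta_{\infty}}$, the concentrating bubbles and $v_{n}$ has to be shown to vanish. More essentially, the conclusion hinges on the local uniqueness of the solution $\omega$ up to scaling (equivalently, $\omega$ being isolated modulo the scaling action); in the absence of such an isolatedness one would have to replace this by a Lojasiewicz-type gradient inequality near $\omega$ to prevent a drift of the kernel parameters $\beta$. Steps 1--3 are, by contrast, essentially bookkeeping: their content is only that the principal lower bound upgrades the qualitative concentration-compactness of Proposition \ref{prop_concentration_compactness} to uniform, quantitative decay.
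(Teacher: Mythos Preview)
The paper does not give its own proof here; it merely refers to the corresponding Proposition in \cite{may-cv}. So a line-by-line comparison is not possible, and I evaluate your argument on its own terms.

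Your treatment of $\lambda_{i}^{-1}$, $\varepsilon_{i,j}$, $\Vert v\Vert$ and of $\bigl\vert 1-\tfrac{r_{\infty}\alpha_{i}^{4/(n-2)}K_{i}}{4n(n-1)}\bigr\vert$ via the principal lower bound, Proposition~\ref{prop_a-priori_estimate_on_v} and the $\sigma_{1,i}$ balancing is correct and is exactly the intended mechanism. The weak-limit step is also carried out correctly: you obtain $\int(L_{g_{0}}u_{\alpha_{\infty},\beta_{\infty}}-r_{\infty}Ku_{\alpha_{\infty},\beta_{\infty}}^{(n+2)/(n-2)})\psi=0$ for $\psi$ in the span of $u_{\alpha_{\infty},\beta_{\infty}}$ and the $\partial_{\beta_{j}}u_{\alpha_{\infty},\beta_{\infty}}$, identify $r_{\infty}=(r/k)_{u_{\alpha_{\infty},\beta_{\infty}}}$, and then combine this with the pseudo-critical property to conclude $\partial J(u_{\alpha_{\infty},\beta_{\infty}})=0$.

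Where you take an unnecessary detour is in the last line. Having established $\partial J(u_{\alpha_{\infty},\beta_{\infty}})=0$ and $r_{\infty}=(r/k)_{u_{\alpha_{\infty},\beta_{\infty}}}$, you are already done: by continuity of $\beta\mapsto\partial J(u_{1,\beta})$ and of $\beta\mapsto(r/k)_{u_{1,\beta}}$ together with $(\alpha^{(n)},\beta^{(n)})\to(\alpha_{\infty},\beta_{\infty})$, one gets
\[
\vert\partial J(u_{1,\beta^{(n)}})\vert\longrightarrow\vert\partial J(u_{1,\beta_{\infty}})\vert=0
\quad\text{and}\quad
(r/k)_{u_{1,\beta^{(n)}}}-r_{\infty}(\alpha^{(n)})^{\frac{4}{n-2}}\longrightarrow 0,
\]
the latter because $(r/k)_{u_{1,\beta_{\infty}}}=\alpha_{\infty}^{4/(n-2)}(r/k)_{u_{\alpha_{\infty},\beta_{\infty}}}=r_{\infty}\alpha_{\infty}^{4/(n-2)}$. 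This yields the contradiction without ever deciding whether $\beta_{\infty}=0$. In particular the isolatedness of $\omega$ up to scaling is \emph{not} needed, and the ``main obstacle'' you flag is a red herring: the proposition does not assert $\beta\to 0$, only that $\vert\partial J(u_{1,\beta})\vert$ and $\vert(r/k)_{u_{1,\beta}}-r_{\infty}\alpha^{4/(n-2)}\vert$ tend to zero, and these follow from the weak limit being \emph{some} critical point, not necessarily $\omega$ itself. Your Taylor bounds $\vert\partial J(u_{1,\beta})\vert=O(\vert\beta\vert^{2})$ and $(r/k)_{u_{1,\beta}}=1+O(\vert\beta\vert^{2})$ are correct but are only upper bounds; they make $\beta\to 0$ sufficient, not necessary, and you should not route the argument through them.
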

\begin{proof}
 Cf. Proposition \ref{I-prop_uniformity_in_V(omega,p,e)} in \cite{may-cv}.
\end{proof}

As a consequence we obtain limiting uniqueness of non compact flow lines in analogy to the unique limit of  compact flow lines. 
\begin{proposition}\label{prop_unicity_of_a_limiting_critical_point_at_infinity}

Assume $\partial J$ to be principally lower bounded.\\ 
If a sequence $u(t_{k})$ along \eqref{Yamabe_flow} or \eqref{gradient_flow} diverges 
in the sense, that
$$
\e p>1, \varepsilon_{k}\searrow 0\;:\; u(t_{k})\in V(\omega, p, \varepsilon_{k}),
$$
then $u$ diverges as well in the sense, that
$$
\e  p>1 \fa \varepsilon>0 \e T>0 \fa t>T\;:\; u(t)\in V(\omega, p, \varepsilon).
$$
\end{proposition}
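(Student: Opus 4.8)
The plan is to upgrade the convergence of a \emph{subsequence} $u(t_k)\in V(\omega,p,\varepsilon_k)$ with $\varepsilon_k\searrow 0$ to convergence along the \emph{whole} flow line. The standard obstruction to such a statement is that a priori the flow could leave a small neighbourhood $V(\omega,p,\varepsilon)$ between two consecutive $t_k$'s and re-enter it later; to rule this out one needs a \L ojasiewicz--Simon type argument adapted to the critical-point-at-infinity setting. First I would record the consequences of Proposition \ref{prop_the_shadow_flow} together with the principal lower bound (Definition \ref{def_principally_lower_bounded_of_the_first_variation}) and Proposition \ref{prop_a-priori_estimate_on_v}: along a flow line one has $\int_0^\infty |\partial J(u)|^2\,dt<\infty$ (respectively $\int_0^\infty|\delta J(u)|^2\,dt<\infty$ under \eqref{Yamabe_flow}), hence there is a time sequence along which $|\partial J(u)|\to 0$, and by \eqref{flow_palais_smale_yamabe_flow}, \eqref{flow_palais_smale_gradient} in fact $|\partial J(u(t))|\to 0$ along the \emph{entire} flow. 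Combined with the lower bound this forces $\lambda_i^{-1},\varepsilon_{i,j},|\nabla K_i|/\lambda_i,\|v\|\to 0$ along the whole flow, so for every $\varepsilon>0$ there is $T_0$ with $u(t)\in V(\omega,p,\varepsilon)\cup(\text{other strata})$ for $t\ge T_0$; the content of the proposition is that the stratum cannot change and, in the case $\omega=0$, that $p$ and the blow-up points stabilise.

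The key steps, in order, are: (1) Fix the stratum. Using the concentration-compactness Proposition \ref{prop_concentration_compactness} and the uniformity Proposition just quoted (Proposition \ref{I-prop_uniformity_in_V(omega,p,e)} in \cite{may-cv}), show that once $|\partial J(u)|$ and $r-J_\infty$ are small enough, the pair $(\omega,p)$ realising $u\in V(\omega,p,\varepsilon)$ is uniquely determined by the energy level $J_\infty$ and cannot jump, because a jump would require the flow to pass through a region where $|\partial J|$ is bounded below by a fixed constant, contradicting $|\partial J(u(t))|\to 0$. (2) Control the drift of the concentration parameters. Integrating the shadow flow equations of Proposition \ref{prop_the_shadow_flow} and using $\int_0^\infty |\partial J|^2<\infty$ (and the extra $\int_0^\infty|\delta J|^2<\infty$ under \eqref{Yamabe_flow}), show that $\dot\lambda_i/\lambda_i$, $\lambda_i\dot a_i$ are integrable in time up to terms that are themselves $L^1$, so the $a_i$ converge and the $\lambda_i$ are monotone-up-to-error; this prevents the blow-up points from wandering out of a fixed coordinate chart. (3) Conclude: for any $\varepsilon>0$, choosing $T$ so large that all the quantities in $A_{u}(\omega,p,\varepsilon)$ are $<\varepsilon$ for $t\ge T$ (possible by steps (1)--(2)), we get $u(t)\in V(\omega,p,\varepsilon)$ for all $t\ge T$ with the \emph{same} $\omega,p$, which is exactly the claim.

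The main obstacle I expect is step (2): proving that the concentration point $a_i$ actually converges rather than merely staying in a compact set. The shadow-flow equation gives $\lambda_i\dot a_i = \frac{r}{k}[\frac{d_3}{c_3}\frac{\nabla K_i}{K_i\lambda_i}+\dots]+(\text{error})$, and the leading term $\nabla K_i/\lambda_i$ need not be time-integrable just from $\lambda_i^{-1}\to 0$; one must exploit that near a non-degenerate critical point $x_0$ of $K$ the gradient $\nabla K$ is comparable to the distance, turning the $a_i$-equation into an approximate gradient flow for $K$ whose trajectories converge, \emph{or} invoke that $|\partial J(u)|\gtrsim |\nabla K_i|/(K_i\lambda_i)+\lambda_i^{2-n}$ (the principal lower bound) so that $\int_0^\infty(|\nabla K_i|/\lambda_i)^2\,dt\le\int_0^\infty|\partial J|^2\,dt<\infty$, and then a further Cauchy-sequence argument on $a_i(t)$ — which is precisely what Proposition \ref{I-prop_uniformity_in_V(omega,p,e)} in \cite{may-cv} is designed to feed into. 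The honest statement is that this is the analogue for critical points at infinity of the classical \L ojasiewicz argument used for compact flow lines, and the cited uniformity proposition is the substitute for the \L ojasiewicz inequality; the proof is then a matter of threading these estimates together rather than a new idea.

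\begin{proof}
Along a flow line for \eqref{Yamabe_flow} or \eqref{gradient_flow} we have, as recalled in the introduction, $\int_0^\infty|\partial J(u)|^2\,dt<\infty$ together with $|\partial J(u(t))|\to 0$ as $t\to\infty$ by \eqref{flow_palais_smale_yamabe_flow} and \eqref{flow_palais_smale_gradient}; moreover $J(u(t))=r_{u(t)}$ is non-increasing and bounded below, hence $r_{u(t)}\to r_\infty$ for some $r_\infty>0$. Since $u(t_k)\in V(\omega,p,\varepsilon_k)$ with $\varepsilon_k\searrow 0$, Proposition \ref{prop_concentration_compactness} identifies $r_\infty=J_\infty$ with the energy of the corresponding critical point at infinity and pins down the pair $(\omega,p)$. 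By the principal lower boundedness of $\partial J$ and Proposition \ref{prop_a-priori_estimate_on_v}, the fact that $|\partial J(u(t))|\to 0$ forces, along the whole flow,
\begin{equation*}
\lambda_i(t)^{-1},\ \varepsilon_{i,j}(t),\ \frac{|\nabla K_i(t)|}{\lambda_i(t)},\ \|v(t)\|\ \longrightarrow\ 0,
\end{equation*}
whenever $u(t)$ lies in some $V(\omega',p',\varepsilon)$; combined with the uniformity statement (Proposition \ref{I-prop_uniformity_in_V(omega,p,e)} of \cite{may-cv}) this shows that for $t$ large the stratum $(\omega',p')$ is forced to equal $(\omega,p)$, since any other stratum would meet the flow only where $|\partial J|$ is bounded below by a fixed positive constant, which is impossible. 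Finally, integrating the shadow-flow equations of Proposition \ref{prop_the_shadow_flow} and using $\int_0^\infty|\partial J(u)|^2\,dt<\infty$ (resp. the additional $\int_0^\infty|\delta J(u)|^2\,dt<\infty$ under \eqref{Yamabe_flow}) together with the principal lower bound $|\partial J(u)|\gtrsim\sum_r|\nabla K_r|/(K_r\lambda_r)+\sum_{r\ne s}\varepsilon_{r,s}+\lambda_r^{2-n}$ (resp. $\lambda_r^{(2-n)/2}$), we obtain that $\dot a_i$ and $\dot\lambda_i/\lambda_i$ are, up to $L^1$ error terms, controlled by $|\partial J(u)|$, so $a_i(t)$ is Cauchy and converges, while $\lambda_i(t)\to\infty$ in a controlled fashion. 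Hence for every $\varepsilon>0$ there is $T=T(\varepsilon)$ so that all the defining quantities of $A_{u}(\omega,p,\varepsilon)$ are $<\varepsilon$ for $t\ge T$, i.e. $u(t)\in V(\omega,p,\varepsilon)$ for all $t\ge T$ with the same $\omega$ and $p$. This is the assertion.
\end{proof}
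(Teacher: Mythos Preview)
The paper does not give its own proof here; it simply cites the corresponding proposition in \cite{may-cv}. So there is no detailed argument to compare against, only the architecture implied by the surrounding statements. Your outline contains the right core ingredient---the uniformity proposition (Proposition \ref{I-prop_uniformity_in_V(omega,p,e)} of \cite{may-cv}) combined with $|\partial J(u(t))|\to 0$ and $J(u(t))\to r_\infty$---but the write-up over-reaches in one direction and is vague at the crucial step.

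\textbf{Over-reach.} Your step (2), the convergence of the $a_i(t)$, is not part of this proposition and should not be here. Look at Definition \ref{def_V(omega,p,e)}: membership in $V(\omega,p,\varepsilon)$ puts no constraint on $a_i$ beyond the existence of \emph{some} admissible parameter choice. The convergence $|\nabla K(a_i)|\to 0$ (hence $a_i\to x_{j_i}$) is exactly the content of the \emph{next} statement, Lemma \ref{lem_critical_points_of_K_as_attractors}. By folding it in you are proving something stronger than asked, and your justification for it (``$\dot a_i$ controlled by $|\partial J(u)|$, hence $a_i$ Cauchy'') is not correct as stated: from $\lambda_i\dot a_i=O(|\partial J(u)|)$ you get $|\dot a_i|=O(|\partial J(u)|/\lambda_i)$, and $\int_0^\infty |\partial J(u)|/\lambda_i\,dt$ is not obviously finite from $|\partial J(u)|\in L^2$ alone.

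\textbf{The actual trapping argument.} The step you label (1) is where the proposition is really proved, and your version of it (``any other stratum would meet the flow only where $|\partial J|$ is bounded below'') is too impressionistic. The clean mechanism is a barrier argument using uniformity directly. Fix $\varepsilon_0>0$ small enough for the uniformity proposition to apply on $V(\omega,p,\varepsilon_0)$. That proposition says: there is $\eta>0$ such that every $u\in V(\omega,p,\varepsilon_0)$ with $|\partial J(u)|<\eta$ and $|J(u)-r_\infty|<\eta$ actually lies in $V(\omega,p,\varepsilon/2)$. Choose $T_0$ so large that both smallness conditions hold for all $t\ge T_0$, then pick $t_k>T_0$ with $\varepsilon_k<\varepsilon/2$. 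If the flow ever exited $V(\omega,p,\varepsilon/2)$ after $t_k$, at the first exit time $t^*$ one has $u(t^*)\in\overline{V(\omega,p,\varepsilon/2)}\subset V(\omega,p,\varepsilon_0)$ by continuity, whence the uniformity proposition forces $u(t^*)\in V(\omega,p,\varepsilon/2)$ again---a contradiction. This is the whole proof; no shadow-flow integration, no $a_i$ convergence, and no discussion of ``other strata'' is needed. Your write-up gestures at this but never quite states the exit-time contradiction, which is the point.
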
 
\begin{proof}
Cf. Proposition \ref{I-prop_unicity_of_a_limiting_critical_point_at_infinity} from \cite{may-cv}
\end{proof}
\begin{remark}
In the statement of Proposition \ref{prop_unicity_of_a_limiting_critical_point_at_infinity} and 
in contrast to its corresponding counterpart Proposition 
\ref{I-prop_unicity_of_a_limiting_critical_point_at_infinity} in \cite{may-cv} we have replaced 
\begin{center}
"\ldots converging to a critical point at infinity in the sense, that \ldots " 
\end{center}
by 
\begin{center}
"\ldots diverges in the sense, that\ldots ".  
\end{center}
In fact, as we have exposed in \cite{MM3} and will see in the present paper, not every non compact or diverging flow line leads to a critical point at infinity.  
\end{remark}
Note, that Proposition \ref{prop_unicity_of_a_limiting_critical_point_at_infinity} in combination with Proposition  \ref{prop_concentration_compactness} tells us, that every  non compact, i.e. diverging  flow line has to remain in some $V(\omega,p,\varepsilon)$ eventually for every $\varepsilon>0$. 
\begin{lemma}\label{lem_critical_points_of_K_as_attractors}
If $\partial J$ is principally lower bounded, then under \eqref{Yamabe_flow} or \eqref{gradient_flow} 
$$ 
K(a_{i})\longrightarrow K_{i_{\infty}} \; \text{ and } \; \vert \nabla K(a_{i})\vert\longrightarrow 0 \; \text{ for all }\;i=1, \ldots,p
$$
and every diverging flow line converges to a critical point at infinity.
\end{lemma}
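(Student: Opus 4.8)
The plan is to combine the shadow flow description (Proposition \ref{prop_the_shadow_flow}) with the limiting-uniqueness statement (Proposition \ref{prop_unicity_of_a_limiting_critical_point_at_infinity}) and the fact, valid along any flow line of \eqref{Yamabe_flow} or \eqref{gradient_flow_modified}, that $|\partial J(u)| \to 0$ as $t\to\infty$, cf. \eqref{flow_palais_smale_yamabe_flow} and \eqref{flow_palais_smale_gradient}. First I would observe that, since $\partial J$ is assumed principally lower bounded, a diverging flow line must—by the remark following Proposition \ref{prop_unicity_of_a_limiting_critical_point_at_infinity} and by Proposition \ref{prop_concentration_compactness}—eventually enter and remain in some fixed $V(\omega,p,\varepsilon)$ for every $\varepsilon>0$, with $p\geq 1$. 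Thus along the flow line the optimally chosen parameters $\alpha_i,\lambda_i,a_i$ (Proposition \ref{prop_optimal_choice}) are well defined for large $t$ and satisfy $\lambda_i^{-1}\to 0$, $\varepsilon_{i,j}\to 0$, $\|v\|\to 0$, $|\partial J(u)|\to 0$. Moreover the principal lower bound (Definition \ref{def_principally_lower_bounded_of_the_first_variation}) forces each individual term $\tfrac{|\nabla K_i|}{K_i\lambda_i}$, $\tfrac{|\Delta K_i|}{K_i\lambda_i^2}$ (resp. $\lambda_i^{\frac{2-n}{2}}$), $\varepsilon_{i,j}$ in the sum to tend to $0$ as well, since they are all nonnegative and their sum is controlled by $|\partial J(u)|\to 0$.

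Next I would extract the convergence $|\nabla K(a_i)|\to 0$ directly: from $\tfrac{|\nabla K(a_i)|}{K(a_i)\lambda_i}\to 0$ together with the a priori bound $\lambda_i^{-1}\to 0$ and $K(a_i)$ bounded above and below (the $a_i$ range over the compact $M$ and $K$ is smooth and positive), we do not yet get $|\nabla K(a_i)|\to 0$ for free, since $\tfrac1{\lambda_i}\to 0$ weakens the statement rather than strengthening it. So the correct route is to use the \emph{shadow flow} of Proposition \ref{prop_the_shadow_flow}: the equation $\lambda_i\dot a_i = \tfrac{r}{k}\big[\tfrac{e_3}{c_3}\tfrac{\nabla K_i}{K_i\lambda_i} + \dots\big](1+o(1))$ plus error terms that are quadratically integrable in time (being $O(|\partial J|^2)$, or $O(|\delta J|^2)$ under \eqref{Yamabe_flow}, plus $O$ of the squared principal terms). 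Integrating $|\dot a_i|$ in time, the right-hand side is in $L^1$ on $[T,\infty)$ because each term $\tfrac{|\nabla K_i|}{\lambda_i^2}$, $\tfrac{|\Delta K_i|}{\lambda_i^3}$, $\varepsilon_{i,j}/\lambda_i$ etc. is dominated by (a constant times) $|\partial J(u)|$ or its square, which is integrable. Hence $a_i(t)$ is Cauchy and converges to some limit $a_{i_\infty}\in M$; passing to the limit in $\tfrac{|\nabla K(a_i)|}{K(a_i)\lambda_i}\to 0$ is still not quite enough, so instead I would argue: if $|\nabla K(a_{i_\infty})|>0$, then for large $t$ we would have $\lambda_i|\dot a_i|\gtrsim \tfrac1{\lambda_i}\gtrsim$ a term whose time integral against the flow... — here one needs the genuine lower bound. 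The clean way: since $a_i\to a_{i_\infty}$ and the flow line is diverging, the principal lower bound gives $|\partial J(u)|\gtrsim \tfrac{|\nabla K(a_i)|}{K(a_i)\lambda_i}$; but also, by the shadow equation for $-\dot\lambda_i/\lambda_i$ and standard ODE comparison, $\lambda_i$ cannot stay bounded while $|\nabla K_i|$ stays bounded away from $0$, forcing (after the blow-up point analysis of Section \ref{sec_compact_regions}) $a_i$ to sit at a critical point of $K$ in the limit, i.e. $\nabla K(a_{i_\infty})=0$ and hence $|\nabla K(a_i)|\to 0$. Continuity of $K$ then yields $K(a_i)\to K(a_{i_\infty})=:K_{i_\infty}$.

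For the final assertion—that every diverging flow line \emph{converges to a critical point at infinity}—I would invoke Proposition \ref{prop_unicity_of_a_limiting_critical_point_at_infinity} to pin down a single $V(\omega,p,\varepsilon)$ in which the line stays, then use the just-proved facts ($a_i\to a_{i_\infty}$ with $\nabla K(a_{i_\infty})=0$, $\lambda_i^{-1}\to 0$, $\varepsilon_{i,j}\to 0$, $\|v\|\to 0$, $\alpha_i^{4/(n-2)}K_i\to 4n(n-1)k/r_\infty$ by \eqref{small_difference} in the limit) to identify the asymptotic profile as a genuine critical point at infinity in the sense of \cite{MM3}: the concentration points are critical points of $K$, the masses $\alpha_i$ are determined, and the interaction parameters vanish. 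The convergence of the $a_i$ (Cauchy in time) is exactly what upgrades "the line returns to $V(\omega,p,\varepsilon)$ for all $\varepsilon$" to "the line converges to a fixed critical point at infinity."

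\medskip

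\textbf{Main obstacle.} The delicate point is establishing that $a_i(t)$ actually converges (is Cauchy in time) and that its limit is a critical point of $K$, rather than merely that $\tfrac{|\nabla K(a_i)|}{\lambda_i}\to 0$, which by itself is compatible with $a_i$ wandering or with $\nabla K(a_i)$ not tending to $0$ when $\lambda_i\to\infty$ fast. This requires carefully integrating the shadow-flow equation for $\lambda_i\dot a_i$ in time and showing the right-hand side lies in $L^1([T,\infty))$ — which hinges on the quadratic-in-time integrability of $|\partial J(u)|$ (resp. $|\delta J(u)|$), on the a priori estimate for $\|v\|$ in Proposition \ref{prop_a-priori_estimate_on_v}, and on absorbing the squared principal terms $\tfrac{|\nabla K_r|^2}{\lambda_r^2}$, $\varepsilon_{r,s}^2$ etc. against $|\partial J(u)|^2$ using the principal lower bound once more. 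The interplay between the weak ($|\delta J|$) and strong ($|\partial J|$) integrability, depending on whether one works under \eqref{Yamabe_flow} or \eqref{gradient_flow}, is where the estimate in the shadow-flow computation (the display bounding $\int |R-r\bar K| u^{4/(n-2)}|v|\varphi_r$) does the essential work, and must be tracked with care.
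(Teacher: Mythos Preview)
The paper's own ``proof'' simply defers to Proposition~\ref*{I-prop_unicity_of_a_limiting_critical_point_at_infinity} in \cite{may-cv}, so there is no detailed argument here to compare against; your proposal is effectively an attempt to reconstruct that external proof.  Your setup (using Propositions~\ref{prop_concentration_compactness} and~\ref{prop_unicity_of_a_limiting_critical_point_at_infinity} to pin the flow line inside $V(\omega,p,\varepsilon)$, then invoking the shadow flow) is correct and is exactly the right framework.  However, there is a genuine gap in the key step.

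Your claim that $\dot a_i\in L^1\big([T,\infty)\big)$ is not justified by the argument you give.  From Proposition~\ref{prop_the_shadow_flow} and the principal lower bound one obtains $|\lambda_i\dot a_i|=O(|\partial J(u)|)$, hence $|\dot a_i|=O\big(\tfrac{|\partial J(u)|}{\lambda_i}\big)$.  But only $\int|\partial J|^2<\infty$ is available, not $\int|\partial J|<\infty$; and the principal lower bound gives at best $\tfrac{1}{\lambda_i}\lesssim|\partial J|^{1/(n-2)}$ in the $\omega=0$ case, so $\tfrac{|\partial J|}{\lambda_i}\lesssim|\partial J|^{(n-1)/(n-2)}$, which for $n=4,5$ is strictly between $|\partial J|$ and $|\partial J|^2$ and need not be integrable.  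Your fallback (``after the blow-up point analysis of Section~\ref{sec_compact_regions}\ldots'') is circular, since that section applies the present lemma via Lemma~\ref{lem_critical_points_of_K_as_attractors}.

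The cleaner route, which you are close to, is not to bound $\dot a_i$ itself but to differentiate the \emph{scalar} quantities $K(a_i)$ and $|\nabla K(a_i)|^2$ along the flow.  For these an extra factor $\tfrac{|\nabla K_i|}{\lambda_i}=O(|\partial J|)$ appears and does the job: from
\[
\partial_t K(a_i)=\frac{\langle\nabla K_i,\lambda_i\dot a_i\rangle}{\lambda_i}
\quad\text{and}\quad
\partial_t|\nabla K_i|^2=\frac{2\,\nabla^2K_i(\nabla K_i,\lambda_i\dot a_i)}{\lambda_i}
\]
one obtains $|\partial_t K(a_i)|,\;|\partial_t|\nabla K_i|^2|\lesssim \tfrac{|\nabla K_i|}{\lambda_i}\cdot|\lambda_i\dot a_i|\lesssim|\partial J|^2$ (respectively $|\delta J|^2$ under \eqref{Yamabe_flow}), which \emph{is} integrable; hence both $K(a_i)$ and $|\nabla K_i|^2$ converge.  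To see that the limit of $|\nabla K_i|^2$ is zero, use that the leading term in $\partial_tK(a_i)$ is $+c\,\tfrac{|\nabla K_i|^2}{K_i\lambda_i^2}$ (with the remaining cross and interaction terms absorbed into $O(|\partial J|^2)$ by Young's inequality and the principal lower bound), so boundedness of $K(a_i)$ forces $\int\tfrac{|\nabla K_i|^2}{\lambda_i^2}<\infty$; were $|\nabla K_i|^2$ bounded below, this would give $\int\lambda_i^{-2}<\infty$, and feeding that back into the $\dot\lambda_i/\lambda_i$ equation yields $\int|\dot\lambda_i/\lambda_i|<\infty$, contradicting $\lambda_i\to\infty$.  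This is the missing idea your sketch lacks.
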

\begin{proof}
 Cf. Proposition \ref{I-prop_unicity_of_a_limiting_critical_point_at_infinity} in \cite{may-cv}.
\end{proof}
Finally we note, that
\begin{proposition}\label{prop_princ_lower_bounded_under_Cond_n}
 $\partial J$ is principally lower bounded under Condition \ref{Condition_on_K}.
\end{proposition}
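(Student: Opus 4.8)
The plan is to verify the two inequalities of Definition \ref{def_principally_lower_bounded_of_the_first_variation} by testing the first variation $\partial J(u)$ against the bubble directions $\phi_{k,i}$ and exploiting the sign structure forced by Condition \ref{Condition_on_K}. Recall from Proposition \ref{prop_derivatives_of_J} that $\tfrac12\partial J(u)\phi_{k,i}$ is, up to the positive factor $k^{\frac{2-n}{n}}$, exactly $-\sigma_{k,i}=\int(L_{g_0}u-r\K u^{\frac{n+2}{n-2}})\phi_{k,i}$. So it suffices to bound $\sum_{k,i}|\sigma_{k,i}|$ from below by the claimed principal quantity. Proposition \ref{prop_simplifying_ski} already gives the leading expansions of $\sigma_{2,i}$ and $\sigma_{3,i}$; what remains is to show these leading terms cannot all cancel simultaneously, which is where Condition \ref{Condition_on_K} enters.

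First I would treat the case $u\in V(p,\varepsilon)$, i.e. $\omega=0$, since this is the relevant one (the argument in the main theorem shows $\omega=0$). Here the leading part of $\sigma_{3,i}$ is (a positive multiple of) $\frac{\nabla K_i}{\lambda_i}$ plus an interaction sum, and the leading part of $\sigma_{2,i}$ is a combination of $\frac{H_i}{\lambda_i^{n-2}}$, $\frac{\Delta K_i}{\lambda_i^2}$ and an interaction sum. I would argue pointwise near each critical point of $K$. Away from $\{\nabla K=0\}$ the term $\frac{|\nabla K_i|}{\lambda_i}$ alone dominates and (ii)$_{\omega=0}$-type control is immediate. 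Near a critical point $x_\ell$ with $\ell\neq 0$, Condition \ref{Condition_on_K}(iii) gives $\Delta K(x_\ell)>0$, so $\sigma_{2,i}$ has a definite sign from the $\frac{\Delta K_i}{\lambda_i^2}$ term once we are close enough (the $H_i/\lambda_i^{n-2}=O(\lambda_i^{2-n})$ term is of lower order in $n=5$ since $\Delta K_i$ is bounded below, while $\lambda_i^{-2}\gg\lambda_i^{-3}$), and combined with $\frac{|\nabla K_i|}{\lambda_i}$ from $\sigma_{3,i}$ we recover the full principal quantity including $\lambda_i^{2-n}$. Near $x_0$, using the conformal normal coordinate form $K(x)=1-|x|^4$ from Condition \ref{Condition_on_K}(iv), one computes $\Delta K<0$, $\nabla\Delta K(x)=\tfrac{2}{n+2}|\Delta K|^2$-type relation as quoted in the text, and $H_{x_0}(x_0)>0$ by Condition \ref{Condition_on_K}(i) and the positive mass theorem; then the combination $d_2\tfrac{H_i}{\lambda_i^{n-2}}+e_2\tfrac{r\alpha_i^{(n+2)/(n-2)}}{k}\tfrac{\Delta K_i}{\lambda_i^2}$ in $\sigma_{2,i}$ — even though $\Delta K_i<0$ — together with $\sigma_{3,i}$ (which picks up $\tfrac{\nabla K_i}{\lambda_i}$ and, at $a_i$ exactly at $x_0$ where $\nabla K_i=0$, the term $\tfrac{\nabla\Delta K_i}{\lambda_i^3}$) must be shown not to vanish identically. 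The interaction sums $\sum_j\alpha_j\lambda_i\partial_{\lambda_i}\varepsilon_{i,j}$ and $\sum_j\frac{\alpha_j}{\lambda_i}\nabla_{a_i}\varepsilon_{i,j}$ are handled by the standard observation that for a configuration of $p$ bubbles one can order $\lambda_1\le\cdots\le\lambda_p$ and test against $\phi_{2,1}$ (or $\phi_{2,p}$) where the interaction terms all carry the same sign and hence add rather than cancel, so $\sum_{i,j}\varepsilon_{i,j}$ is controlled; this is the classical Bahri–Coron type argument.

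The main obstacle I expect is the delicate balancing at $x_0$: because we only \emph{slightly} violate $\mathrm{Cond}_5$, the coefficients are tuned so that $\langle\nabla\Delta K,\nabla K\rangle=\tfrac{2}{n+2}|\Delta K|^2<\tfrac13|\Delta K|^2$, and one has to check that the specific linear combination of $H_i/\lambda_i^{n-2}$, $\Delta K_i/\lambda_i^2$, $\nabla K_i/\lambda_i$ and $\nabla\Delta K_i/\lambda_i^3$ appearing in $\sigma_{2,i},\sigma_{3,i}$ still admits a uniform lower bound of the required form. Concretely, near $x_0$ write $a_i$ in normal coordinates, so $|\nabla K_i|\sim|a_i|^3$, $\Delta K_i\sim-|a_i|^2$ (up to constants, for $K=1-|x|^4$ in dimension $n=5$), $|\nabla\Delta K_i|\sim|a_i|$, and $H_i$ bounded below by a positive constant; then the vector $\big(\tfrac{H_i}{\lambda_i^{3}}-c\tfrac{|a_i|^2}{\lambda_i^2},\ \tfrac{|a_i|^3}{\lambda_i}+c'\tfrac{|a_i|}{\lambda_i^3}\big)$ has Euclidean norm bounded below by $c''\big(\tfrac{1}{\lambda_i^3}+\tfrac{|a_i|^2}{\lambda_i^2}+\tfrac{|a_i|}{\lambda_i^3}\big)\gtrsim\tfrac{|\nabla K_i|}{\lambda_i}+\tfrac{|\Delta K_i|}{\lambda_i^2}+\lambda_i^{2-n}$ — the point being that the $H_i/\lambda_i^3$ term can only be killed by $|a_i|^2/\lambda_i^2$ if $|a_i|^2\sim\lambda_i^{-1}$, but then the $|a_i|^3/\lambda_i$ and $|a_i|/\lambda_i^3$ entries of the $\sigma_3$-component are of order $\lambda_i^{-5/2}$ and $\lambda_i^{-7/2}$ respectively, which still dominate what is needed. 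I would organize this as a finite case distinction on whether $|a_i|\lambda_i^{1/2}$ is large or small. Once this lower bound is obtained locally near each point of $\{\nabla K=0\}$ and combined with the trivial bound away from critical points and the interaction-sum control, summing over $i$ yields (i); the case $\omega>0$ is analogous but simpler since there the leading term of $\sigma_{2,i}$ is $\frac{\alpha\omega_i}{\lambda_i^{(n-2)/2}}$ with a definite sign ($\omega>0$), giving $\lambda_i^{(2-n)/2}$ directly, and $\sigma_{3,i}$ gives $\frac{|\nabla K_i|}{\lambda_i}$, so (ii) follows immediately.
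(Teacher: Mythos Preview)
Your overall plan is correct and mirrors the paper's: reduce to the expansions of $\sigma_{2,i},\sigma_{3,i}$ from Proposition~\ref{prop_simplifying_ski}, handle interactions by an ordering-plus-weights argument, and verify near each critical point of $K$ that the principal terms cannot all be small. Two points of comparison are worth flagging.

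\emph{Near $x_0$.} You propose to bound the Euclidean norm of $(\sigma_{2,i},\sigma_{3,i})$ from below by a case distinction on $|a_i|\lambda_i^{1/2}$; this works (the three regimes $|a_i|^2\lambda_i$ small, comparable, or large relative to $H_i$ each give one component dominating). The paper instead builds a \emph{single positive} linear combination $\frac{\sigma_{2,i}}{\alpha_i}+\kappa_i\langle\frac{\sigma_{3,i}}{\alpha_i},\frac{\nabla K_i}{|\nabla K_i|}\rangle$ and dispatches the bad $\gamma_2\frac{\Delta K_i}{\lambda_i^2}$ term via the identity $\langle\nabla\Delta K_i,\nabla K_i\rangle=\frac{2}{n+2}|\Delta K_i|^2$ and AM--GM, namely $\frac{|\Delta K_i|}{\lambda_i^2}\le\frac12\sqrt{\frac{n+2}{2}}\bigl(\frac{|\nabla K_i|}{\lambda_i}+\frac{\langle\nabla\Delta K_i,\nabla K_i\rangle}{|\nabla K_i|\lambda_i^3}\bigr)$, after which choosing $\kappa_i$ with $\gamma_3\kappa_i,\gamma_4\kappa_i>\frac12\sqrt{\frac{n+2}{2}}\gamma_2$ makes the whole expression $\ge c(\frac{H_i}{\lambda_i^{n-2}}+\frac{|\Delta K_i|}{\lambda_i^2}+\frac{|\nabla K_i|}{\lambda_i})$. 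This avoids your case split entirely and yields a single test direction against which $\partial J$ has a sign.

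\emph{Interactions.} Your suggestion to ``test against $\phi_{2,1}$ (or $\phi_{2,p}$)'' is too coarse: that controls only $\sum_j\varepsilon_{1,j}$, not the full $\sum_{r\neq s}\varepsilon_{r,s}$, and it entangles with the local $\frac{\Delta K_1}{\lambda_1^2}$ term of that one bubble. The actual Bahri--Coron mechanism the paper uses is the weighted sum $\sum_i C^i(\frac{\sigma_{2,i}}{\alpha_i}+\kappa_i\langle\frac{\sigma_{3,i}}{\alpha_i},\frac{\nabla K_i}{|\nabla K_i|}\rangle)$ with $C\gg1$ and the ordering $\frac{1}{\lambda_1}\ge\cdots\ge\frac{1}{\lambda_p}$, so that the sign identity $-\lambda_i\partial_{\lambda_i}\varepsilon_{i,j}\ge\frac{n-2}{4}\varepsilon_{i,j}$ for $i>j$ and the geometric growth of the weights produce $-\sum_{i\neq j}C^i\frac{\alpha_j}{\alpha_i}\lambda_i\partial_{\lambda_i}\varepsilon_{i,j}\ge c\sum_{i>j}C^i\varepsilon_{i,j}$, while the $\frac{1}{\lambda_i}\nabla_{a_i}\varepsilon_{i,j}$ terms coming in through $\sigma_{3,i}$ are shown to be $O(\sum_{i>j}C^j\varepsilon_{i,j})$ and hence absorbed for $C$ large. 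You should replace your single-direction test by this weighted combination.
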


\begin{proof}
We just have to adapt the corresponding proof of Proposition 
\ref{I-prop_princ_lower_bounded_under_Cond_n} in \cite{may-cv} to this situation.  
In case $\omega =0$ Propositions
\ref{prop_a-priori_estimate_on_v}, 
\ref{prop_simplifying_ski}  and  
\eqref{small_difference} show 
\begin{enumerate}[label=(\roman*)]
 \item \quad 
$
\sigma_{2,i}
= 
\tilde \gamma_{1}\alpha_{i}\frac{ H_{i}}{\lambda_{i} ^{n-2}}
+
\gamma_{2}\alpha_{i}\frac{\lap K_{i}}{K_{i}\lambda_{i}^{2}} 
- 
\tilde \gamma_{5}
\sum_{i \neq j=1}^{p}\alpha_{j}
\lambda_{i}\partial_{\lambda_{i}}\varepsilon_{i,j} 
$
 \item \quad 
$
\sigma_{3,i}
= 
\tilde \gamma_{3}\alpha_{i}\frac{\nabla K_{i}}{K_{i}\lambda_{i}}
+
\gamma_{4}\alpha_{i}\frac{\nabla \lap K_{i}}{K_{i}\lambda_{i}^{3}} 
+
\gamma_{6}
\sum_{i \neq j=1}^{p}
\frac{\alpha_{j}}{\lambda_{i}}\nabla_{a_{i}}\varepsilon_{i,j} 
$
\end{enumerate}
up to some 
$
o_{\varepsilon}
(
\lambda_{i}^{2-n} 
+
\sum_{i\neq j=1}^{p}\varepsilon_{i,j}
)
$
and
\begin{equation*}
\begin{split}
O
(
\sum_{r} \frac{\vert \nabla K_{r}\vert^{2}}{\lambda_{r}^{2}} 
+
\frac{\vert \lap K_{r}\vert^{2}}{\lambda_{r}^{4}}
+
\frac{1}{\lambda_{r}^{2(n-2)}} +\sum_{r\neq s}\varepsilon_{r,s}^{2}
+
\vert \partial J(u)\vert^{2}
)
.                               
\end{split}
\end{equation*} 
Letting $0<\underline \kappa\leq \kappa_{i} \leq \overline\kappa <\infty$ for $\vert \nabla K_{i}\vert \neq 0$
and $\kappa_{i}=0$ for $\vert \nabla K_{i}\vert=0$ we get
\begin{equation}\label{testfunction_unicity>...}
\begin{split}
\sum_{i} & C^{i}(\frac{\sigma_{2,i}}{\alpha_{i}}
+\kappa_{i}\langle \frac{\sigma_{3,i}}{\alpha_{i}}, \frac{\nabla K_{i}}{\vert \nabla K_{i}\vert}\rangle ) \\
\geq &
\sum_{i}C^{i}
[
\gamma_{1}\frac{H_{i}}{\lambda_{i}^{n-2}}
+
\gamma_{2}\frac{\lap K_{i}}{K_{i}\lambda_{i}^{2}}
+
\gamma_{3}\kappa_{i}\frac{\vert \nabla K_{i}\vert}{K_{i}\lambda_{i}}
+
\gamma_{4}\kappa_{i}\frac{\langle \nabla \lap K_{i}, \nabla K_{i}\rangle}{K_{i}\vert \nabla K_{i}\vert\lambda_{i}^{3}}
] \\
& -
\tilde \gamma_{5}\sum_{i\neq j}C^{i}\frac{\alpha_{j}}{\alpha_{i}}
\lambda_{i}\partial_{\lambda_{i}} \varepsilon_{i,j}
+
o_{\varepsilon}(\sum_{r\neq s}\varepsilon_{r,s})
+
O
(
\sum_{i\neq j}\frac{C^{i}}{\lambda_{i}}\vert \nabla_{a_{i}}\varepsilon_{i,j}\vert
)\\
& +
O
(
\sum_{r}\frac{\vert \lap K_{r}\vert^{2}}{\lambda_{r}^{4}}
+
\vert \partial J(u)\vert^{2}
)
.
\end{split}
\end{equation} 
Ordering 
$
\frac{1}{\lambda_{1}}\geq \ldots\geq \frac{1}{\lambda_{p}}
$
we then have for 
$\varepsilon\ll 1$ and $C\gg 1$  
\begin{equation}\label{eij_large_unicity}
\begin{split}
-\sum_{i\neq j}C^{i}\frac{\alpha_{j}}{\alpha_{i}}\lambda_{i}\partial_{\lambda_{i}}\varepsilon_{i,j}
\geq c \sum_{i>j}C^{i}\varepsilon_{i,j}
\end{split}
\end{equation}
and
\begin{equation}\label{eij_small_unicity}
\begin{split}
\sum_{i\neq j}\frac{C^{i}}{\lambda_{i}}\vert \nabla_{a_{i}}\varepsilon_{i,j}\vert =O(\sum_{i>j}C^{j}\varepsilon_{i,j}).
\end{split}
\end{equation}

To prove \eqref{eij_large_unicity} and \eqref{eij_small_unicity} note, that
\begin{equation*}\begin{split}
& \sum_{i\neq j} C^{i}  \frac{\alpha_{j}}{\alpha_{i}}\lambda_{i}\partial_{\lambda_{i}}\varepsilon_{i,j} 
 = 
\sum_{i>j}
[
C^{i}\frac{\alpha_{j}}{\alpha_{i}}
-
C^{j}\frac{\alpha_{i}}{\alpha_{j}}
]
\lambda_{i}\partial_{\lambda_{i}}\varepsilon_{i,j} 
+
\sum_{i<j}C^{i}\frac{\alpha_{j}}{\alpha_{i}}
[
\lambda_{i}\partial_{\lambda_{i}}\varepsilon_{i,j}
+
\lambda_{j}\partial_{\lambda_{j}}\varepsilon_{i,j}
].
\end{split}\end{equation*}

One has
$
-
\lambda_{i}\partial_{\lambda_{i}}\varepsilon_{i,j}
-
\lambda_{j}\partial_{\lambda_{j}}\varepsilon_{i,j}=(n-2)\varepsilon_{i,j}^{\frac{n}{n-2}}\lambda_{i}\lambda_{j}\gamma_{n}G^{\frac{2}{2-n}}( a _{i}, a _{j})
>0
$
and 
\begin{equation}\begin{split}\label{interaction_sign}
-\lambda_{i}\partial_{\lambda_{i}}\varepsilon_{i,j}
= &
\frac{n-2}{2}\varepsilon_{i,j}^{\frac{n}{n-2}}
(
\frac{\lambda_{i} }{ \lambda_{j} }
-
\frac{ \lambda_{j} }{ \lambda_{i} }
+
\lambda_{i}\lambda_{j}\gamma_{n}G^{\frac{2}{2-n}}( a _{i}, a _{j}))\geq 
\frac{n-2}{4}\varepsilon_{i,j}
\end{split}\end{equation}

for $i>j$. Thus \eqref{eij_large_unicity} follows. Finally note, that
\begin{equation*}
\begin{split}
\sum_{i\neq j}\frac{C^{i}}{\lambda_{i}}\vert \nabla_{a_{i}}\varepsilon_{i,j}\vert 
= &
\frac{n-2}{2}\sum_{i< j}C^{i}\varepsilon_{i,j}
\vert
\frac
{(\frac{\lambda_{j}}{\lambda_{i}})^{\frac{1}{2}}(\lambda_{i}\lambda_{j})^{\frac{1}{2}}\gamma_{n}\nabla_{ a_{i}}G^{\frac{2}{2-n}}(a_{i},a_{j})}
{\frac{\lambda_{i}}{\lambda_{j}}+\frac{\lambda_{j}}{\lambda_{i}}
+
\lambda_{i}\lambda_{j}\gamma_{n}G^{\frac{2}{2-n}}(a_{i},a_{j})} 
\vert 
\end{split} 
\end{equation*}

up to some $o(\sum_{i\neq j}\varepsilon_{i,j})$, 
whence we immediately obtain \eqref{eij_small_unicity}.
\\
Plugging \eqref{eij_large_unicity} and \eqref{eij_small_unicity} into 
\eqref{testfunction_unicity>...} we obtain for $C>1$ sufficiently large
\begin{equation*}
\begin{split}
\sum_{i} & C^{i}(\frac{\sigma_{2,i}}{\alpha_{i}}
+\kappa_{i} \langle \frac{\sigma_{3,i}}{\alpha_{i}}, \frac{\nabla K_{i}}{\vert \nabla K_{i}\vert}\rangle ) \\
\geq &
\sum_{i}C^{i}
[
\gamma_{1}\frac{H_{i}}{\lambda_{i}^{n-2}}
+
\gamma_{2}\frac{\lap K_{i}}{K_{i}\lambda_{i}^{2}}
+
\gamma_{3}\kappa_{i}\frac{\vert \nabla K_{i}\vert}{K_{i}\lambda_{i}}
+
\gamma_{4}\kappa_{i}\frac{\langle \nabla \lap K_{i}, \nabla 
K_{i}\rangle}{K_{i}\vert \nabla K_{i}\vert \lambda_{i}^{3}}
] \\
& +
\gamma_{5}\sum_{i> j}C^{i}\varepsilon_{i,j}
+
O
(
\frac{\vert \lap K_{r}\vert^{2}}{\lambda_{r}^{4}}
+
\vert \partial  J(u)\vert^{2}
).
\end{split}
\end{equation*} 
In case $\lap K_{i}\geq 0$ or $\vert \nabla K_{i}\vert >\epsilon $ for $\epsilon >0$ small 
we immediately obtain
\begin{equation}\label{non_degeneracy_under_cond_n}
\begin{split} 
\gamma_{i}\frac{H_{i}}{\lambda_{i}^{n-2}}
+
\gamma_{2}\frac{\lap K_{i}}{K_{i}\lambda_{i}^{2}}
+
\gamma_{3}\kappa_{i}\frac{\vert \nabla K_{i}\vert}{K_{i}\lambda_{i}}
& +
\gamma_{4}\kappa_{i}\frac{\langle \nabla \lap K_{i}, \nabla K_{i}\rangle}{K_{i}\vert \nabla K_{i}\vert \lambda_{i}^{3}} \\
\geq & 
c
[
\frac{H_{i}}{\lambda_{i}^{n-2}}
+
\frac{\vert \lap K_{i}\vert}{K_{i}\lambda_{i}^{2}}
+
\frac{\vert \nabla K_{i}\vert}{K_{i}\lambda_{i}}
]
\end{split}
\end{equation} 
for some $c>0$ and all $\lambda_{i}>0$ sufficiently large choosing $\kappa_{i}$ such, that
\begin{equation*}
\begin{split}
\gamma_{i}\frac{H_{i}}{\lambda_{i}^{n-2}} 
+
\gamma_{4}\kappa_{i}\frac{\langle \nabla \lap K_{i}, \nabla K_{i}\rangle}{K_{i}\vert \nabla K_{i}\vert \lambda_{i}^{3}} 
\geq 
c\frac{H_{i}}{\lambda_{i}^{n-2}}.
\end{split}
\end{equation*} 
Also \eqref{non_degeneracy_under_cond_n} follows in case
$ \lap K_{i}<0$ and $\vert \nabla K_{i} \vert<\varepsilon $, unless
\begin{equation*}
d_{g_{0}}(a_{i},x_{0})\ll 1.
\end{equation*}
In particular \eqref{non_degeneracy_under_cond_n}  follows in case 
$ \lap K_{i}<0$ and $\vert \nabla K_{i} \vert=0$, since then by Condition \ref{Condition_on_K} 
\begin{equation*}
a_{i}=x_{0}\; \text{ and }\; \nabla K_{i}=0,\Delta K_{i}=0,\; \nabla \Delta K_{i}=0.
\end{equation*}
Finally in case 
$ \lap K_{i}<0$ and $0\neq \vert \nabla K_{i} \vert<\varepsilon$ we have 
\begin{equation*}
\langle\nabla \lap K_{i}, \nabla K_{i}\rangle
=
32(n+2) \vert a_{i} \vert^{4}
=\frac{2}{n+2}\vert \Delta K_{i}\vert^{2}
\end{equation*}
and thus by Cauchy-Schwarz inequality 
\begin{equation*}
\begin{split}
\frac{\lap K_{i}}{K_{i}\lambda_{i}^{2}}
> &
-\frac{1}{2}\sqrt{\frac{n+2}{2}}\frac{\vert \nabla K_{i}\vert}{K_{i}\lambda_{i}}
-\frac{1}{2}\sqrt{\frac{n+2}{2}}
\frac
{
\langle \nabla \lap K_{i}, \nabla K_{i}\rangle
}
{
K_{i}\vert \nabla K_{i}\vert \lambda_{i}^{3}
}.
\end{split}
\end{equation*} 
Choosing therefore $\kappa_{i}$ such, that 
\begin{equation*}
\frac{1}{2}\sqrt{\frac{n+2}{2}}\gamma_{2}<\gamma_{3}\kappa_{i}
\; \text{ and } \; 
\frac{1}{2}\sqrt{\frac{n+2}{2}}\gamma_{2}<\gamma_{4}\kappa_{i},
\end{equation*}
 then \eqref{non_degeneracy_under_cond_n} holds true as well
and thus in any case. We conclude
\begin{equation*}
\begin{split}
\sum_{i} & C^{i}(\frac{\sigma_{2,i}}{\alpha_{i}}
+\kappa_{i} \langle\frac{ \sigma_{3,i}}{\alpha_{i}}, \frac{\nabla K_{i}}{\vert \nabla K_{i}\vert}\rangle ) 
\geq 
\sum_{i}
[
\frac{H_{i}}{\lambda_{i}^{n-2}}
+
\frac{\vert \lap K_{i}\vert}{K_{i}\lambda_{i}^{2}}
+
\frac{\vert \nabla K_{i}\vert}{K_{i}\lambda_{i}}
] 
+
\sum_{i> j}\varepsilon_{i,j}
\end{split}
\end{equation*} 
up to some $O(\vert \partial J(u)\vert^{2})$. Since $\sigma_{k,i}=O(\vert \partial J(u)\vert)$ by definition, the claim follows noticing $H_{i}>c>0$ due to $M\not \simeq \mathbb{S}^n$ and  by means of the positive mass theorem.  
 The case $\omega >0$ is proven analogously.
\end{proof}

\section{Divergence and Compactification}\label{sec_divergence_and_compactification}
Throughout this section we assume Condition \ref{Condition_on_K} to hold true and identify the lack of compactness of the flows on $X$ generated by \eqref{Yamabe_flow} and \eqref{gradient_flow}. Subsequently will perform a slight variation of these flows and thereby restore compactness. 
\subsection{Compact regions}\label{sec_compact_regions}
In order to describe how non compact flow lines under \eqref{Yamabe_flow} or \eqref{gradient_flow} look like, we first exclude most of the generic  possibilities of diverging flow lines within $V(\omega,p,\varepsilon)$, since 
by virtue of Propositions \ref{prop_concentration_compactness}  and  \ref{prop_unicity_of_a_limiting_critical_point_at_infinity} we know, that every non compact flow line has to remain in some $V(\omega,p,\varepsilon)$ eventually, provided $ \partial J $ is principally lower bounded, cf. Definition \ref{def_principally_lower_bounded_of_the_first_variation} and this we ensure
by Condition \ref{Condition_on_K} via 
 Proposition \ref{prop_princ_lower_bounded_under_Cond_n}. Moreover 
Lemma \ref{lem_critical_points_of_K_as_attractors} then allows us to distinguish non compact flow lines with respect to their end configuration. In fact, since we assume 
$
\{x_{0},\ldots,x_{q}\}=\{\nabla K=0\}
$
and there holds 
$$\vert \nabla K_{i}\vert =\vert \nabla K(a_{i})\vert \longrightarrow 0 \; \text{ as } \; t\longrightarrow \infty
$$ 
by virtue of Lemma \ref{lem_critical_points_of_K_as_attractors}, we find 
$a_{i}\longrightarrow x_{j_{i}}$ as $t\longrightarrow \infty$.
\begin{lemma}\label{lem_no_mixed_type}
Every non zero weak limit flow line, i.e. eventually
\begin{equation*}
u \not \in V(p,\varepsilon),
\end{equation*}
is compact. 
\end{lemma}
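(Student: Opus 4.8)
By Propositions \ref{prop_concentration_compactness} and \ref{prop_unicity_of_a_limiting_critical_point_at_infinity} — the latter being available since $\partial J$ is principally lower bounded under Condition \ref{Condition_on_K} (Proposition \ref{prop_princ_lower_bounded_under_Cond_n}) — the flow line eventually remains in a single $V(\omega,p,\varepsilon)$ for every $\varepsilon>0$, with $(\omega,p)$ depending only on the flow line. That it is a non zero weak limit flow line, i.e. eventually $u\notin V(p,\varepsilon)=V(0,p,\varepsilon)$, forces $\omega>0$. Hence it suffices to prove $p=0$: then $u_t\in V(\omega,0,\varepsilon)$ for all $\varepsilon>0$, so the flow line converges (as noted in the proof of the Theorem) and is in particular compact. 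Assume therefore $p\geq1$. Since the flow line eventually lies in $V(\omega,p,\varepsilon)$ for every $\varepsilon>0$, along it $\lambda_i^{-1},\varepsilon_{i,j}\to0$, in particular $\lambda_i\to\infty$ for each $i$, while $|\nabla K(a_i)|\to0$ by Lemma \ref{lem_critical_points_of_K_as_attractors} and $\|v\|$ is controlled through Proposition \ref{prop_a-priori_estimate_on_v}.

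The idea is that in the presence of the positive solution $\omega$ the shadow flow forces the concentration scales to decrease, which is incompatible with $\lambda_i\to\infty$. Monitor
$$F(t)=-\sum_{i=1}^{p}C^{\,r_t(i)}\ln\lambda_i(t),$$
where $r_t(\cdot)$ is the rank in the ordering $\lambda_1(t)\leq\cdots\leq\lambda_p(t)$ and $C>1$ is large, to be fixed; $F$ is locally Lipschitz, hence absolutely continuous, and at times with pairwise distinct $\lambda_i$ one has $\dot F=\sum_iC^{\,r_t(i)}(-\dot\lambda_i/\lambda_i)$. The decisive input is the $\omega>0$ case of Proposition \ref{prop_simplifying_ski}: $\sigma_{2,i}$ carries the \emph{positive} principal part $d_2\frac{r\alpha_i^{\frac{4}{n-2}}}{k}\frac{\alpha\,\omega_i}{\lambda_i^{\frac{n-2}{2}}}$ with $d_2>0$ and $\omega_i=\omega(a_i)\geq\min_M\omega>0$, while the bubble interactions enter only through $\sum_{j\neq i}\frac{\alpha_j}{\alpha_i}\lambda_i\partial_{\lambda_i}\varepsilon_{i,j}$. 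Feeding this into $\dot F$ via Proposition \ref{prop_the_shadow_flow} (equivalently via Proposition \ref{prop_simplifying_ski} and \eqref{small_difference}), running the weighted interaction estimate \eqref{eij_large_unicity}--\eqref{interaction_sign} exactly as in the proof of Proposition \ref{prop_princ_lower_bounded_under_Cond_n} — which for $C$ large turns $-\sum_iC^{\,r_t(i)}\sum_{j\neq i}\frac{\alpha_j}{\alpha_i}\lambda_i\partial_{\lambda_i}\varepsilon_{i,j}$ into a nonnegative quantity controlling $\sum_{i\neq j}\varepsilon_{i,j}$ — and absorbing the lower order errors $o_\varepsilon(\lambda_i^{\frac{2-n}{2}}+\sum_{i\neq j}\varepsilon_{i,j})$ and $O(\lambda_r^{2-n}+\varepsilon_{r,s}^2+\|v\|^2+|\nabla K_r|^2\lambda_r^{-2})$ (all of lower order since $\lambda_r\to\infty$, $\varepsilon_{r,s}\to0$, $|\nabla K_r|\to0$, using Proposition \ref{prop_a-priori_estimate_on_v} for $\|v\|^2$), one arrives at
$$\dot F(t)\ \geq\ c\sum_{i=1}^{p}\lambda_i(t)^{-\frac{n-2}{2}}\ -\ C_0\,E(t)^2\qquad\text{for a.e. large }t,$$
with constants $c,C_0>0$, where $E(t)=|\delta J(u_t)|$ under \eqref{Yamabe_flow} and $E(t)=|\partial J(u_t)|$ under \eqref{gradient_flow}.

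Integrating this over $[T,t]$ and using $\int_0^\infty E^2\,ds<\infty$ — i.e. $\int_0^\infty|\delta J(u)|^2\,ds<\infty$ under \eqref{Yamabe_flow} and $\int_0^\infty|\partial J(u)|^2\,ds<\infty$ under \eqref{gradient_flow}, both established in Section \ref{sec_introduction} — gives $F(t)\geq F(T)-C_0\int_T^\infty E^2\,ds\geq F(T)-1$ once $T$ is fixed large enough, so $F$ is bounded below on $[T,\infty)$. But $\lambda_i(t)\to\infty$ for every $i$ forces $F(t)\to-\infty$, a contradiction; hence $p=0$ and the flow line is compact. The one delicate point is that on an individual bubble the interaction contribution to $\sigma_{2,i}$, equivalently to $-\dot\lambda_i/\lambda_i$, is of indefinite sign and can a priori overwhelm the $\lambda_i^{-(n-2)/2}$ term; the remedy is exactly the geometrically weighted sum $F$, monotone in the right direction thanks to the sign of the $\varepsilon_{i,j}$-interactions under a large-$C$ ordering, as in \eqref{eij_large_unicity}. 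A minor technicality is that the ordering of the $\lambda_i$ may change in time, which is why $F$ is handled as an absolutely continuous function and the differential inequality is asserted only almost everywhere; everything else is a transcription of the \cite{may-cv}-type estimates collected in Propositions \ref{prop_simplifying_ski}, \ref{prop_the_shadow_flow} and \ref{prop_a-priori_estimate_on_v}.
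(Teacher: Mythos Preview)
Your proof is correct and follows essentially the same approach as the paper: reduce to $\omega>0$, $p\geq1$, invoke the $\omega>0$ shadow flow from Proposition \ref{prop_the_shadow_flow}, form the weighted logarithmic quantity $\psi=\sum_iC^i\ln\frac{1}{\lambda_i}$ (your $F$), and combine the positive $\omega_i\lambda_i^{\frac{2-n}{2}}$ term with the interaction estimate \eqref{eij_large_unicity} to obtain $\psi'\geq -C_0E^2$, which is incompatible with $\lambda_i\to\infty$ given $\int_0^\infty E^2<\infty$. The paper concludes slightly differently by first deducing that all $\lambda_i$ stay bounded and then invoking the principal lower bound to contradict $\vert\partial J(u)\vert\to0$, whereas you argue directly that $F$ is bounded below while $F\to-\infty$; both closures are valid and amount to the same thing.
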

\begin{proof}
Since every flow line constitutes up to a subsequence in time a Palais-Smale sequence, cf. \eqref{flow_palais_smale_yamabe_flow} and \eqref{flow_palais_smale_gradient}, Propositions \ref{prop_concentration_compactness} and \ref{prop_unicity_of_a_limiting_critical_point_at_infinity} tell us, that we may assume $u\in V(\omega,p,\varepsilon)$ for all times to come for some $V(\omega,p,\varepsilon)$ and $u\longrightarrow \omega$ strongly in case $\omega>0$ and $p=0$, in which case $u$ as a flow line is compact. Hence we may assume, that eventually $u\in V(\omega,p,\varepsilon)$ for $\omega>0$ and $p\geq 1$.  Then Proposition \ref{prop_the_shadow_flow} and the principal lower bound on $\partial J$, cf. Definition \ref{def_principally_lower_bounded_of_the_first_variation}, give 
$$
-\frac{\dot \lambda_{i}}{\lambda_{i}}
= 
\frac{r}{k}
[
\frac{d_{2}}{c_{2}}
\frac{\alpha \omega_{i}}{\alpha_{i}K_{i}\lambda_{i} ^{\frac{n-2}{2}}}
-
\frac{b_{2}}{c_{2}}\sum_{i\neq j =1}^{p}\frac{\alpha_{j}}{\alpha_{i}}
\lambda_{i}\partial_{\lambda_{i}}\varepsilon_{i,j}
](1+o_{\frac{1}{\lambda_{i}}}(1)) 
$$
up to some
\begin{equation*}  
o_{\varepsilon}(\lambda_{i}^{\frac{2-n}{2}}+\sum_{i\neq j=1}^{p}\varepsilon_{i,j})
+
\begin{cases} 

O
(
\vert \delta J(u)\vert^{2}
)
\; \text{ under } \eqref{Yamabe_flow} 
\\

O
(
\vert \partial  J(u)\vert^{2}
)
\; \text{ under } \eqref{gradient_flow} 
\end{cases}.
\end{equation*}
Then ordering 
$
\frac{1}{\lambda_{1}}\geq \ldots\geq \frac{1}{\lambda_{p}}
$
and recalling  \eqref{eij_large_unicity} and $\omega_{i}=\omega(a_{i})>0$ we find for   
$
\psi=\sum_{i}C^{i}\ln\frac{1}{ \lambda_{i}}
$
\begin{equation*}
 \psi^{\prime} \geq 
 \begin{cases}
O
(
\vert \delta J(u)\vert^{2}
)
\; \text{ under } \eqref{Yamabe_flow} 
\\
O
(
\vert \partial  J(u)\vert^{2}
)
\; \text{ under } \eqref{gradient_flow} 
\end{cases}.
\end{equation*}
Then the right hand side is integrable in time, while necessarily  $\psi \longrightarrow -\infty$
as some $\lambda_{i}\longrightarrow\infty$. Hence all  $\lambda_{i}$ have to stay bounded, which due to the principal lower bound on $\partial J$ prevents $\vert \partial J(u)\vert\longrightarrow 0,$ hence contradicting the time  integrability of
$\vert \partial J(u)\vert^{2}$. 
\end{proof}

\begin{lemma}\label{lem_no_concentration_away_from_x_0}
Every flow line away from $x_{0}$, i.e. eventually
\begin{equation*}
u \not \in V(p,\varepsilon)\cap \{ \forall \;1\leq i \leq p\;:\; a_{i}\xrightarrow{t\to \infty}  x_{0}\},
\end{equation*}
is compact. 
\end{lemma}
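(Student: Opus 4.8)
The plan is to re-run the argument of Lemma \ref{lem_no_mixed_type}, now with the positivity of $\lap K$ at the critical points $x_{1},\dots,x_{q}$ playing the role the positivity of $\omega$ played there.

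\emph{Reduction of the configuration.} By \eqref{flow_palais_smale_yamabe_flow}, \eqref{flow_palais_smale_gradient}, Propositions \ref{prop_concentration_compactness} and \ref{prop_unicity_of_a_limiting_critical_point_at_infinity} (recall $\partial J$ is principally lower bounded by Proposition \ref{prop_princ_lower_bounded_under_Cond_n}) and Lemma \ref{lem_no_mixed_type}, a non compact flow line is eventually trapped in a single $V(p,\varepsilon)=V(0,p,\varepsilon)$ with $p\geq 1$, along which $\vert \partial J(u)\vert \to 0$; hence the principal lower bound forces $\lambda_{i}\to \infty$ for every $i$, while $a_{i}\to x_{j_{i}}$ for some critical point $x_{j_{i}}$ of $K$ by Lemma \ref{lem_critical_points_of_K_as_attractors}. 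Suppose such a line is not away from $x_{0}$, i.e. not all $a_{i}\to x_{0}$. Then $I_{1}=\{i : a_{i}\to x_{j_{i}},\ j_{i}\neq 0\}\neq \emptyset$; Condition \ref{Condition_on_K} (iii) gives $\lap K_{i}=\lap K(a_{i})\to \lap K(x_{j_{i}})>0$ for $i\in I_{1}$, and $d_{g_{0}}(a_{i},a_{j})\to d_{g_{0}}(x_{j_{i}},x_{0})\geq c_{0}>0$ whenever $i\in I_{1}$, $j\in I_{0}:=\{i : a_{i}\to x_{0}\}$.

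\emph{Sign of the $\lambda_{i}$-shadow flow on $I_{1}$.} For $i\in I_{1}$, Proposition \ref{prop_the_shadow_flow} gives
\begin{equation*}
-\frac{\dot\lambda_{i}}{\lambda_{i}}
=
\frac{r}{k}\Bigl[\tfrac{d_{2}}{c_{2}}\tfrac{H_{i}}{\lambda_{i}^{n-2}}+\tfrac{e_{2}}{c_{2}}\tfrac{\lap K_{i}}{K_{i}\lambda_{i}^{2}}-\tfrac{b_{2}}{c_{2}}\sum_{j\neq i}\tfrac{\alpha_{j}}{\alpha_{i}}\lambda_{i}\partial_{\lambda_{i}}\varepsilon_{i,j}\Bigr](1+o_{\frac{1}{\lambda_{i}}}(1))
\end{equation*}
up to $o_{\varepsilon}(\lambda_{i}^{2-n}+\sum_{j}\varepsilon_{i,j})$ and an error which is $O(\vert \partial J(u)\vert^{2})$ under \eqref{gradient_flow}, $O(\vert \delta J(u)\vert^{2})$ under \eqref{Yamabe_flow}, because each of $\tfrac{\vert \nabla K_{r}\vert^{2}}{\lambda_{r}^{2}}$, $\tfrac{\vert \lap K_{r}\vert^{2}}{\lambda_{r}^{4}}$, $\lambda_{r}^{-2(n-2)}$, $\varepsilon_{r,s}^{2}$ is the square of a term in Definition \ref{def_principally_lower_bounded_of_the_first_variation} (i) and hence $\lesssim \vert \partial J(u)\vert^{2}$. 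Here $H_{i}\geq c>0$ since $M\not\simeq \mathbb{S}^{n}$, and $\tfrac{\lap K_{i}}{K_{i}\lambda_{i}^{2}}\geq \tfrac{c}{\lambda_{i}^{2}}>0$ for $i\in I_{1}$. The interaction sum is nonnegative once suitably weighted: ordering $I_{1}$ by size of $\lambda_{i}$ and taking $C\gg 1$, \eqref{eij_large_unicity} gives $-\sum_{i,j\in I_{1},\,i\neq j}C^{i}\tfrac{\alpha_{j}}{\alpha_{i}}\lambda_{i}\partial_{\lambda_{i}}\varepsilon_{i,j}\geq c\sum_{i>j,\,i,j\in I_{1}}C^{i}\varepsilon_{i,j}\geq 0$, while for $i\in I_{1}$, $j\in I_{0}$ the separation $d_{g_{0}}(a_{i},a_{j})\geq c_{0}$ keeps $\gamma_{n}G_{g_{0}}^{\frac{2}{2-n}}(a_{i},a_{j})$ bounded below, so $\lambda_{i}\lambda_{j}\gamma_{n}G_{g_{0}}^{\frac{2}{2-n}}(a_{i},a_{j})$ dominates $\frac{\lambda_{i}}{\lambda_{j}}-\frac{\lambda_{j}}{\lambda_{i}}$ once all $\lambda$'s are large and therefore $-\lambda_{i}\partial_{\lambda_{i}}\varepsilon_{i,j}=\frac{n-2}{2}\varepsilon_{i,j}(1+o(1))\geq \frac{n-2}{4}\varepsilon_{i,j}>0$, exactly as in \eqref{interaction_sign}.

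\emph{The Lyapunov argument.} For $\psi=\sum_{i\in I_{1}}C^{i}\ln\frac{1}{\lambda_{i}}$, with the $I_{1}$-ordering and $C$ as above, the $o_{\varepsilon}$-terms get absorbed into the retained good terms $\tfrac{H_{i}}{\lambda_{i}^{n-2}}+\tfrac{\lap K_{i}}{K_{i}\lambda_{i}^{2}}+\sum_{j}\varepsilon_{i,j}$ exactly as in the proof of Proposition \ref{prop_princ_lower_bounded_under_Cond_n}, whence for $\varepsilon$ small
\begin{equation*}
\psi^{\prime}=\sum_{i\in I_{1}}C^{i}\Bigl(-\frac{\dot\lambda_{i}}{\lambda_{i}}\Bigr)\geq c\sum_{i\in I_{1}}\frac{1}{\lambda_{i}^{2}}-C\vert \partial J(u)\vert^{2}\qquad(\text{resp. }-C\vert \delta J(u)\vert^{2}\text{ under }\eqref{Yamabe_flow}),
\end{equation*}
with integrable right-hand side. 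Integrating in time keeps $\psi$ bounded below, whereas $\psi\to -\infty$ because $\lambda_{i}\to \infty$ for every $i\in I_{1}\neq \emptyset$ — a contradiction. Hence every non compact flow line has all $a_{i}\to x_{0}$, i.e. every flow line away from $x_{0}$ is compact. \emph{Where the difficulty sits:} the only genuinely new point relative to Lemma \ref{lem_no_mixed_type} is the sign of the cross-interactions between a bubble concentrating at a bad critical point ($i\in I_{1}$) and one concentrating at $x_{0}$ ($j\in I_{0}$) — when $\lambda_{i}\ll \lambda_{j}$ the naive sign of $-\lambda_{i}\partial_{\lambda_{i}}\varepsilon_{i,j}$ need not be favourable, and one must exploit that $a_{i}$ and $a_{j}$ stay a fixed geodesic distance apart so that the Green's-function contribution to $\varepsilon_{i,j}$ dominates; a secondary chore is checking that every lower-order term of the shadow flow, including those carried by the $I_{0}$-bubbles which do not enter $\psi$, is either absorbed into a retained good term or controlled by the integrable $\vert \partial J\vert^{2}$ (resp. $\vert \delta J\vert^{2}$) via the principal lower bound.
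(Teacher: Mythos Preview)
Your argument is correct and follows essentially the same route as the paper: define the index set $I_{1}$ (the paper calls it $Q$) of bubbles drifting to critical points of $K$ other than $x_{0}$, use positivity of $\lap K$ there together with \eqref{eij_large_unicity} for the within-$I_{1}$ interactions and the fixed-separation argument for the $I_{1}$--$I_{0}$ cross terms, and run the Lyapunov quantity $\psi=\sum_{i\in I_{1}}C^{i}\ln\frac{1}{\lambda_{i}}$ against the integrability of $\vert\partial J\vert^{2}$ (resp.\ $\vert\delta J\vert^{2}$). One small wording slip: where you write ``Suppose such a line is \emph{not} away from $x_{0}$, i.e.\ not all $a_{i}\to x_{0}$'', you mean ``Suppose such a line \emph{is} away from $x_{0}$''.
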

\begin{proof}
We may assume  $u\in V(p,\varepsilon)$ eventually.  Then Proposition \ref{prop_the_shadow_flow} and the principal lower bound on $\partial J$, cf. Definition \ref{def_principally_lower_bounded_of_the_first_variation}, give 
$$
-\frac{\dot \lambda_{i}}{\lambda_{i}}
= 
\frac{r}{k}
[
\frac{d_{2}}{c_{2}}
\frac{ H_{i}}{\lambda_{i} ^{n-2}}
+
\frac{e_{2}}{c_{2}}\frac{\lap K_{i}}{K_{i}\lambda_{i} ^{2}} 
-
\frac{b_{2}}{c_{2}} \sum_{i \neq j=1}^{p}\frac{\alpha_{j}}{\alpha_{i}}
\lambda_{i}\partial_{\lambda_{i}}\varepsilon_{i,j} 
]
(1+o_{\frac{1}{\lambda_{i}}}(1)) 
$$
up to some
\begin{equation*}
o_{\varepsilon}  ( \lambda_{i}^{2-n}  +\sum_{i\neq j=1}^{p}\varepsilon_{i,j})
+
\begin{cases}
O
(
\vert \delta J(u)\vert^{2}
)
\; \text{ under } \eqref{Yamabe_flow} 
\\

O
(
\vert \partial  J(u)\vert^{2}
)
\; \text{ under } \eqref{gradient_flow} 
\end{cases}.
\end{equation*}
Moreover by assumption
\begin{equation*}
\{1,\ldots,p\}=P\supseteq Q=\{1\leq i \leq p\;:\; a_{i}\xrightarrow{t\to \infty} x_{j_{i}}\neq x_{0}\}\neq \emptyset. 
\end{equation*} 
Then ordering 
$\frac{1}{\lambda_{l_{1}}}\geq \ldots \geq \frac{1}{\lambda_{l_{q}}}$ for 
$Q=\{l_{1},\ldots,l_{q}\}$ we  consider
\begin{equation*}
\psi=\sum_{i=1}^{q}C^{i}\ln \frac{1}{\lambda_{l_{i}}}.
\end{equation*}
Since $\Delta K_{l_{i}}>0$ for $l_{i}\in Q$, as
$
\{\Delta K\leq 0\}\cap \{\nabla K=0\}=\{x_{0}\} ,
$
we have  
\begin{equation*}
\psi^{\prime}
\geq c\sum_{i=1}^{q}\frac{1}{\lambda_{l_{i}}^{2}}
-
\frac{r}{k}\frac{b_{2}}{c_{2}}
\sum_{Q\ni l_{i} \neq j \in P }C^{i}\frac{\alpha_{j}}{\alpha_{l_{i}}}\lambda_{l_{i}}\partial_{\lambda_{l_{i}}}\varepsilon_{l_{i},j} 
+
\begin{cases}
O
(
\vert \delta J(u)\vert^{2}
)
\; \text{ under } \eqref{Yamabe_flow} 
\\

O
(
\vert \partial  J(u)\vert^{2}
)
\; \text{ under } \eqref{gradient_flow} 
\end{cases}.
\end{equation*}
Recalling \eqref{eij_large_unicity} we then find
\begin{equation*}
-\sum_{Q\ni l_{i} \neq j \in Q }C^{i}\frac{\alpha_{j}}{\alpha_{l_{i}}}\lambda_{l_{i}}\partial_{\lambda_{l_{i}}}\varepsilon_{l_{i},j} 
\geq c\sum_{Q\ni l_{i}\neq j \in Q}\varepsilon_{l_{i},j}
\end{equation*}
and secondly
\begin{equation*}
-\sum_{Q\ni l_{i} \neq j \in P\setminus Q }C^{i}\frac{\alpha_{j}}{\alpha_{l_{i}}}\lambda_{l_{i}}\partial_{\lambda_{l_{i}}}\varepsilon_{l_{i},j} 
\geq 
c\sum_{Q\ni l_{i} \neq j \in P\setminus Q }\varepsilon_{l_{i},j},
\end{equation*}
since for $l_{i}\in Q$ and $j\in P\setminus Q$ by definition 
\begin{equation*}
d(a_{l_{i}},x_{j_{i}})\ll 1\; \text{ for some } x_{j_{i}}\neq x_{0},
\; \text{ while }\; 
d(a_{j},x_{0})\ll 1,
\end{equation*}
hence $a_{l_{i}}$ and $a_{j}$ are far from each other and therefore, cf. Lemma \ref{lem_interactions}, 
\begin{equation*}
\lambda_{l_{i}}\partial_{\lambda_{l_{i}}}\varepsilon_{l_{i},j}
=
\frac{2-n}{2}\varepsilon_{l_{i},j}^{\frac{n}{n-2}}
(
\frac{\lambda_{l_{i}}}{\lambda_{j}}-\frac{\lambda_{j}}{\lambda_{l_{i}}}
+
\lambda_{l_{i}}\lambda_{j} \gamma_{n}G_{g_{0}}^{\frac{2}{2-n}}(a_{l_{i}},a_{j})
)
=
\frac{2-n}{2}\varepsilon_{l_{i},j}(1+o(1)).
\end{equation*}
Hence, while $\psi \longrightarrow -\infty$ as some $\lambda_{l_{i}}\longrightarrow \infty$, we have
\begin{equation*}
\psi^{\prime }
\geq 
\begin{cases}
O
(
\vert \delta J(u)\vert^{2}
)
\; \text{ under } \eqref{Yamabe_flow} 
\\

O
(
\vert \partial  J(u)\vert^{2}
)
\; \text{ under } \eqref{gradient_flow}
\end{cases}
\end{equation*}
in contradiction to, that necessarily $\lambda_{i}\xrightarrow{t\to\infty}\infty$. 
\end{proof}
Lemma \ref{lem_no_concentration_away_from_x_0} tells us, that every diverging flow line can only  concentrate at $x_{0}=\max_{M}K$. We now exclude tower bubbling at $x_{0}$ as well. 
\begin{lemma}\label{lem_no_tower_bubbling_at_x_0}
Every non single bubbling flow line at $x_{0}$, i.e. 
\begin{equation*}
u \not \in V(1,\varepsilon)\cap \{a\xrightarrow{t\to \infty}  x_{0}\},
\end{equation*}
is compact. 
\end{lemma}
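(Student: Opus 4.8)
The plan is to argue by contradiction in the same spirit as Lemmas \ref{lem_no_mixed_type} and \ref{lem_no_concentration_away_from_x_0}. Suppose a non compact flow line is not a single bubble at $x_0$; combining Propositions \ref{prop_concentration_compactness} and \ref{prop_unicity_of_a_limiting_critical_point_at_infinity} with the two preceding lemmas it must eventually satisfy $u\in V(p,\varepsilon)$ for every $\varepsilon>0$, with $\omega=0$, $p\geq 2$ and $a_i\to x_0$ for all $i$, and moreover $|\partial J(u)|\to 0$ with $\int_0^\infty|\partial J(u)|^2\,dt<\infty$. By the principal lower bound of Proposition \ref{prop_princ_lower_bounded_under_Cond_n} this forces $\lambda_i\to\infty$, $\varepsilon_{i,j}\to 0$ and $|a_i|\to 0$. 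Inserting the normal form $K(x)=1-|x|^4$ of Condition \ref{Condition_on_K} into the shadow flow of Proposition \ref{prop_the_shadow_flow}, one uses that near $x_0$
\[
\Delta K_i=-4(n+2)|a_i|^2\le 0,\qquad \nabla K_i=-4|a_i|^2 a_i,\qquad \nabla\Delta K_i=-8(n+2)a_i
\]
(all vanishing at $x_0$), whereas $H_i=H_{a_i}(a_i)\ge c>0$ by the positive mass theorem since $M\not\simeq\mathbb{S}^5$.

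Next, as in the proof of Proposition \ref{prop_princ_lower_bounded_under_Cond_n}, order $\lambda_1\le\dots\le\lambda_p$ and test the $\dot\lambda_i$ equations of Proposition \ref{prop_the_shadow_flow} against $\psi=\sum_i C^i\ln\tfrac1{\lambda_i}$ with $C\gg 1$. The scale interaction contributes $\ge c\sum_{i>j}C^i\varepsilon_{i,j}$ to $\psi'$ by \eqref{eij_large_unicity}, the positive mass term contributes $\ge c\sum_i C^iH_i\lambda_i^{2-n}$, and modulo the time integrable quantity $O(|\delta J(u)|^2)$ under \eqref{Yamabe_flow} respectively $O(|\partial J(u)|^2)$ under \eqref{gradient_flow}, together with the absorbable higher order terms of Proposition \ref{prop_the_shadow_flow}, one is left with
\[
\psi'\;\ge\; c\sum_i C^i\Big(\frac{H_i}{\lambda_i^{n-2}}+\sum_{j<i}\varepsilon_{i,j}\Big)\;-\;C'\sum_i C^i\frac{|a_i|^2}{\lambda_i^{2}}.
\]
Here lies the obstruction: in contrast with Lemma \ref{lem_no_concentration_away_from_x_0}, where $\Delta K>0$ supplied exactly the sign that kills concentration, near the flat maximum $\Delta K\le 0$ and the term $\sum_i C^i|a_i|^2\lambda_i^{-2}$ has the wrong sign. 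One disposes of it by a case split on each bubble: if $\lambda_i^{n-4}|a_i|^2$ stays below a small fixed threshold then $|a_i|^2\lambda_i^{-2}\ll H_i\lambda_i^{2-n}$ and this bubble is absorbed into the mass term; if instead $\lambda_i^{n-4}|a_i|^2$ is large, one invokes the $\dot a_i$ equation --- with the above normal form $\lambda_i\dot a_i$ is, to leading order, a negative multiple of $a_i$, so $a_i$ is driven radially towards $x_0$ --- and, crucially, the presence of a second bubble: two bubbles simultaneously trying to occupy the single flat maximum $x_0$ interact at a strength that cannot be absorbed, so that for such an $i$ some interaction term $\varepsilon_{i,j}$ dominates $|a_i|^2\lambda_i^{-2}$. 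This last point is the heart of the matter and is where the tower/cluster dichotomy enters: in the tower regime (scales separating, $\lambda_i/\lambda_j\to\infty$) the scale repulsion in the $\dot\lambda$ equations pushes consecutive scales together and thereby obstructs the separation of scales that $\varepsilon_{i,j}\to 0$ would require, while in the cluster regime (comparable scales, positions separating at scale $\lambda$) the attractive positional interaction together with the attractive force $\nabla K_i$ towards the single point $x_0$ prevents the $a_i$ from separating at scale $\lambda$, again obstructing $\varepsilon_{i,j}\to 0$.

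Combining the two cases one gets $\psi'\ge -C|\partial J(u)|^2$ (respectively $-C|\delta J(u)|^2$ under \eqref{Yamabe_flow}), whose right hand side is integrable in time, so $\psi$ stays bounded below, all $\lambda_i$ remain bounded, and the principal lower bound of Proposition \ref{prop_princ_lower_bounded_under_Cond_n} yields $|\partial J(u)|\gtrsim\lambda_i^{2-n}\not\to 0$, contradicting $\int_0^\infty|\partial J(u)|^2\,dt<\infty$. Hence $p=1$, and by Lemma \ref{lem_no_concentration_away_from_x_0} the blow up point converges to $x_0$, i.e. the flow line is a single bubble at $x_0$. The main difficulty throughout is precisely the sign of $\Delta K$ at the degenerate maximum: since $K=1-|x|^4$ makes $x_0$ so flat that $\Delta K\le 0$ there, the concentration-suppressing mechanism of the previous lemma is unavailable, and the decisive positivity has to be extracted from the bubble--bubble interaction, which is what renders the multi-bubble analysis and the tower versus cluster case distinction unavoidable.
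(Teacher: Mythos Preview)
Your setup and the first half of the argument are fine: reduction to $u\in V(p,\varepsilon)$ with $p\ge 2$, $a_i\to x_0$, the shadow flow expansion, and the observation that the $\Delta K$ contribution has the wrong sign are all correct. The gap is in the ``large'' branch of your case split. You assert that when $\lambda_i|a_i|^2$ is bounded below, ``some interaction term $\varepsilon_{i,j}$ dominates $|a_i|^2\lambda_i^{-2}$'', and you justify this only by a heuristic tower/cluster discussion. This claim is not proved, and in fact it can fail against the fixed weights $C^i$ you have chosen. Take $p=2$, $a_1=a_2=a$, $\lambda_2=\lambda_1^3$, and $|a|^2=M/\lambda_1$ with $M$ large; then $\varepsilon_{1,2}\simeq(\lambda_1/\lambda_2)^{3/2}=\lambda_1^{-3}$ while $|a_1|^2/\lambda_1^2=M\lambda_1^{-3}$, so the bad term for the lowest bubble beats the interaction by a factor $M$, and no a priori bound on $M=\lambda_1|a_1|^2$ is available at this point. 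Your Lyapunov function $\psi=\sum_i C^i\ln\lambda_i^{-1}$ therefore cannot close.

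The paper does something quite different and does not attempt to dominate $|a_i|^2\lambda_i^{-2}$ by the interaction. Instead it couples the $\lambda$ and $a$ evolutions by looking at $\partial_t\ln(\lambda_i|a_i|^{5})$: from Proposition~\ref{prop_the_shadow_flow} the leading contribution is proportional to
\[
(4\cdot 7\,\gamma_2-5\cdot 4\,\gamma_3)\,\frac{|a_i|^2}{\lambda_i^2}=-32\,\gamma_2\,\frac{|a_i|^2}{\lambda_i^2},
\]
where the sign flips precisely because $\gamma_3/\gamma_2=3$. The test function is then $\Theta=\sum_iC^i\eta(\lambda_i|a_i|^5/\epsilon)\ln(\lambda_i|a_i|^5/\epsilon)$ with the bubbles ordered by $\lambda_i|a_i|^5$ (not by $\lambda_i$), and a cut-off $\eta$ that switches off the contribution on $\{\lambda_i|a_i|^5\le\epsilon\}$; this is what makes the interaction sign estimate \eqref{interactions_large_perturbed_case} go through. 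One first concludes that all $\lambda_i|a_i|^5$ stay bounded, and only then, in a second step, uses this bound to show $\dot\lambda_m/\lambda_m\le 0$ up to integrable terms for the maximal scale $\lambda_m$. The exponent $5$ and the ordering by $\lambda_i|a_i|^5$ are not incidental: they are exactly what is needed to turn the ``bad'' $\Delta K$ term into a good one, and this is the missing idea in your proposal.
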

\begin{proof}
We may assume $u\in V(p,\varepsilon)$ eventually and  $\forall_{i} \,a_{i}\longrightarrow x_{0}$.   Then Proposition \ref{prop_the_shadow_flow} and the principal lower bound on $\partial J$, cf. Definition \ref{def_principally_lower_bounded_of_the_first_variation}, give

\begin{enumerate}[label=(\roman*)] 
 \item \quad 
$
-\frac{\dot \lambda_{i}}{\lambda_{i}}
= 
\frac{r}{k}
[
\frac{d_{2}}{c_{2}}
\frac{ H_{i}}{\lambda_{i} ^{n-2}}
+
\frac{e_{2}}{c_{2}}\frac{\lap K_{i}}{K_{i}\lambda_{i} ^{2}} 
-
\frac{b_{2}}{c_{2}} \sum_{i \neq j=1}^{p}\frac{\alpha_{j}}{\alpha_{i}}
\lambda_{i}\partial_{\lambda_{i}}\varepsilon_{i,j} 
]
(1+o_{\frac{1}{\lambda_{i}}}(1)) 
$
 \item \quad 
$
\lambda_{i}\dot a_{i}
= 
\frac{r}{k}
[
\frac{e_{3}}{c_{3}}  \frac{\nabla K_{i}}{K_{i}\lambda_{i}}
+
\frac{e_{4}}{c_{3}}  \frac{\nabla \lap K_{i}}{K_{i}\lambda_{i}^{3}}
+
\frac{b_{3}}{c_{3}} \sum_{i \neq j=1}^{p}\frac{\alpha_{j}}{\alpha_{i}}
 \frac{1}{\lambda_{i}}\nabla_{a_{i}}\varepsilon_{i,j}
](1+o_{\frac{1}{\lambda_{i}}}(1)) 
$
\end{enumerate}
up to some
\begin{equation*}
o_{\varepsilon}  ( \lambda_{i}^{2-n}  +\sum_{i\neq j=1}^{p}\varepsilon_{i,j})
+ 
\begin{cases}
O
(
\vert \delta J(u)\vert^{2}
)
\; \text{ under } \eqref{Yamabe_flow} 
\\
O
(
\vert \partial  J(u)\vert^{2}
)
\; \text{ under } \eqref{gradient_flow} 
\end{cases}.
\end{equation*}
More precisely by Condition \ref{Condition_on_K} 
and  recalling $K_{i}=K(a_{i})$ et cetera we have
\begin{equation*}\begin{split}
\nabla K_{i}=-4\vert a_{i}\vert^{2}a_{i},
\,
\Delta K_{i}=-4\cdot 7\vert a_{i} \vert^{2}
\; \text{ and }\;
\nabla\Delta K_{i}=-8\cdot 7 a_{i}.
\end{split}\end{equation*}
Consequently putting 
$\frac{d_{2}}{c_{2}}=\gamma_{1},\; \frac{e_{2}}{c_{2}}=\gamma_{2},\; \gamma_{3}=\frac{e_{3}}{c_{3}}$
and $b=\frac{b_{2}}{c_{2}}$ we find
\begin{enumerate}[label=(\roman*)] 
 \item \quad 
$
-\frac{\dot \lambda_{i}}{\lambda_{i}}
= 
\frac{r}{k}
[
\gamma_{1}
\frac{ H_{i}}{\lambda_{i} ^{n-2}}
-
4\cdot 7
\gamma_{2}\frac{\vert a _{i} \vert^{2}}{\lambda_{i} ^{2}} 
-
b \sum_{i \neq j=1}^{p}\frac{\alpha_{j}}{\alpha_{i}}
\lambda_{i}\partial_{\lambda_{i}}\varepsilon_{i,j} 
]
(1+o_{\frac{1}{\lambda_{i}}+\vert a_{i}\vert}(1)) 
$
 \item \quad 
$ 
\lambda_{i}\dot a_{i}
= 
\frac{r}{k}
[
-4\gamma_{3}\frac{\vert a_{i}\vert^{2}a_{i}}{\lambda_{i}}
+
O
( 
\sum_{i \neq j=1}^{p}
\vert \frac{\nabla_{a_{i}}}{\lambda_{i}}\varepsilon_{i,j}\vert
)
](1+o_{\frac{1}{\lambda_{i}}+\vert a_{i}\vert}(1)) 
$
\end{enumerate}
up to some
\begin{equation*}
o_{\varepsilon}  ( \lambda_{i}^{2-n}  +\sum_{i\neq j=1}^{p}\varepsilon_{i,j})
+ 
\begin{cases}
O
(
\vert \delta J(u)\vert^{2}
)
\; \text{ under } \eqref{Yamabe_flow} 
\\

O
(
\vert \partial  J(u)\vert^{2}
)
\; \text{ under } \eqref{gradient_flow} 
\end{cases}.
\end{equation*}
We first order 
$\lambda_{1}\vert a_{1}\vert^{5}\leq \ldots \leq  \lambda_{p}\vert a_{p}\vert^{5}$ and study  for $C \gg 1\gg \epsilon>0$
\begin{equation}
\Theta 
=
\sum_{i}C^{i}\eta(\frac{\lambda_{i}\vert a_{i}\vert^{5}}{\epsilon})\ln\frac{\lambda_{i}\vert a_{i}\vert^{5}}{\epsilon}
\end{equation} 
with a cut-off function $\eta\in C^{\infty}(\R,[0,1])$ satisfying 
\begin{equation*}
\eta\lfloor_{(0,1)}=0,\; \eta\lfloor_{(2,\infty)}=1\; \text{ and }\; \eta^{\prime}\lfloor_{(1,2)}>0. 
\end{equation*}
Then clearly $\Theta\geq 0$ and there holds
\begin{equation*}
\Theta^{\prime}
=
\sum_{i}C^{i}\vartheta_{i}\partial_{t}\ln(\lambda_{i}\vert a_{i}\vert^{5}),
\end{equation*}
where 
\begin{equation}\label{vartheta}
\vartheta_{i}
=
\eta(\frac{\lambda_{i}\vert a_{i}\vert^{5}}{\epsilon})
+
\eta^{\prime}(\frac{\lambda_{i}\vert a_{i}\vert^{5}}{\epsilon})
\frac{\lambda_{i}\vert a_{i}\vert^{5}}{\epsilon}
\ln \frac{\lambda_{i}\vert a_{i}\vert^{5}}{\epsilon}
\end{equation} and hence
\begin{equation*}
\vartheta_{i}
\; \text{ is }\; 
\begin{cases}
=0\; \text{ on }\;\lambda_{i}\vert a_{i}\vert^{5}\leq \epsilon \\
>0\; \text{ on }\; \epsilon<\lambda_{i}\vert a_{i}\vert^{5}\leq 2\epsilon \\
= 1\; \text{ on }\; \lambda_{i}\vert a_{i}\vert^{5}\geq 2\epsilon
\end{cases}.
\end{equation*} 
We then find 
\begin{equation*}
\begin{split}
\Theta^{\prime}
= &
\sum_{i}C^{i}\vartheta_{i}
(
\frac{\dot \lambda_{i}}{\lambda_{i}}
+
5\vert a_{i}\vert^{-2}\langle \frac{a_{i}}{\lambda_{i}},\lambda_{i}\dot a_{i}\rangle ) \\
\leq  &
\frac{r}{k}
\sum_{i}C^{i}\vartheta_{i}(1+o_{\varepsilon}(1))  
(
(
4\cdot 7
\gamma_{2}
-
5\cdot 4 \gamma_{3}
)
\frac{\vert a _{i} \vert^{2}}{\lambda_{i} ^{2}} \\
& \quad\quad\quad\quad\quad\quad\quad\quad\quad\quad\quad\quad\;\;\, +
b \sum_{i \neq j=1}^{p}\frac{\alpha_{j}}{\alpha_{i}}
\lambda_{i}\partial_{\lambda_{i}}\varepsilon_{i,j} 
+
o_{\varepsilon}  ( \sum_{i\neq j=1}^{p}\varepsilon_{i,j})
)  
\end{split}
\end{equation*}
up to some
\begin{equation*} 
\begin{cases}
O
(
\vert \delta J(u)\vert^{2}
)
\; \text{ under } \eqref{Yamabe_flow} 
\\

O
(
\vert \partial  J(u)\vert^{2}
)
\; \text{ under } \eqref{gradient_flow} 
\end{cases}.
\end{equation*}
Due to  $\frac{\gamma_{3}}{\gamma_{2}}=3$, cf.  the proof of Proposition  \ref{I-prop_n=5} in \cite{may-cv},
we have 
\begin{equation*}
4\cdot 7 \gamma_{2}-5\cdot 4 \gamma_{3}=-32\gamma_{2}
\end{equation*}
and there holds, cf. \eqref{interaction_sign} and arguing as for \eqref{eij_large_unicity}, for $i>j$
\begin{equation}\label{interactions_large_perturbed_case}
-\vartheta_{i}\lambda_{i}\partial_{\lambda_{i}}\varepsilon_{i,j} 
\geq c\vartheta_{i}\varepsilon_{i,j}
\; \text{ and  }\;  
-\sum_{i \neq  j}C^{i}\vartheta_{i}\frac{\alpha_{j}}{\alpha_{i}}
\lambda_{i}\partial_{\lambda_{i}}\varepsilon_{i,j} 
\geq
c\sum_{i\neq j}C^{i}\vartheta_{i}\varepsilon_{i,j},
\end{equation} 
as we shall prove below. 
We thus obtain
\begin{equation}\label{Theta_prime}
\begin{split}
\Theta^{\prime}
\leq  & 
-
c
\sum_{i}C^{i}\vartheta_{i}
(
\frac{\vert a _{i} \vert^{2}}{\lambda_{i} ^{2}} 
+
\sum_{i \neq j=1}^{p} \varepsilon_{i,j} 
)  
+
\begin{cases}
O
(
\vert \delta J(u)\vert^{2}
)
\; \text{ under } \eqref{Yamabe_flow} 
\\
O
(
\vert \partial  J(u)\vert^{2}
)
\; \text{ under } \eqref{gradient_flow} 
\end{cases}.
\end{split}
\end{equation}
As a consequences  $\Theta$, hence all $\lambda_{i}\vert a_{i}\vert^{5}$ are bounded and  
\begin{equation}\label{integrability_condition_tower_bubbles}
\forall\; 1\leq i \leq p\;:\;\int^{\infty}_{t=0}(\frac{\vert a_{i}\vert^{2}}{\lambda_{i}^{2}}+\sum_{i\neq j=1}^{p}\varepsilon_{i,j})\chi_{\{\lambda_{i}\vert a_{i}\vert^{5}\geq 2\varepsilon\}}
< 
\infty. 
\end{equation}
On the other hand for all $1\leq i \leq p$
\begin{equation*}
\frac{\dot \lambda_{i}}{\lambda_{i}}
\lesssim 
\frac{\vert a _{i} \vert^{2}}{\lambda_{i} ^{2}} 
+
\sum_{i \neq j=1}^{p}
\varepsilon_{i,j} 
+
\begin{cases}
O
(
\vert \delta J(u)\vert^{2}
)
\; \text{ under } \eqref{Yamabe_flow} 
\\

O
(
\vert \partial  J(u)\vert^{2}
)
\; \text{ under } \eqref{gradient_flow} 
\end{cases},
\end{equation*}
whence $\lambda_{i}\longrightarrow \infty$ due to \eqref{integrability_condition_tower_bubbles} necessitates, that for some $t_{k,i}\xrightarrow{k\to \infty}\infty$ at least
\begin{equation*}
\lambda_{i}\vert a_{i}\vert^{5} \leq 2\varepsilon \; \text{ at }\; t=t_{k,i},
\end{equation*}
while arguing as before on $\{\lambda_{i}\vert a_{i}\vert^{5}\geq 2\varepsilon\}$ 
\begin{equation*}
\partial_{t}\ln(\lambda_{i}\vert a_{i}\vert^{5})
\lesssim
\frac{\vert a_{i}\vert^{2}}{\lambda_{i}^{2}}+\sum_{i\neq j =1}^{\infty}\varepsilon_{i,j}
+
\begin{cases}
O
(
\vert \delta J(u)\vert^{2}
)
\; \text{ under } \eqref{Yamabe_flow} 
\\

O
(
\vert \partial  J(u)\vert^{2}
)
\; \text{ under } \eqref{gradient_flow} 
\end{cases}.
\end{equation*}
Hence we may assume, that eventually 
$\forall\; 1\leq i \leq p\;:\; \lambda_{i}\vert a_{i}\vert^{5}\leq 4\varepsilon$, 
 thus
\begin{equation*}
\begin{split}
\varepsilon_{i,j}
\gtrsim &
(\frac{\lambda_{i}}{\lambda_{j}}+\frac{\lambda_{j}}{\lambda_{i}}+\lambda_{i}\lambda_{j}\vert a_{i}-a_{j}\vert^{2})^{\frac{2-n}{2}}
\gtrsim 
(\frac{\lambda_{i}}{\lambda_{j}}+\frac{\lambda_{j}}{\lambda_{i}}+\lambda_{i}\lambda_{j}(
\vert a_{i}\vert^{2} + \vert a_{j}\vert^{2}))^{-\frac{3}{2}} \\
\gtrsim &
(\frac{\lambda_{i}}{\lambda_{j}}+\frac{\lambda_{j}}{\lambda_{i}}
+
\lambda_{i}\lambda_{j}
(
\frac{\lambda_{i}\vert a_{i}\vert^{5}}{\lambda_{i}} 
+
\frac{\lambda_{j} \vert a_{j}\vert^{5}}{\lambda_{j}}
)^{\frac{2}{5}})^{-\frac{3}{2}} 
\gtrsim
\varepsilon^{-\frac{3}{5}}
(
\lambda_{i}^{1-\frac{2}{5}}\lambda_{j}
+
\lambda_{i}\lambda_{j}^{1-\frac{2}{5}}
)^{-\frac{3}{2}} 
\end{split}
\end{equation*}
and likewise
$
\frac{\vert a_{i}\vert^{2}}{\lambda_{i}^{2}}\leq \frac{\varepsilon^{\frac{2}{5}}}{\lambda_{i}^{2+\frac{2}{5}}}.
$
Recalling \eqref{interaction_sign} we therefore obtain for $\lambda_{m}=\max_{i}\lambda_{i}$  
\begin{equation*}
\begin{split}
\frac{\dot \lambda_{m}}{\lambda_{m}}
\leq &  
\tilde \gamma_{2}\frac{\vert a _{m} \vert^{2}}{\lambda_{m} ^{2}} 
-
\tilde \gamma_{4} \sum_{m \neq j=1}^{p}\varepsilon_{m,j}      
\lesssim
\hat\gamma_{2}
\frac{\varepsilon^{\frac{2}{5}}}{\lambda_{m} ^{2+\frac{2}{5}}} 
-
\hat \gamma_{4}
 \sum_{m \neq j=1}^{p}
\frac{\varepsilon^{-\frac{3}{5}}}{(\lambda_{j}^{1-\frac{2}{5}}\lambda_{m}+\lambda_{j}\lambda_{m}^{1-\frac{2}{5}})^{\frac{3}{2}}} \\
\lesssim &
\bar \gamma_{2}
\frac{\varepsilon^{\frac{2}{5}}}{\lambda_{m} ^{2+\frac{2}{5}}} 
-
\bar \gamma_{4}
\frac{\varepsilon^{-\frac{3}{5}}}{\lambda_{m}^{\frac{3}{2}(2-\frac{2}{5})}}
= 
\frac{\check \gamma_{2}\varepsilon^{\frac{2}{5}}-\check \gamma_{4}\varepsilon^{-\frac{3}{5}}}{\lambda_{m} ^{2+\frac{2}{5}}} 
\leq 0
\end{split}
\end{equation*}  
up to some 
$$\begin{cases}
O
(
\vert \delta J(u)\vert^{2}
)
\; \text{ under } \eqref{Yamabe_flow} 
\\
O
(
\vert \partial  J(u)\vert^{2}
)
\; \text{ under } \eqref{gradient_flow}  
\end{cases}.
$$
So $\lambda_{m}\longrightarrow \infty$ is impossible and we are left with proving
\eqref{interactions_large_perturbed_case}. Recalling 
\begin{equation*}
\lambda_{1}\vert a_{1}\vert^{5}\leq \ldots \leq  \lambda_{p}\vert a_{p}\vert^{5}
\end{equation*}
we have for $i>j$
\begin{equation*}\begin{split}
-\lambda_{i}\partial_{\lambda_{i}}\varepsilon_{i,j}
= &
\frac{n-2}{2}\varepsilon_{i,j}^{\frac{n}{n-2}}
(
\frac{\lambda_{i} }{ \lambda_{j} }
-
\frac{ \lambda_{j} }{ \lambda_{i} }
+
\lambda_{i}\lambda_{j}\gamma_{n}G^{\frac{2}{2-n}}( a _{i}, a _{j}))
\end{split}\end{equation*}
and hence $-\lambda_{i}\partial_{\lambda_{i}}\varepsilon_{i,j}\geq \frac{n-2}{4}\varepsilon_{i,j}$ in either of the cases
\begin{equation*}
\lambda_{i}\geq \lambda_{j}
\; \text{ or } \;
\lambda_{i}\lambda_{j}d^{2}_{g_{0}}(a_{i},a_{j})\geq \frac{\lambda_{j}}{\lambda_{i}}.
\end{equation*}
Hence we may assume $d_{g_{0}}(a_{i},a_{j})\leq \frac{1}{\lambda_{i}}$ and 
$\frac{\lambda_{j}}{\lambda_{i}}\gg 1$.  Since for $i>j$ by assumption
\begin{equation*}
\lambda_{i}\vert a_{i}\vert^{5}\geq \lambda_{j}\vert a_{j}\vert^{5},
\end{equation*}
we then have $\vert a_{i}\vert \gg \vert a_{j}\vert$ and hence 
$d_{g_{0}}(a_{i},a_{j})\simeq \vert a_{i}-a_{j}\vert \simeq \vert a_{i}\vert$.   Therefore 
\begin{equation*}
\lambda_{i}\vert a_{i}\vert^{5}\simeq \lambda_{i}d_{g_{0}}^{5}(a_{i},a_{j})\lesssim \frac{1}{\lambda_{i}^{4}}.
\end{equation*}
However $\vartheta_{i}=0$ on $\{\lambda_{i}\vert a_{i}\vert^{5}\leq \varepsilon\}$ and we conclude 
\begin{equation*}
-\vartheta_{i}\lambda_{i}\partial_{\lambda_{i}}\varepsilon_{i,j}\geq \vartheta_{i}\frac{n-2}{4}\varepsilon_{i,j}.
\end{equation*}
This show the first statement of \eqref{interactions_large_perturbed_case}.  We then compute
\begin{equation*}\begin{split}
- & \sum_{i\neq j} C^{i} \vartheta_{i} \frac{\alpha_{j}}{\alpha_{i}}\lambda_{i}\partial_{\lambda_{i}}\varepsilon_{i,j} 
\\
& = 
-
\sum_{i>j}
[
C^{i}\vartheta_{i}\frac{\alpha_{j}}{\alpha_{i}}
-
C^{j}\vartheta_{j}\frac{\alpha_{i}}{\alpha_{j}}
]
\lambda_{i}\partial_{\lambda_{i}}\varepsilon_{i,j} 
-
\sum_{i<j}C^{i}\vartheta_{i}\frac{\alpha_{j}}{\alpha_{i}}
[
\lambda_{i}\partial_{\lambda_{i}}\varepsilon_{i,j}
+
\lambda_{j}\partial_{\lambda_{j}}\varepsilon_{i,j}
]
\end{split}\end{equation*}
and observe, that the latter sum is non positive, whence
\begin{equation*}\begin{split}
-  \sum_{i\neq j} C^{i} \vartheta_{i} \frac{\alpha_{j}}{\alpha_{i}}\lambda_{i}\partial_{\lambda_{i}}\varepsilon_{i,j} 
\geq &
-
\sum_{i>j}
[
C^{i}\frac{\alpha_{j}}{\alpha_{i}}
-
C^{j}\frac{\vartheta_{j}}{\vartheta_{i}}\frac{\alpha_{i}}{\alpha_{j}}
]
(-\vartheta_{i}\lambda_{i}\partial_{\lambda_{i}}\varepsilon_{i,j}).
\end{split}\end{equation*}
Hence the statement follows for $C\gg 1$ sufficiently large, provided we may uniformly bound
$\vartheta_{j}\lesssim \vartheta_{i}$ for $i>j$, which recalling \eqref{vartheta} translates into 
\begin{equation}\label{vartheta_monotonicity}
\exists\; \kappa\geq 1\;
\forall\;r<s\;:\;\vartheta(r)\leq \kappa \vartheta(s)
\; \text{ for }\; 
\vartheta(t)=\eta(t)+\eta^{\prime}(t)t\ln t,
\end{equation}
i.e. monotonicity in case $\kappa=1$. Recalling furthermore 
\begin{equation*}
\eta\lfloor_{(0,1)}=0,\; \eta\lfloor_{(2,\infty)}=1\; \text{ and }\; \eta^{\prime}\lfloor_{(1,2)}>0, 
\end{equation*}
evidently \eqref{vartheta_monotonicity} is satisfied, whenever $s>1+\delta$ for some $\delta>0$ small, while 
we may assume $\eta^{\prime \prime}\geq 0$ on $(0,1+\delta)$.  Hence $\vartheta$ as a sum of products of non negative monotone functions on $(0,1+\delta)$ is monotone. 
\end{proof}

Together Lemmata \ref{lem_no_mixed_type},\ref{lem_no_concentration_away_from_x_0} and \ref{lem_no_tower_bubbling_at_x_0} show, that a non compact flow line $u$ has to satisfy
$$
 u=\alpha \delta_{a,\lambda}+v \in V(1,\varepsilon)\; \text{ eventually}
 $$

 and 
 $
a\longrightarrow x_{0}=\max_{M}K. 
$

\subsection{Diverging flow lines} \label{sec_diverging_flow_lines}
The only possibility left for a non compact flow line of \eqref{Yamabe_flow} or \eqref{gradient_flow} under Condition \ref{Condition_on_K} is realised. 
\begin{lemma}\label{lem_diverging}
Let $n=5$ and Condition \ref{Condition_on_K} hold true. 
Then for every $\varepsilon>0$ small there exists 
$0<\varepsilon_{0}<\varepsilon$ such, that every
flow line $u$
under \eqref{Yamabe_flow}  or \eqref{gradient_flow} and starting with initial data
\begin{equation*}
u_{0}=\alpha_{0}\varphi_{a_{0}, \lambda_{0}}\in V(1, \varepsilon_{0})
\; \text{ with } \; 
\vert a_{0}\vert <\varepsilon_{0}
\; \text{ and }\; 
\lambda_{0}\vert a_{0}\vert^{2} >\varepsilon^{-1}_{0}
\end{equation*}
remains in $V(1, \varepsilon)$ for all times and 
\begin{align*}
\lambda\longrightarrow  \infty\;\text{ and }\;\vert  a \vert\longrightarrow  0\;\text{ as }\;t\longrightarrow  \infty.
\end{align*} 
\end{lemma}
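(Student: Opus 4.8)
\emph{The plan.} I would run a trapping argument built around the single quantity $\lambda|a|^{2}$, whose initial value $\lambda_{0}|a_{0}|^{2}>\varepsilon_{0}^{-1}$ is at our disposal to make arbitrarily large. Since $u_{0}=\alpha_{0}\varphi_{a_{0},\lambda_{0}}\in V(1,\varepsilon_{0})\subset V(1,\varepsilon)$ and $V(1,\varepsilon)$ is open, the flow starts in $V(1,\varepsilon)$, and as long as it stays there Proposition \ref{prop_optimal_choice} gives $u=\alpha\varphi_{a,\lambda}+v$ and, with $p=1$ killing all interaction terms, Proposition \ref{prop_the_shadow_flow} applies. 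Inserting the Condition \ref{Condition_on_K} expansions $\nabla K(a)=-4|a|^{2}a$, $\Delta K(a)=-28|a|^{2}$, $\nabla\Delta K(a)=-56a$, $K(a)=1-|a|^{4}$ and $H(a)\to H(x_{0})>0$ (positive mass, $M\not\simeq\mathbb{S}^{5}$), exactly as in the proof of Lemma \ref{lem_no_tower_bubbling_at_x_0} one obtains
\begin{equation*}
\frac{\dot\lambda}{\lambda}=\frac{r}{k}\Big[28\gamma_{2}\frac{|a|^{2}}{\lambda^{2}}-\gamma_{1}\frac{H(a)}{\lambda^{3}}\Big]\big(1+o(1)\big),\qquad
\lambda\dot a=-4\gamma_{3}\,\frac{r}{k}\,\frac{|a|^{2}a}{\lambda}\big(1+o(1)\big)
\end{equation*}
up to $o_{\varepsilon}(\lambda^{-3})$ and $O(|\partial J(u)|^{2})$, resp.\ $O(|\delta J(u)|^{2})$ under \eqref{Yamabe_flow}. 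Using $\gamma_{3}/\gamma_{2}=3$ (cf.\ the proof of Proposition \ref{I-prop_n=5} in \cite{may-cv}) this gives
\begin{equation*}
\partial_{t}\ln(\lambda|a|^{2})=\frac{r}{k}\Big[(28\gamma_{2}-8\gamma_{3})\frac{|a|^{2}}{\lambda^{2}}-\gamma_{1}\frac{H(a)}{\lambda^{3}}\Big]\big(1+o(1)\big)+(\mathrm{error}),
\end{equation*}
where $28\gamma_{2}-8\gamma_{3}=4\gamma_{2}>0$, whereas $\partial_{t}\big(\ln\lambda+\tfrac76\ln|a|^{2}\big)$ has \emph{vanishing} leading coefficient $28\gamma_{2}-\tfrac76\cdot8\gamma_{3}=0$, so that $\lambda|a|^{7/3}$ is non-increasing up to the $-\gamma_{1}H/\lambda^{3}$ term and the error.

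Next I would fix a large constant $M$ (say $M>\gamma_{1}\sup_{|a|\le\varepsilon}H(a)/\gamma_{2}$) and set $R_{M}=\{u=\alpha\varphi_{a,\lambda}+v:\ |a|<\varepsilon,\ \lambda|a|^{2}\ge M\}$; shrinking $\varepsilon$ one checks $R_{M}\subset V(1,\varepsilon)$, since on $R_{M}$ one has $\lambda>M\varepsilon^{-2}$ and $|\partial J(u)|\lesssim|a|^{3}/\lambda$ (Propositions \ref{prop_simplifying_ski}, \ref{prop_a-priori_estimate_on_v}, as there $|\nabla K|/\lambda\simeq|a|^{3}/\lambda$ dominates $|\Delta K|/\lambda^{2}$ and $\lambda^{2-n}$), hence $\|v\|$ and the $\alpha$-defect are $O(|a|^{3}/\lambda+\lambda^{-3})$, all small. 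Let $T^{*}=\sup\{T\ge0:\ u(t)\in R_{M}\ \forall t\in[0,T]\}>0$ and suppose $T^{*}<\infty$. On $[0,T^{*})$ the formulas above hold; the geometric errors $o_{\varepsilon}(\lambda^{-3})$ and $O(|\partial J|^{2})=O(|a|^{6}/\lambda^{2})$ are pointwise $\ll|a|^{2}/\lambda^{2}$ there and are absorbed, while under \eqref{Yamabe_flow} the term $O(|\delta J|^{2})$ is \emph{not} pointwise dominated by $|a|^{2}/\lambda^{2}$ but obeys $\int_{0}^{\infty}|\delta J(u)|^{2}\lesssim J(u_{0})-c(K)\le J_{\infty}-c(K)+1=:C_{1}$ for $\varepsilon_{0}$ small, a bound independent of $\varepsilon_{0}$. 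Thus $\partial_{t}\ln(\lambda|a|^{2})\ge c_{*}|a|^{2}/\lambda^{2}-C|\delta J|^{2}$ and, integrating,
\begin{equation*}
\ln\big(\lambda(t)|a(t)|^{2}\big)\ \ge\ \ln\big(\lambda_{0}|a_{0}|^{2}\big)-CC_{1}\ \ge\ \ln\varepsilon_{0}^{-1}-CC_{1}\qquad(0\le t<T^{*}),
\end{equation*}
which is $>\ln M$ once $\varepsilon_{0}$ is chosen small; likewise $\ln\lambda(t)\ge\ln\lambda_{0}-CC_{1}$ keeps $\lambda$ enormous and $\lambda|a|^{7/3}\le\lambda_{0}|a_{0}|^{7/3}e^{CC_{1}}$ keeps $|a(t)|\le C'|a_{0}|<\varepsilon$. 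Hence $u(T^{*})$ lies strictly inside $R_{M}$, contradicting maximality by openness; so $T^{*}=\infty$ and $u$ remains in $V(1,\varepsilon)$ for all time. Finally, if $\lambda$ stayed bounded, then $\lambda|a|^{2}\ge M$ forces $|a|$ bounded below, whence $\partial_{t}\ln\lambda\ge c|a|^{2}/\lambda^{2}\ge c_{0}>0$ up to the integrable error, so $\ln\lambda(t)\ge\ln\lambda_{0}+c_{0}t-CC_{1}\to\infty$, absurd; thus $\lambda\to\infty$, and then $\lambda|a|^{7/3}$ bounded forces $|a|\to0$.

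\emph{Main obstacle.} The two sign identities ($28\gamma_{2}-8\gamma_{3}=4\gamma_{2}>0$ and $28\gamma_{2}-\tfrac76\cdot8\gamma_{3}=0$, both from $\gamma_{3}=3\gamma_{2}$) and the inclusion $R_{M}\subset V(1,\varepsilon)$ are routine. The genuine difficulty is the shadow-flow error under the $L^{2}$-flow \eqref{Yamabe_flow}: on $R_{M}$ the principal term $|a|^{2}/\lambda^{2}\to0$, and $O(|\delta J|^{2})$ sits a priori at the coarser scale of an $L^{2}$- rather than $L^{\frac{2n}{n+2}}$-norm of $R-r\bar K$, so it cannot be absorbed pointwise; the argument works only because $\int_{0}^{\infty}|\delta J|^{2}$ is a \emph{fixed} energy budget $\le J_{\infty}-c(K)+1$, independent of $\varepsilon_{0}$, which is swamped by the reservoir $\ln(\lambda_{0}|a_{0}|^{2})>\ln\varepsilon_{0}^{-1}$ designed into the initial data. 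For \eqref{gradient_flow} the error is the harmless $O(|\partial J|^{2})$ and this subtlety evaporates.
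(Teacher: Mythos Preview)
Your trapping argument via $\lambda|a|^{2}$ (together with the companion $\lambda|a|^{7/3}$) mirrors the paper's strategy, which computes $(\lambda\Delta K(a))'$---equivalently $\partial_{t}\ln(\lambda|a|^{2})$ since $\Delta K(a)=-28|a|^{2}$---and uses the same identity $\gamma_{3}=3\gamma_{2}$ to obtain the decisive sign. The shadow-flow computations and the handling of the $|\delta J|^{2}$ error by the fixed energy budget $\int_{0}^{\infty}|\delta J|^{2}\leq C_{1}$ are also as in the paper.

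There is, however, a genuine gap in the step ``$R_{M}\subset V(1,\varepsilon)$''. You invoke $|\partial J(u)|\lesssim |a|^{3}/\lambda$ citing Propositions \ref{prop_simplifying_ski} and \ref{prop_a-priori_estimate_on_v}, but those results go the other way: Proposition \ref{prop_a-priori_estimate_on_v} bounds $\|v\|$ \emph{in terms of} $|\partial J(u)|$ plus principal quantities, and the principal lower bound on $\partial J$ is a \emph{lower} bound, not an upper one. At finite time along the flow there is no a priori reason for $|\partial J(u)|$ to be pointwise small, so the estimate is circular and you have not shown $\|v\|<\varepsilon$, i.e.\ that the flow remains in $V(1,\varepsilon)$. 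This is the heart of the trapping, since the shadow-flow identities you use are only available while $u\in V(1,\varepsilon)$; case (c) of exit---namely $u(T^{*})\in\partial V(1,\varepsilon)$ via $\|v\|$ or the $\alpha$-defect---is not excluded.

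The paper closes this by an energy expansion, exploiting that $u_{0}=\alpha_{0}\varphi_{a_{0},\lambda_{0}}$ is a pure bubble: one shows
\[
J(u)\ \geq\ 4n(n-1)c_{1}^{2/n}+c\|v\|^{2}+o_{\frac{1}{\lambda}+|a|}(1)
\]
on $V(1,\varepsilon)$ (via $\partial^{2}J(\varphi_{a,\lambda})\lfloor_{H}>\gamma$ and the orthogonalities), while $J(u)\leq J(u_{0})=4n(n-1)c_{1}^{2/n}+o_{\frac{1}{\lambda_{0}}+|a_{0}|}(1)$, whence $\|v\|^{2}=o_{\varepsilon_{0}}(1)$ uniformly in time; the $\alpha$-defect then follows. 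This bypasses $|\partial J(u)|$ entirely. With this correction your control of the $(a,\lambda)$-dynamics is sound.

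A minor additional point: your conclusion ``$\lambda$ bounded leads to a contradiction, thus $\lambda\to\infty$'' skips the case $\liminf\lambda<\limsup\lambda=\infty$. From $\partial_{t}\ln\lambda\geq -C|\delta J|^{2}$ on $R_{M}$ (which your estimates give) and $\int_{s}^{\infty}|\delta J|^{2}\to 0$ one gets $\lambda(t)\geq\lambda(s)e^{-o(1)}$ for $t\geq s$, so unboundedness indeed forces $\lambda\to\infty$; the paper instead obtains the quantitative $\lambda^{3}\gtrsim t$ via an integrating-factor argument for $\vartheta=\lambda^{3}$.
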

\begin{proof}
We prove the statement under \eqref{Yamabe_flow}. The proof under \eqref{gradient_flow} is then analogous replacing in particular the appearance of $\vert \delta J\vert$ by $\vert \partial J \vert $. 
In order to prove, that $u$ remains in $V(1, \varepsilon)$ for all times let us define
\begin{equation*}
\begin{split}
T
= 
\sup\{\tau>0\;:\; \;\forall \; 0\leq t <\tau  
\;:\;& 
u\in V(1, \varepsilon), \;
\vert  a \vert< \varepsilon, \;
\lambda\vert  a \vert^{2}> \varepsilon^{-1} 
\}.
\end{split}
\end{equation*} 
We then have to show $T=\infty$. 
We may clearly  assume 
\begin{equation}\label{divergence_integrability}
\int^{\infty}_{0}\vert \delta J(u)\vert^{2}\leq c < \infty .
\end{equation} 
According to Proposition \ref{prop_the_shadow_flow} and using the principal lower bound on $ \partial J$, cf. Definition \ref{def_principally_lower_bounded_of_the_first_variation}, the relevant evolution equations are 
\begin{enumerate}[label=(\roman*)]
 \item \quad 
$
-\frac{\dot{\lambda}}{\lambda}
=
\frac{r}{k}(\gamma_{1}\frac{H(a)}{\lambda^{3}} +\gamma_{2}\frac{\Delta K(a)}{K(a)\lambda^{2}})
(1+o_{\frac{1}{\lambda}}(1))
+
o(\frac{1}{\lambda^{3}})+O(\vert \delta J(u)\vert^{2})
$
 \item \quad  
$
\lambda\dot{a}=\frac{r}{k}
(\gamma_{3}\frac{\nabla K(a)}{K(a)\lambda}  +\gamma_{4}\frac{\nabla \Delta K(a)}{K(a)\lambda^{3}}) (1+o_{\frac{1}{\lambda}}(1))
+
o(\frac{1}{\lambda^{3}})+O(\vert \delta J(u)\vert^{2}),
$ 
 \end{enumerate}
where due to $k=1$ and hence $\frac{r}{k}=J(u)$ we have for some constant $\kappa >0$
during $(0,T)$ 
\begin{equation*}
\frac{r}{k}=\kappa(1+o_{\varepsilon}(1)).
\end{equation*}
Moreover
\begin{equation}\begin{split}\label{derivatives_of_K}
\nabla K(a)=-4\vert  a \vert^{2}a,
\,
\Delta K(a)=-4\cdot 7\vert  a \vert^{2}\; \text{ and }\;\nabla\Delta K(a)=-8\cdot 7 a.
\end{split}\end{equation}
We obtain during $(0,T)$ the simplified evolution equations

\begin{enumerate}[label=(\roman*)]
 \Item \quad 
 \begin{fleqn}[\parindent] \vspace{-2pt}
 \begin{equation}\begin{split}\label{lambda_dot_simplified}
 \textstyle \quad 
-
\frac{\dot{\lambda}}{\lambda}=\kappa\gamma_{2}\frac{\Delta K(a)}{\lambda^{2}}(1+o_{\varepsilon}(1))
+ 
O(\vert \delta J(u)\vert^{2})
\end{split}\end{equation}
\end{fleqn}  
\item \quad 
$
\lambda\dot{a}=\kappa \gamma_{3}\frac{\nabla K(a)}{\lambda}(1+o_{\varepsilon}(1))+O(\vert \delta J(u)\vert^{2}).
$
\end{enumerate}
First note, that during $(0,T)$
\begin{equation}\label{Norm_a_divergence} 
\begin{split}
\partial_{t}\vert  a \vert^{2} 
= & 
\frac{2}{\lambda}\langle a, \lambda\dot{a}\rangle 
=   
2\kappa \gamma_{3}\frac{\langle\nabla K(a),a\rangle}{\lambda^{2}}(1+o_{\varepsilon}(1))
+
O(\frac{\vert  a \vert\vert \delta J(u)\vert^{2}}{\lambda}),
\end{split}
\end{equation} 
whence
$
\partial_{t}\ln \vert  a \vert^{2}\leq O(\frac{\vert \delta J(u)\vert^{2}}{\lambda\vert  a \vert}).
$
But during $(0,T)$ by definition
$$\lambda \vert  a \vert =\lambda^{\frac{1}{2}}(\lambda \vert  a \vert^{2})^{\frac{1}{2}}>c\varepsilon^{-1},$$
whence  $\vert  a \vert $ remains uniformly small, e.g.$\vert  a \vert\leq C\varepsilon_{0}$. 
Secondly
\begin{equation*}
\begin{split}
(\lambda\Delta K(a))' 
= & 
\frac{\dot{\lambda}}{\lambda}\lambda\Delta K(a)+\langle\nabla\Delta K(a), \lambda\dot{a}\rangle\\
= & 
-\kappa
\gamma_{2}\frac{\vert\Delta K(a)\vert^{2}}{\lambda}(1+o_{\varepsilon}(1))
 +
\kappa\gamma_{3}\frac{\langle\nabla\Delta K(a), \nabla K(a)\rangle}{\lambda}(1+o_{\varepsilon}(1))\\
  & +O((\vert\lambda\Delta K(a)\vert+\vert\nabla\Delta K(a)\vert)\vert \delta J(u)\vert^{2}),
\end{split}
\end{equation*} 
and hence, since $\vert \lambda \Delta K(a)\vert=4\cdot 7 \lambda \vert  a \vert^{2}\geq c\varepsilon^{-1}$ 
and 
$\vert \nabla \Delta K(a)\vert \leq C\varepsilon$
during $(0,T)$, 
\begin{equation*}
\begin{split}
\frac{(\lambda\Delta K(a))' }{\kappa} 
= & 
(
-
4^{2}\cdot 7^{2}\gamma_{2}\vert  a \vert ^{4}
+
4\cdot 8 \cdot 7\gamma_{3}\vert  a \vert^{4}
)
\frac{1+o_{\varepsilon}(1)}{\lambda}
\end{split}
\end{equation*} 
up to some 
$$O(\vert\lambda\Delta K(a)\vert\vert \delta J(u)\vert^{2}).$$
Due to  $\frac{\gamma_{3}}{\gamma_{2}}=3$, cf.  the proof of Proposition  \ref{I-prop_n=5} in \cite{may-cv}, this shows
$$
(\lambda\Delta K(a))' 
\leq 
O(\vert\lambda\Delta K(a)\vert\vert \delta J(u)\vert^{2})
$$
and therefore
$
\partial_{t}\ln(-\lambda\Delta K(a))\geq O(\vert \delta J(u)\vert^{2}).
$
We conclude using \eqref{divergence_integrability}, that 
\begin{equation*}\begin{split}    
4\cdot 7 \lambda \vert  a \vert^{2} 
=
-\lambda\Delta K(a)
\geq 
4\cdot 7\lambda_{0}\vert a_{0}\vert^{2}e^{-C\int_{0}^{\infty}\vert \delta J(u)\vert^{2}}
\end{split}\end{equation*}
remains during $(0,T)$ uniformly large, say $\lambda \vert  a \vert^{2}\geq c\varepsilon_{0}^{-1}$.
As a consequence
\begin{equation*}
\begin{split}
-\frac{\dot{\lambda}}{\lambda} 
= & 
\kappa \gamma_{2}\frac{\Delta K(a)}{\lambda^{2}}(1+o_{\varepsilon}(1))
= 
-4\cdot 7 \kappa \gamma_{2}\frac{\vert  a \vert^{2}}{\lambda^{2}}(1+o_{\varepsilon}(1))
\leq 
-\frac{4\cdot 7 \kappa \gamma_{2}c}{\varepsilon_{0}\lambda^{3}}
\end{split}
\end{equation*} 
up to some $O(\vert \delta J(u)\vert^{2})$, 
whence
$$
\partial_{t}\lambda^{3}+\lambda^{3}O(\vert \delta J(u)\vert^{2})
\geq
\frac{4\cdot 7 \kappa \gamma_{2}c}{3\varepsilon_{0}}
=
C_{0}.
$$
Letting $\vartheta=\lambda^{3}$ this becomes 
$
\dot{\vartheta}+\vartheta O(\vert \delta J(u)\vert^{2})
\geq
C_{0}
.
$
Thus 
there holds
\begin{equation*}
\begin{split}
\dot{\tau}(t) 
= & (\dot{\vartheta}+\vartheta O(\vert \delta J(u)\vert^{2}))(t)
e^{\int_{0}^{t}O(\vert \delta J(u)\vert^{2})}
\geq 
C_{0}
e^{\int_{0}^{t}O(\vert \delta J(u)\vert^{2})}
\end{split}
\end{equation*} 
for $\tau(t)=\vartheta(t)e^{\int_{0}^{t}O(\vert \delta J(u)\vert^{2})}$
and therefore
\begin{equation}\label{tau_evolution_divergence}
\begin{split}
\dot{\tau}(t) 
\geq C_{0}e^{-C\int_{0}^{\infty}\vert \delta J(u)\vert^{2}}
\end{split}
\end{equation}
whence
\begin{equation}\begin{split}\label{theta_estimate_divergence}
\vartheta(0)=\tau(0)\leq\tau(t)=\vartheta(t)e^{\int_{0}^{t}O(\vert \delta J(u)\vert^{2})}\leq
\vartheta(t)e^{C\int^{\infty}_{0}\vert \partial J(u)\vert^{2}},
\end{split}\end{equation}
so $\vartheta$ and thus $\lambda$ remain uniformly large, say $\lambda \geq c\varepsilon_{0}^{-1}$.
In summa we cannot escape from
\begin{equation}\label{in_summa}
\begin{split}
\vert  a \vert <C\varepsilon_{0},\;  \lambda \vert  a \vert^{2}>c\varepsilon_{0}^{-1}
\; \text{ and }\; \lambda >c\varepsilon_{0}^{-1}
\end{split}
\end{equation} 
during $(0,T)$. 
Therefore $T=\infty$ follows, if and as we shall prove
\begin{equation*}
u\in V(1, \frac{\varepsilon}{2})\; \text{ during }\; (0,T).
\end{equation*} 
By definition \ref{def_V(omega,p,e)} and the remarks thereafter this is equivalent to showing
\begin{equation*}
\vert 1-\frac{r\alpha^{\frac{4}{n-2}} K(a)}{4n(n-1)k}\vert, \Vert u-\alpha \varphi_{a, \lambda}\Vert = \Vert v \Vert<\frac{\varepsilon}{2}.
\end{equation*} 
To that end let us expand using $k= 1$
\begin{equation*}
\begin{split}
J(u)
= & 
r
=
\int L_{g_{0}}uu
=
\int L_{g_{0}}(\alpha \varphi_{a, \lambda}+v)(\alpha \varphi_{a, \lambda}+v) \\
= &
\alpha^{2}\int L_{g_{0}}\varphi_{a, \lambda}\varphi_{a, \lambda} 
+
2\alpha \int L_{g_{0}}\varphi_{a, \lambda}v
+
\int L_{g_{0}}vv.
\end{split}
\end{equation*} 
Since 
$
L_{g_{0}}\varphi_{a, \lambda}
=
4n(n-1)\varphi_{a,\lambda}^{\frac{n+2}{n-2}}+o_{\frac{1}{\lambda}}(1),
$
we find by simple expansions
\begin{equation}\label{divergence_v_interaction}
\begin{split}
&
\frac{\int L_{g_{0}}\varphi_{a, \lambda}v }{4n(n-1)}
= 
\int \varphi_{a, \lambda}^{\frac{n+2}{n-2}}v
= 
\int K \varphi_{a, \lambda}^{\frac{n+2}{n-2}}v 
= 
\alpha^{-\frac{4}{n-2}}\int K (u-v)^{\frac{4}{n-2}}\varphi_{a, \lambda}v \\
& \quad =  
-\frac{4}{n-2}\alpha^{-\frac{4}{n-2}}\int Ku^{\frac{6-n}{n-2}}\varphi_{a, \lambda} v^{2}
= 
-\frac{4}{n-2}\alpha^{-1}\int K\varphi_{a, \lambda}^{\frac{4}{n-2}} v^{2}
\end{split}
\end{equation} 
 up to some  
$o_{\frac{1}{\lambda}+\vert  a \vert}(1)+o(\Vert v \Vert^{2})$, where we made use of the orthogonality
$$\int Ku^{\frac{4}{n-2}}\varphi_{a,\lambda}v=0$$
considered under \eqref{Yamabe_flow}.  Hence and still up to some $o_{\frac{1}{\lambda}+\vert  a \vert}(1)+o(\Vert v \Vert^{2})$ 
\begin{equation*}
\begin{split}
J(u)
= & 
4n(n-1)c_{1}\alpha^{2} 
+
\int L_{g_{0}}vv
-
\frac{32n(n-1)}{n-2}\int \varphi_{a, \lambda}^{\frac{4}{n-2}}v^{2},
\end{split}
\end{equation*}   
cf. Lemma \ref{lem_interactions}. 
On the other hand we have up to some $o(\Vert v \Vert^{2})$
\begin{equation*}
\begin{split}
1
= &
\int Ku^{\frac{2n}{n-2}}
=
\int K(\alpha \varphi_{a, \lambda}+v)^{\frac{2n}{n-2}}\\
= &
\alpha^{\frac{2n}{n-2}}\int K\varphi_{a, \lambda}^{\frac{2n}{n-2}}
+
\frac{2n}{n-2}\alpha^{\frac{n+2}{n-2}}\int K\varphi_{a, \lambda}^{\frac{n+2}{n-2}}v 
+
\frac{n}{n-2}\frac{n+2}{n-2}\alpha^{\frac{4}{n-2}}\int K\varphi_{a, \lambda}^{\frac{4}{n-2}}v^{2}.
\end{split}  
\end{equation*}
Considering the second summand above we obtain
using  \eqref{divergence_v_interaction} 
\begin{equation*}
\begin{split}
1
= &
\alpha^{\frac{2n}{n-2}}c_{1}
+
\frac{n(n-6)}{(n-2)^{2}}
\alpha^{\frac{4}{n-2}}\int \varphi_{a, \lambda}^{\frac{4}{n-2}}v^{2}
+
o_{\frac{1}{\lambda}+\vert  a \vert}(1)+o(\Vert v \Vert^{2}),
\end{split}
\end{equation*}
whence
\begin{equation*}
\alpha
=
c_{1}^{-\frac{n-2}{2n}}
+
\frac{6-n}{2(n-2)}c_{1}^{-\frac{n+2}{2n}}\int \varphi_{a, \lambda}^{\frac{4}{n-2}}v^{2}
+
o_{\frac{1}{\lambda}+\vert  a \vert}(1)+o(\Vert v \Vert^{2})
\end{equation*} 
and therefore
\begin{equation*}
c_{1}\alpha^{2}
=
c_{1}^{\frac{2}{n}}
+
\frac{6-n}{n-2}\int \varphi_{a, \lambda}^{\frac{4}{n-2}}v^{2}
+
o_{\frac{1}{\lambda}+\vert  a \vert}(1)+o(\Vert v \Vert^{2}).
\end{equation*} 
Consequently and up to some $o_{\frac{1}{\lambda}+\vert  a \vert}(1)+o(\Vert v \Vert^{2})$
\begin{equation*}
\begin{split}
J(u)
= &
4n(n-1)c_{1}^{\frac{2}{n}} 
+
\int L_{g_{0}}vv
-
4n(n-1)\frac{n+2}{n-2}
\int \varphi_{a, \lambda}^{\frac{4}{n-2}}v^{2} 
\end{split}
\end{equation*} 
and, since the latter quadratic form in $v$ corresponding to $\partial^{2}J(\varphi_{a,\lambda})v^{2}$ is well known to be positive, we obtain with some uniform $c>0$ 
\begin{equation*}
\begin{split}
J(u)  
\geq 
4n(n-1)c_{1}^{\frac{2}{n}} 
+
o_{\frac{1}{\lambda}+\vert  a \vert}(1)+c\Vert v \Vert^{2}
.
\end{split}
\end{equation*} 
But $J(u)\leq J(u_{0})=4n(n-1)c_{1}^{\frac{2}{n}}+o_{\frac{1}{\lambda_{0}}+\vert a_{0}\vert}(1)$
as $u_{0}=\alpha_{0}\varphi_{a_{0},\lambda_{0}}$
and therefore
\begin{equation}\label{v_control_diverging_scenario}
\Vert v \Vert^{2}=o_{\frac{1}{\lambda}+\frac{1}{\lambda_{0}}+\vert  a \vert + \vert a_{0}\vert}(1)
\end{equation}  
remains uniformly small during $(0,T)$, cf. \eqref{in_summa}. 
Finally note, that 
\begin{equation*}  
\begin{split}
\frac{r\alpha^{\frac{4}{n-2}}K(a)}{4n(n-1)k}
=
\frac{K(a)\int L_{g_{0}}\varphi_{a,\lambda}\varphi_{a,\lambda}}{4n(n-1)\int K\varphi_{a,\lambda}^{\frac{2n}{n-2}}}
+
o_{\Vert v \Vert}(1) 
=
1
+
o_{\frac{1}{\lambda}+\Vert v \Vert}(1),
\end{split}
\end{equation*} 
whence  by virtue of  \eqref{v_control_diverging_scenario} 
\begin{equation*}  
\vert 1-\frac{r\alpha^{\frac{4}{n-2}}K(a)}{4n(n-1)k}\vert
=
o_{\frac{1}{\lambda}+\frac{1}{\lambda_{0}}+\vert  a \vert + \vert a_{0}\vert}(1)
\end{equation*}
and therefore remains uniformly small, cf. \eqref{in_summa}. 
This completes the proof of $T=\infty$. Then  by \eqref{tau_evolution_divergence}
$
\tau\geq ct,      
$
 whence    $\vartheta=\lambda^{3}\geq \tilde ct$
according to \eqref{theta_estimate_divergence}.  This shows $\lambda \longrightarrow \infty$.
Finally by \eqref{derivatives_of_K} and \eqref{Norm_a_divergence} 
\begin{equation*}
\begin{split}
\partial_{t} \vert  a \vert^{2} \leq
-c\frac{ \vert  a \vert^{4}}{\lambda^{2}}  
+
O(\frac{ \vert  a \vert\vert \delta J(u)\vert^{2}}{\lambda})
=
c
\vert  a \vert^{2}
(
-
\frac{\vert  a \vert^{2}}{\lambda^{2}}
+
O(\frac{ \vert \delta J(u)\vert^{2}}{\vert  a \vert\lambda})
)
\; \text{ for some }\; c>0. 
\end{split}
\end{equation*}
Since $\lambda \vert  a \vert^{2}$ and therefore $\lambda \vert  a \vert$ as well remain large, cf. \eqref{in_summa}, we obtain
\begin{equation*}
\partial_{t}\ln \vert  a \vert^{2}
\leq 
-c\frac{\vert  a \vert^{2}}{\lambda^{2}}+O( \vert \delta J(u)\vert^{2}),
\end{equation*} 
whence due to \eqref{derivatives_of_K} and \ref{lambda_dot_simplified} for some $\tilde c>0$
\begin{equation*}
\partial_{t}\ln \vert  a \vert^{2}
\leq 
-\tilde c\frac{\dot \lambda}{\lambda}+O( \vert \delta J(u)\vert^{2})
=
- 
\partial_{t}\ln \lambda^{\tilde c}+O(\vert \delta J(u)\vert^{2}).
\end{equation*} 
Therefore $\lambda\longrightarrow  \infty$ implies $\vert  a \vert\longrightarrow  0$.
\end{proof} 

\subsection{Modifying the gradient flow}  
\label{sec_modifying the gradient flow}
We finally  discuss how to compactify \eqref{Yamabe_flow} and \eqref{gradient_flow} in the situation of Lemma \ref{lem_diverging}.   From Section \ref{sec_diverging_flow_lines} the only critical value for a non compact flow line is
\begin{equation*}
J_{\infty}=J(\varphi_{x_{0},\infty})=\frac{c_{0}}{K^{\frac{n-2}{n}}(x_{0})},\; c_{0}>0.
\end{equation*}
Hence it is sufficient to only modify \eqref{Yamabe_flow} and \eqref{gradient_flow} on
\begin{equation*}
\mathcal{M}_{\delta}=\{J_{\infty}-\delta<J<J_{\infty}+\delta\},\; 0<\delta \ll 1.
\end{equation*}
We then pass from \eqref{Yamabe_flow} to \eqref{gradient_flow} on $\mathcal{M}_{\delta}$ and are left with suitably compactifying \eqref{gradient_flow} on $\mathcal{M}_{\delta}$. Clearly we may restrict ourselves to modifications on
\begin{equation*}
\mathcal{N}_{a,\varepsilon}
=
V(1,\varepsilon)\cap \{d(a,x_{0})<\varepsilon\} \subset \mathcal{M}_{\delta}
\end{equation*}
for sufficiently small $0<\varepsilon \ll \delta$.  To that end consider a cut-off function
\begin{equation*}
 \eta_{1}\in C^{\infty}(\R_{+},[0,1])\; \text{ with }\; 
 \eta_{1} \lfloor_{(0,1)}=1,\; \eta_{1}\lfloor_{(2,\infty)}=0\; \text{ and }\; \eta_{1}^{\prime}\leq 0
\end{equation*}
and let for $0<\epsilon \ll \varepsilon$
\begin{equation*}
\eta_{V}=\eta(\frac{d(\cdot,V(1,\frac{\varepsilon}{2}))}{\epsilon}) \; \text{ on }\;  X
\; \text{ and }\; 
\eta_{a}=\eta_{1}(\frac{\vert  a \vert}{\epsilon}) \; \text{ on  }\; V(1,\varepsilon),
\end{equation*}
where $\vert \cdot \vert$ denotes the euclidean distance from $x_{0}$ in conformal normal coordinates around $x_{0}$.  
Moreover consider a second cut-off function 
\begin{equation*}
  \eta_{2}\in C^{\infty}(\R_{+},[0,1])\; \text{ with }\; 
 \eta_{2} \lfloor_{(0,1)}=0,\; \eta_{2}\lfloor_{(2,\infty)}=1\; \text{ and }\; \eta^{\prime}_{2}\geq 0
\end{equation*}
and let
\begin{equation*}
\eta_{a,\lambda}=\eta_{2}(\frac{\lambda \vert  a \vert^{2}}{\epsilon})
\; \text{ on }\; \mathcal{N}_{a,\epsilon}.  
\end{equation*}
Hence $\eta_{V}\eta_{a}\eta_{a,\lambda}$ is well defined on $X$ and 
\begin{equation*}
supp(\eta_{V}\eta_{a}\eta_{a,\lambda})\subset supp(\eta_{V}\eta_{a})\subset \mathcal{N}_{a,\varepsilon}\subset \mathcal{M}_{\delta}. 
\end{equation*}
We then consider for some $C\geq 1$
\begin{equation}\label{W_definition}
W=-\varepsilon\eta_{V}\eta_{a}\eta_{a,\lambda} 
(
\frac{\alpha^{-1}\nabla K(a)}{ \vert \nabla K(a)\vert \lambda}\frac{\nabla_{a}}{\lambda}\varphi_{a,\lambda}
-
C\frac{v}{\lambda} 
)
\end{equation}
as a bounded, locally Lipschitz vectorfield on $X$, which is well defined due to 
\begin{equation*}
\nabla K(a)=-4\vert  a \vert^{2} a \neq 0 \; \text{ on }\; supp(\eta_{a,\lambda}),
\end{equation*}
and study the flow generated by 
\begin{equation}\label{gradient_flow_modified_discussion}
\partial_{t}u=-\frac{r}{2k}(\nabla J(u)+W+\frac{\int Ku^{\frac{n+2}{n-2}}(\nabla J(u)+W)}{k}u).
\end{equation}
Clearly $k=1$ is preserved as is positivity $u>0$ along flow lines
and consequently \eqref{gradient_flow_modified_discussion} induces a flow on $X$. 
Indeed 
\begin{equation*}
-W
\geq 
\varepsilon\eta_{V}\eta_{a}\eta_{a,\lambda}(-c\frac{\varphi_{a,\lambda}}{\lambda}-C\frac{v}{\lambda})
\geq 
-\frac{C\varepsilon}{\lambda}\eta_{V}\eta_{a}\eta_{a,\lambda}u \geq -u
\end{equation*}
for $C\gg 1$ sufficiently large, whence we obtain in combination with \eqref{gradient_positive}
\begin{equation*}
\partial_{t}u\geq -\tilde C(1+\vert \partial J(u)\vert)u
\end{equation*}
and therefore $u$ exists positively for all times, provided we have uniform a priori bounds on $\vert \partial J(u)\vert$, 
which we derive from Proposition \ref{prop_derivatives_of_J} using $k=1$ and the boundedness of energy along a flow line. 
The latter boundedness follows from the subsequent  Lemma \ref{lem_modified_flow_energy_consumption}. 

\begin{lemma}\label{lem_modified_flow_energy_consumption}
Along a flow line there holds 
$
\partial_{t}J(u) \lesssim -\vert \partial J(u)\vert^{2}. 
$
\end{lemma}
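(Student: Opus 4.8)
The plan is to compute $\partial_t J(u)$ directly along the flow \eqref{gradient_flow_modified_discussion} and show that the modification terms involving $W$ contribute a non-positive quantity, so that the decay is governed by the $\nabla J$ part exactly as in the unmodified flow \eqref{gradient_flow}. First I would use scaling invariance $\partial J(u)u = 0$ together with the definition of $W$ to write
\begin{equation*}
\partial_t J(u) = \partial J(u)\,\partial_t u = -\frac{r}{2k}\,\partial J(u)\bigl(\nabla J(u)+W\bigr)
= -\frac{r}{2k}\Bigl(\Vert \nabla J(u)\Vert^2 + \partial J(u) W\Bigr),
\end{equation*}
where the correction term $\frac{\int Ku^{\frac{n+2}{n-2}}(\nabla J(u)+W)}{k}u$ drops out precisely because $\partial J(u) u = 0$. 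Since $\Vert \nabla J(u)\Vert^2 = \vert \partial J(u)\vert^2$ and $\frac{r}{2k}$ is bounded above and below along a flow line by \eqref{energy_bounds} and $k=1$, it remains to control $\partial J(u) W$.

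The key step is to show $\partial J(u)W \gtrsim -\vert \partial J(u)\vert^2$ up to terms that are themselves dominated by $\vert\partial J(u)\vert^2$ on the support of the cut-offs. By the defining relation $\langle \nabla J(u),w\rangle_{L_{g_0}} = \partial J(u)w$, I would rewrite $\partial J(u) W = \langle \nabla J(u), W\rangle_{L_{g_0}}$ and then expand $W$ from \eqref{W_definition}. The term $C\varepsilon \eta_V\eta_a\eta_{a,\lambda}\frac{v}{\lambda}$ pairs with $\nabla J(u)$ to give something of size $O(\frac{1}{\lambda}\Vert\nabla J(u)\Vert\Vert v\Vert)$, which by Proposition \ref{prop_a-priori_estimate_on_v} (with $\Vert v\Vert$ controlled by the principal quantities and $\vert\partial J(u)\vert$) and Young's inequality is absorbed into $\vert\partial J(u)\vert^2$ plus higher-order terms in $\frac{1}{\lambda}$, $\varepsilon_{r,s}$, etc. The essential term is the pairing of $\nabla J(u)$ with $-\varepsilon\eta_V\eta_a\eta_{a,\lambda}\frac{\alpha^{-1}\nabla K(a)}{\vert\nabla K(a)\vert\lambda}\cdot\frac{\nabla_a}{\lambda}\varphi_{a,\lambda}$. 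Using $\nabla J(u) = k^{\frac{2-n}{n}}L_{g_0}^{-}(L_{g_0}u - r\bar K u^{\frac{n+2}{n-2}})$ and the testing identity, $\langle \nabla J(u), \phi_{3,a}\rangle_{L_{g_0}}$ reproduces (a multiple of) $\sigma_{3,1}$ from Proposition \ref{prop_simplifying_ski}, whose leading part on $\mathcal N_{a,\varepsilon}$ (single bubble, $n=5$, near $x_0$) is a positive multiple of $\frac{r\alpha^{\frac{n+2}{n-2}}}{k}\frac{\nabla K(a)}{\lambda}$ up to lower-order terms and $O(\vert\partial J(u)\vert^2)$. Contracting with $-\varepsilon\frac{\alpha^{-1}\nabla K(a)}{\vert\nabla K(a)\vert\lambda}$ yields a term of definite sign (namely $\lesssim -\varepsilon\,c\,\frac{\vert\nabla K(a)\vert}{\lambda^2}$, which is $\leq 0$), plus errors absorbable into $\vert\partial J(u)\vert^2$ via the principal lower bound of Proposition \ref{prop_princ_lower_bounded_under_Cond_n}. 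Hence $\partial J(u)W \leq C\vert\partial J(u)\vert^2$ with a small constant, so that for $\varepsilon$ small $\partial_t J(u)\leq -c\vert\partial J(u)\vert^2$.

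The main obstacle I anticipate is bookkeeping the error terms in the expansion of $\langle\nabla J(u),W\rangle_{L_{g_0}}$: one must verify that every quantity appearing — $\frac{\vert\nabla K_i\vert}{\lambda_i}$, $\frac{\vert\Delta K_i\vert}{\lambda_i^2}$, $\lambda_i^{2-n}$, $\varepsilon_{i,j}$, $\Vert v\Vert$ — is either of the favorable sign (the $\nabla K$ term) or bounded by $\vert\partial J(u)\vert^2$ (via the principal lower bound, squaring, and Young), so that no error escapes the $-\vert\partial J(u)\vert^2$ budget. The point that makes this work is exactly the compatibility, established in Section \ref{sec_shadow_flow}, between the $L_{g_0}$-testing of the gradient and the quantities $\sigma_{k,i}$, together with $\langle \nabla J(u), W\rangle \geq 0$-type control built into the choice of $W$ — indeed $W$ is designed precisely so that its leading contribution to $\partial_t J$ has the right sign while staying bounded and locally Lipschitz on $X$.
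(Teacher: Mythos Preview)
Your architecture is exactly the paper's: use $\partial J(u)u=0$ to reduce to $\partial_t J(u)=-\tfrac{r}{2k}(\vert\partial J(u)\vert^{2}+\partial J(u)W)$, estimate the $v/\lambda$ piece via Proposition \ref{prop_a-priori_estimate_on_v}, and compute the $\phi_{3}$ piece through $\sigma_{3}$. However, there is a sign slip that breaks the argument as written. From Proposition \ref{prop_derivatives_of_J} and the definition of $\sigma_{k,i}$ one has (with $k=1$)
\[
\partial J(u)\phi_{3}=\langle\nabla J(u),\phi_{3}\rangle_{L_{g_{0}}}=-2\sigma_{3},
\]
so the leading part of $\langle\nabla J(u),\phi_{3}\rangle_{L_{g_{0}}}$ is a \emph{negative} multiple of $\tfrac{\nabla K(a)}{\lambda}$, not a positive one. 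Contracting with the prefactor $-\varepsilon\eta\,\tfrac{\alpha^{-1}\nabla K(a)}{\vert\nabla K(a)\vert\lambda}$ in $W$ therefore gives
\[
\partial J(u)W \;=\; +\,c\,\varepsilon\,\eta_{V}\eta_{a}\eta_{a,\lambda}\,\frac{\vert\nabla K(a)\vert}{\lambda^{2}}\;+\;o_{\varepsilon}\!\Bigl(\frac{1}{\lambda^{4}}\Bigr)\;+\;O\!\Bigl(\frac{\vert\partial J(u)\vert^{2}}{\lambda}\Bigr),
\]
i.e.\ the main contribution is \emph{non-negative}. This is the point: $W$ is designed so that $\partial J(u)W\geq 0$ up to small errors, and then $\partial_t J(u)\leq -\tfrac{r}{2k}\vert\partial J(u)\vert^{2}(1+o(1))$ follows directly. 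Your stated conclusion ``$\partial J(u)W\leq C\vert\partial J(u)\vert^{2}$'' is the wrong direction of inequality for what is needed.

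The sign also matters for the error bookkeeping. The residual $o_{\varepsilon}(\lambda^{-4})$ coming from $\sigma_{3}$ is \emph{not} $O(\vert\partial J(u)\vert^{2})$ on $V(1,\varepsilon)$ (the principal lower bound only gives $\vert\partial J(u)\vert^{2}\gtrsim\lambda^{-6}$ in general), so it cannot be absorbed there as you suggest. What the paper does---and what your corrected computation would allow---is to absorb this error into the favourable term $+c\varepsilon\tfrac{\vert\nabla K(a)\vert}{\lambda^{2}}$, using that on $\operatorname{supp}(\eta_{a,\lambda})$ one has $\lambda\vert a\vert^{2}\geq\varepsilon$, whence $\tfrac{\vert\nabla K(a)\vert}{\lambda^{2}}\simeq\tfrac{\vert a\vert^{3}}{\lambda^{2}}\gg\lambda^{-4}$. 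With the sign fixed and this absorption made explicit, your proof coincides with the paper's.
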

\begin{proof}
Since $\partial J(u)u=0$ by scaling invariance, we clearly have
\begin{equation*}
\partial_{t}J(u) 
=
-\frac{r}{2k}
(
\Vert \nabla J(u)\Vert^{2}
+
\partial J(u)W 
).
\end{equation*}
Then Proposition \ref{prop_a-priori_estimate_on_v} and the principal lower bound on $\partial J$ yield
\begin{equation*}
 \partial J(u)\frac{v}{\lambda}
 =
 O(\frac{\vert \partial J(u)\vert^{2}}{\lambda}), 
\end{equation*}
cf. Definition 
\ref{def_principally_lower_bounded_of_the_first_variation}, 
whence 
\begin{equation*}
\partial_{t}J(u) 
=
-\frac{r}{2k}
(
\vert \partial J(u)\vert^{2}(1+o(1))
-
\varepsilon\eta_{V}\eta_{a}\eta_{a,\lambda}\frac{\alpha^{-1}\nabla K(a)}{ \vert \nabla K(a)\vert \lambda}\partial J(u)\frac{\nabla_{a}}{\lambda}\varphi_{a,\lambda} 
).
\end{equation*}
From Proposition \ref{prop_simplifying_ski} and \eqref{small_difference} we then find     
\begin{equation*}
\partial J(u)\frac{\nabla_{a}}{\lambda}\varphi_{a,\lambda}
=
-\sigma_{3,\cdot}
=
-
4n(n-1) e_{3}
\alpha
\frac{\nabla K(a)}{K(a)\lambda} 
+
o_{\varepsilon}
(
\frac{1}{\lambda^{3}}
)
+
O
(
\vert \partial  J(u)\vert^{2}
)  
\end{equation*}
using again Proposition \ref{prop_a-priori_estimate_on_v} and the principal lower bound on $\partial J$. Therefore
\begin{equation*}
\partial_{t}J(u) 
=
-\frac{r}{2k}
(
\vert \partial J(u)\vert^{2}(1+o(1))
+
4n(n-1) e_{3}\varepsilon
\eta_{V}\eta_{a}\eta_{a,\lambda}
(
\frac{\vert \nabla K(a)\vert }{K(a)\lambda^{2}} 
+
o_{\varepsilon}(\frac{1}{\lambda^{4}}) 
)).
\end{equation*}
Note, that on $ supp (\eta_{a,\lambda})$ we have $\lambda \vert  a \vert^{2}\geq \varepsilon$ and hence 
$
\frac{\vert \nabla K(a)\vert}{\lambda^{2}}
 \gg \frac{1}{\lambda^{4}}.
$
\end{proof}
In particular the flow generated by  \eqref{gradient_flow_modified_discussion} decreases energy and we have 
\begin{equation*}
 \int^{\infty}_{0}\vert \partial J(u)\vert^{2}<\infty
\end{equation*}
just like under 
\eqref{gradient_flow}. 
Since \eqref{gradient_flow_modified_discussion} coincides with \eqref{gradient_flow} outside $V(1,\varepsilon)$, whereupon the flow generated by \eqref{gradient_flow} is compact, cf. Section \ref{sec_compact_regions}, every non compact flow line $u$ for \eqref{gradient_flow_modified_discussion} has to enter $V(1,\varepsilon)$ for at least a sequence in time. If we suppose, that $u$ does not remain in $V(1,2\varepsilon)$ eventually, then there exists
\begin{equation*}
s_{1}\leq s_{1}^{\prime}\leq \ldots \leq s_{k} \leq s_{k}^{\prime} \leq \ldots
\; \text{ with }\; s_{k},s_{k}^{\prime}\xrightarrow{k\to \infty} \infty
\end{equation*}
such, that 
\begin{equation*}
u_{s_{k}}\in \partial V(1,\varepsilon),\; u_{s_{k}^{\prime}}\in \partial V(1,2\varepsilon)
\; \text{ and }\; 
u\in V(1,2\varepsilon)\setminus V(1,\varepsilon) \; \text{ during }\; (s_{k},s_{k}^{\prime}).  
\end{equation*}
However, since $\Vert \partial_{t}u\Vert \leq C$ under $\eqref{gradient_flow_modified_discussion}$, as
$$\Vert \nabla J(u)\Vert=\vert \partial J(u)\vert$$ is uniformly bounded along a flow line,
and 
$$d(\partial V(1,2\varepsilon),\partial V(1,\varepsilon))\geq\tilde \varepsilon,$$
we find $\vert s_{k}^{\prime}-s_{k}\vert \geq \frac{\tilde \varepsilon}{C}$. Moreover there holds 
\begin{equation*} 
\vert \partial J \vert \geq \bar \varepsilon \; \text{ on } \; V(1,2\varepsilon)\setminus V(1,\varepsilon)
\end{equation*}
by combining Proposition \ref{prop_a-priori_estimate_on_v} and (i) from Proposition \ref{prop_analysing_ski} with the principal lower bound on $\partial J $, cf. Definition \ref{def_principally_lower_bounded_of_the_first_variation}.  Therefore we infer from Lemma \ref{lem_modified_flow_energy_consumption}
\begin{equation*}
J(u_{s_{k}^{\prime}}) - J(u_{s_{k}})
= 
\int^{s_{k}^{\prime}}_{s_{k}}\partial_{t}J(u) 
\leq 
-
c\int^{s_{k}^{\prime}}_{s_{k}}\vert \partial J(u)\vert^{2} 
\leq 
-\frac{c\bar \varepsilon^{2}\tilde \varepsilon}{C}
\end{equation*}
and hence iteratively
\begin{equation*}
\begin{split}
J(u_{s_{k}^{\prime}})
= &
J(u_{s_{k}^{\prime}})-J(u_{s_{k}})+J(u_{s_{k}})
\leq 
J(u_{s_{k}^{\prime}})-J(u_{s_{k}})+J(u_{s_{k-1}^{\prime}})
\\ 
\leq &
J(u_{s_{1}})+\sum_{i=1}^{k}\left(J(u_{s_{k}^{\prime}})-J(u_{s_{k}})\right),
\end{split}
\end{equation*}
which necessitates $J(u_{s_{k}})\longrightarrow -\infty$, a contradiction.   
Hence we may assume 
$$u\in V(1,2\varepsilon)\; \text{ eventually}.$$ 
On the other hand, since by Lemma \ref{lem_modified_flow_energy_consumption} every flow line up to a sequence in time is a Palais-Smale, cf. \eqref{flow_palais_smale_gradient},  we may assume, that $u$ is precompact in some $V(\omega,p,\delta)$ for  every $\delta>0$.  Since 
\begin{equation*}
d(V(\omega,p,\delta),V(1,2\delta))>\tilde \delta
\; \text{ in case }\; \omega \neq 0 \; \text{ or }\; p\neq 1
\end{equation*} 
for all $\delta>0$ sufficiently small, the same energy consumption argument as before would lead to the same contradiction. Hence necessarily 
\begin{equation}\label{concentration_compactness_for_modified_flow}
u=\alpha \varphi_{a,\lambda} + v \in V(1,\delta) \; \text{ for every } \; \delta>0 \; \text{ eventually.}
\end{equation} 
In particular we may assume $\eta_{V}=1$ eventually for a non compact flow line. 


\bigskip

So let us analyse the impact on  the shadow flow,  when   passing from \eqref{gradient_flow} to \eqref{gradient_flow_modified_discussion}, in particular on the evolution equations for $a$ and $\lambda$. Comparing to
Section \ref{sec_shadow_flow} we find  in the present one bubble scenario 
\begin{enumerate}[label=(\roman*)]
 \item \quad 
 $
 \dot \xi_{k}=(\frac{\dot \alpha}{\alpha},-\frac{\dot \lambda}{\lambda},\lambda \dot a)
 \; \text{ and } \; 
 \phi_{l}=(\varphi_{a,\lambda},-\lambda \partial_{\lambda}\varphi_{a,\lambda},\frac{\nabla_{a}}{\lambda}\varphi_{a,\lambda})
 $
 \item \quad 
 $\Xi_{k,l}
=
4n(n-1)\alpha c_{k}\delta_{k,l}
+
 O
 ( 
 \frac{1}{\lambda^{2}}
 +  
 \vert \partial J(u)\vert 
 )$
 \item \quad
 $
 \Xi_{k,l}\dot \xi^{k}
 =
 \langle \partial_{t}u,\phi_{l}\rangle. 
 $
\end{enumerate}
To achieve  the simple form of $\Xi$ in (ii) above, we applied    Proposition \ref{prop_a-priori_estimate_on_v}
and the principal lower bound on $ \partial J $, cf. Definition \ref{def_principally_lower_bounded_of_the_first_variation}, to \eqref{Xi}. Note, that
due to $k=1$, cf. Proposition \ref{prop_derivatives_of_J}, 
\begin{equation*}
\begin{split}
\frac{r}{k}\int Ku^{\frac{n+2}{n-2}}
&
(\nabla J(u)+W)
=
-
\partial J(u)(\nabla J(u)+W)
+
\int L_{g_{0}}u(\nabla J(u)+W)
 \\
=&
-\vert \partial J(u)\vert^{2} + \partial J(u)u
-
\partial J(u)W+\int L_{g_{0}}uW ,
\end{split}
\end{equation*}
where $\partial J(u)u=0$ by scaling invariance, 
$\partial J(u)W=O(\frac{\vert \partial J(u)\vert}{\lambda})$ by \eqref{W_definition} and 
\begin{equation*}
\begin{split}
\int L_{g_{0}}uW
= &
-\varepsilon\eta_{V}\eta_{a}\eta_{a,\lambda} 
(
\frac{\alpha^{-1}\nabla K(a)}{ \vert \nabla K(a)\vert \lambda}\int L_{g_{0}}u\frac{\nabla_{a}}{\lambda}\varphi_{a,\lambda}
-
C\int L_{g_{0}}u\frac{v}{\lambda} 
) \\
= &
O(\frac{1}{\lambda^{2}}+\Vert v \Vert^{2})
\end{split}
\end{equation*}
by orthogonalities $\langle v,\phi_{k}\rangle=0$ and 
$\int L_{g_{0}}\varphi_{a,\lambda}\frac{\nabla_{a}}{\lambda}\varphi_{a,\lambda}=O(\frac{1}{\lambda^{2}})$. 
Hence 
\begin{equation*}
 \int Ku^{\frac{n+2}{n-2}}(\nabla J(u)+W)
 =
 O (\frac{1}{\lambda^{2}}+\vert \partial J(u)\vert^{2})
\end{equation*}
absorbing $\Vert v \Vert^{2}$ by Proposition \ref{prop_a-priori_estimate_on_v} and the principal lower bound on 
$
\partial J.
$
We therefore have for \eqref{gradient_flow_modified_discussion}, cf. Proposition \ref{prop_simplifying_ski},
\begin{equation*}
\begin{split}
\langle \partial_{t}u,\phi_{l}\rangle 
= &
-\frac{r}{2k}
(
\langle 
\nabla J(u)
,
\phi_{l}\rangle_{L_{g_{0}}}
+
\langle  
W
,
\phi_{l}\rangle_{L_{g_{0}}}
+
O (\frac{1}{\lambda^{2}}+\vert \partial J(u)\vert^{2})
\langle 
u 
,
\phi_{l}\rangle_{L_{g_{0}}}
) \\
= &
\frac{r}{k}\sigma_{l}  
+
\varepsilon \eta_{V}\eta_{a}\eta_{a,\lambda}\frac{r}{2k} 
\frac{\alpha^{-1}\nabla K(a)}{ \vert \nabla K(a)\vert \lambda}
\langle \frac{\nabla_{a}}{\lambda}\varphi_{a,\lambda},\phi_{l}\rangle_{L_{g_{0}}} \\
& +
O (\frac{1}{\lambda^{2}}+\vert \partial J(u)\vert^{2})
\frac{r}{k}
\langle 
\varphi_{a,\lambda} 
,
\phi_{l}\rangle_{L_{g_{0}}}
\end{split}
\end{equation*}
and obtain using $\int L_{g_{0}}\phi_{k}\phi_{l}=c_{k}\delta_{k,l}+O(\frac{1}{\lambda^{2}})$
\begin{equation*}
\begin{pmatrix}
\langle \partial_{t}u,\phi_{1}\rangle \\
\langle \partial_{t}u,\phi_{2}\rangle \\
\langle \partial_{t}u,\phi_{3}\rangle 
\end{pmatrix}
=
\frac{r}{k}
\begin{pmatrix}
\sigma_{1} +O(\frac{1}{\lambda^{2}})\\
\sigma_{2} \\
\sigma_{3} 
+
\frac{\varepsilon}{2} c_{3}\eta_{V}\eta_{a}\eta_{a,\lambda}\frac{\alpha^{-1}}{\lambda}\frac{\nabla K(a)}{\vert \nabla K(a)\vert}
\end{pmatrix}
+
o_{\varepsilon}(\frac{1}{\lambda^{3}})
+
O(\vert \partial J(u)\vert^{2})
\end{equation*}
and hence by matrix inversion
\begin{equation*}
\dot \xi_{k}
=
\Xi^{l}_{k}\langle \partial_{t}u,\phi_{l}\rangle
=
\frac{(1+O(\frac{1}{\lambda^{2}}+\vert \partial J(u)\vert )r}{4n(n-1)\alpha k}
\begin{pmatrix}
\frac{\sigma_{1}}{c_{1}} +O(\frac{1}{\lambda^{2}})\\
\frac{\sigma_{2}}{c_{2}} \\
\frac{\sigma_{3} }{c_{3}}
+
\frac{\varepsilon}{2} \eta_{V}\eta_{a}\eta_{a,\lambda}\frac{\alpha^{-1}}{\lambda}\frac{\nabla K(a)}{\vert \nabla K(a)\vert}
\end{pmatrix} 
\end{equation*}
up to some 
$
o_{\varepsilon}(\frac{1}{\lambda^{3}})
+
O(\vert \partial J(u)\vert^{2}).
$
Recalling $\sigma_{k}=O(\vert \partial J(u)\vert)$ we may simplify to
\begin{equation*}
\dot \xi_{k}
=
\frac{(1+o_{\frac{1}{\lambda}}(1))r}{4n(n-1)\alpha k}
\begin{pmatrix}
\frac{\sigma_{1}}{c_{1}} \\
\frac{\sigma_{2}}{c_{2}} \\
\frac{\sigma_{3} }{c_{3}}
+
\frac{\varepsilon}{2} (1+o_{\varepsilon}(1))\eta_{V}\eta_{a}\eta_{a,\lambda}\frac{\alpha^{-1}}{\lambda}\frac{\nabla K(a)}{\vert \nabla K(a)\vert}
\end{pmatrix} 
+
o_{\varepsilon}(\frac{1}{\lambda^{3}})
+
O(\vert \partial J(u)\vert^{2}).
\end{equation*}
From Proposition \ref{prop_simplifying_ski} we thus obtain using \eqref{small_difference}, Proposition \ref{prop_a-priori_estimate_on_v} and the principal lower bound on $ \partial J $

\begin{lemma}\label{lem_modified_shadow_flow}
Along \eqref{gradient_flow_modified_discussion} there holds on $V(1,\varepsilon)$
\begin{enumerate}[label=(\roman*)]
\item \quad
$
-\frac{\dot \lambda}{\lambda}
=
\frac{r}{ k}
(
\frac{d_{2}}{c_{2}}\frac{ H(a)}{\lambda^{n-2}}
+
\frac{e_{2}}{c_{2}}\frac{\lap K(a)}{K(a)\lambda^{2}} 
)
(1+o_{\frac{1}{\lambda}}(1))
$
\item \quad 
$
\lambda \dot a
=
\frac{r}{k}
[
 \frac{e_{3}}{c_{3}}\frac{\nabla K(a)}{K(a) \lambda }
+
\frac{e_{4}}{c_{3}}\frac{\nabla \lap K(a)}{K(a)\lambda^{3}}
] 
(1+o_{\frac{1}{\lambda}}(1))
+
\frac{\varepsilon r}{8n(n-1)\alpha^{2} k}
\eta_{V}\eta_{a}\eta_{a,\lambda}\frac{\nabla K(a)}{ \vert \nabla K(a)\vert \lambda}
(1+o_{\varepsilon}(1))
$
\end{enumerate}
up to some $o_{\varepsilon}(\frac{1}{\lambda^{3}})+O(\vert \partial J(u)\vert^{2})$
and for $d(a,x_{0}) \ll 1$ up to the same error
\begin{enumerate}[label=(\roman*)]
\item \quad
$
-\frac{\dot \lambda}{\lambda}
=
\frac{r}{ k}
(
\gamma_{1}\frac{ H(a)}{\lambda^{3}}
-
\gamma_{2}\frac{\vert  a \vert^{2}}{\lambda^{2}} 
)
(1+o_{\frac{1}{\lambda}+\vert  a \vert}(1))
$
\item \quad 
$
\lambda \dot a
=
-
\frac{r}{k}
(
\gamma_{3}\frac{\vert  a \vert^{2}a}{ \lambda } 
(1+o_{\frac{1}{\lambda}+\vert a \vert}(1))
-
\gamma_{4}\varepsilon 
\eta_{V}\eta_{a}\eta_{a,\lambda}\frac{e_{a}}{\lambda}
(1+o_{\varepsilon}(1))
)
$
\end{enumerate} 
with $e_{a}=\frac{a}{\vert  a \vert},\gamma_{1},\ldots,\gamma_{4}>0$. 
\end{lemma}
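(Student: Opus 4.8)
\emph{Proof strategy.} The assertion is a bookkeeping computation resting on the pieces already assembled. The plan is to start from the matrix-inversion identity established in the paragraphs immediately preceding the statement, namely
\[
\dot \xi_{k}
=
\frac{(1+o_{\frac{1}{\lambda}}(1))r}{4n(n-1)\alpha k}
\begin{pmatrix}
\tfrac{\sigma_{1}}{c_{1}} \\
\tfrac{\sigma_{2}}{c_{2}} \\
\tfrac{\sigma_{3}}{c_{3}}
+
\tfrac{\varepsilon}{2}(1+o_{\varepsilon}(1))\eta_{V}\eta_{a}\eta_{a,\lambda}\tfrac{\alpha^{-1}}{\lambda}\tfrac{\nabla K(a)}{\vert\nabla K(a)\vert}
\end{pmatrix}
+
o_{\varepsilon}(1/\lambda^{3})
+
O(\vert\partial J(u)\vert^{2}),
\]
to read off its second and third components as $-\dot\lambda/\lambda$ and $\lambda\dot a$, and to substitute the single-bubble expansions of $\sigma_{2}$ and $\sigma_{3}$.

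Concretely, I would apply Proposition \ref{prop_simplifying_ski} in the regime $\omega=0$, $p=1$, in which all interaction terms $\varepsilon_{i,j}$ disappear, so that $\sigma_{2}=d_{2}\alpha\frac{H(a)}{\lambda^{n-2}}+e_{2}\frac{r\alpha^{(n+2)/(n-2)}}{k}\frac{\Delta K(a)}{\lambda^{2}}$ and $\sigma_{3}=\frac{r\alpha^{(n+2)/(n-2)}}{k}\big[\,e_{3}\frac{\nabla K(a)}{\lambda}+e_{4}\frac{\nabla\Delta K(a)}{\lambda^{3}}\,\big]$, each up to $o_{\varepsilon}(\lambda^{2-n})+O(\Vert v\Vert^{2}+\vert\partial J(u)\vert^{2})$. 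Next I would invoke \eqref{small_difference}, i.e. $K(a)\alpha^{4/(n-2)}=4n(n-1)k/r$ up to the error quoted there, to replace $\frac{r\alpha^{(n+2)/(n-2)}}{k}$ by $\frac{4n(n-1)\alpha}{K(a)}$; after dividing by the $4n(n-1)\alpha c_{k}$ supplied by $\Xi$, cf. \eqref{Xi}, the prefactors collapse to the advertised ratios of the constants $d_{2},e_{2},e_{3},e_{4},c_{2},c_{3}$, and one lands on the first displayed pair of evolution equations -- provided every residual error term is absorbed into $o_{\varepsilon}(1/\lambda^{3})+O(\vert\partial J(u)\vert^{2})$. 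The modification of the flow contributes exactly the extra summand $\frac{\varepsilon r}{8n(n-1)\alpha^{2}k}\eta_{V}\eta_{a}\eta_{a,\lambda}\frac{\nabla K(a)}{\vert\nabla K(a)\vert\lambda}$ already isolated before the statement, and here $\eta_{V}=1$ may be assumed, as $u\in V(1,\delta)$ for every $\delta>0$ once the flow line is non compact.

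The single delicate point is that absorption, and it proceeds exactly as in the derivation of Proposition \ref{prop_the_shadow_flow}: one uses Proposition \ref{prop_a-priori_estimate_on_v} to bound $\Vert v\Vert$, hence $\Vert v\Vert^{2}$, by the principal quantities $\frac{\vert\nabla K(a)\vert}{\lambda},\,\frac{\vert\Delta K(a)\vert}{\lambda^{2}},\,\lambda^{2-n}$ plus $\vert\partial J(u)\vert$, and then the principal lower bound on $\partial J$ from Definition \ref{def_principally_lower_bounded_of_the_first_variation} to dominate the squares of those principal quantities by $\vert\partial J(u)\vert^{2}$; since $n=5$ the surviving pure powers $\lambda^{2-n}=\lambda^{-3}$ and $\lambda^{-2(n-2)}=\lambda^{-6}$ are harmless, matching or beating the leading term $H(a)/\lambda^{3}$. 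No $\omega>0$ case arises, since we work inside $V(1,\varepsilon)=V(0,1,\varepsilon)$.

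Finally, for the localized forms valid when $d(a,x_{0})\ll1$ I would substitute Condition \ref{Condition_on_K}(iv): in conformal normal coordinates around $x_{0}\simeq0$ one has $K(a)=1-\vert a\vert^{4}=1+o_{\vert a\vert}(1)$, $\nabla K(a)=-4\vert a\vert^{2}a$, $\Delta K(a)=-28\vert a\vert^{2}$, $\nabla\Delta K(a)=-56a$, and $\frac{\nabla K(a)}{\vert\nabla K(a)\vert}=-a/\vert a\vert=-e_{a}$. Inserting these, the term $\frac{e_{4}}{c_{3}}\frac{\nabla\Delta K(a)}{K(a)\lambda^{3}}=O(\vert a\vert/\lambda^{3})=o_{\vert a\vert}(1/\lambda^{3})$ is swallowed by the error, while the remaining terms assemble into the claimed form with $\gamma_{1}=d_{2}/c_{2}$, $\gamma_{2}=28\,e_{2}/c_{2}$, $\gamma_{3}=4\,e_{3}/c_{3}$ and $\gamma_{4}>0$ coming from $\frac{\varepsilon}{8n(n-1)\alpha^{2}}$; this is legitimate because $\alpha$, like $r$, stays bounded above and below along any flow line by \eqref{energy_bounds} together with \eqref{small_difference}, so its slow variation is carried by the $o_{\varepsilon}$ and $o_{1/\lambda}$ symbols. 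Thus the main obstacle is purely organizational: verifying that none of the $O(\cdot)$-terms generated by Proposition \ref{prop_simplifying_ski}, by \eqref{small_difference} and by the matrix inversion \eqref{Xi} escapes the bound $o_{\varepsilon}(1/\lambda^{3})+O(\vert\partial J(u)\vert^{2})$, which is precisely where the principal lower bound on $\partial J$ is indispensable.
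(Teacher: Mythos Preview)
Your proposal is correct and follows essentially the same route as the paper: you start from the matrix-inverted expression for $\dot\xi_{k}$ derived in the paragraphs immediately preceding the lemma, insert the single-bubble ($p=1$, $\omega=0$) expansions of $\sigma_{2},\sigma_{3}$ from Proposition~\ref{prop_simplifying_ski}, simplify via \eqref{small_difference}, and absorb all residual errors through Proposition~\ref{prop_a-priori_estimate_on_v} together with the principal lower bound on $\partial J$; the localized form is then obtained by substituting \eqref{derivatives_of_K}, exactly as the paper indicates. The only superfluous remark is your aside that $\eta_{V}=1$ may be assumed---the lemma is stated with $\eta_{V}$ present and holds on all of $V(1,\varepsilon)$, so no such assumption is needed at this stage.
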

Clearly the latter version for $d(a,x_{0}) \ll 1$  follows from  \eqref{derivatives_of_K}. 
Comparing  to Proposition \ref{prop_the_shadow_flow}
we observe, that by passing from 
\eqref{gradient_flow} to \eqref{gradient_flow_modified_discussion} we have simply added the term 
\begin{equation*}
\frac{\varepsilon (1+o_{\varepsilon}(1))r}{8n(n-1)\alpha^{2} k}
\eta_{V}\eta_{a}\eta_{a,\lambda}\frac{\nabla K(a)}{ \vert \nabla K(a)\vert \lambda}
\end{equation*}
to the evolution equation of $a$, hence moving $a$ faster towards $x_{0}$.

\subsection{Excluding diverging flow lines}  
\label{sec_excluding_diverging_flow_lines}
As we had, cf. \eqref{concentration_compactness_for_modified_flow}, the only possibility for a diverging  flow line under \eqref{gradient_flow_modified_discussion} is 
$$u=\alpha \varphi_{a,\lambda}+v \in V(1,\varepsilon) \; \text{ eventually for every }\; \varepsilon>0 $$ 
with corresponding modified shadow flow given by Lemma \ref{lem_modified_shadow_flow}, from which
\begin{equation*}
\begin{split}
(\frac{\ln \frac{1}{\lambda}}{K(a)})^{\prime } 
= &
\frac{\ln \lambda}{K^{2}(a)}\langle\frac{\nabla K(a)}{\lambda},\lambda \dot a\rangle
-
\frac{1}{K(a)}\frac{\dot \lambda}{\lambda} \\
\geq &
c
[
\ln \lambda 
(
\frac{\vert \nabla K(a)\vert^{2}}{ \lambda^{2} } 
+ 
\varepsilon 
\eta_{V}\eta_{a}\eta_{a,\lambda}\frac{\vert \nabla K(a)\vert}{\lambda^{2} }
) 
 +
\frac{ H(a)}{\lambda^{3}}
]
\\
& +
\frac{r}{ k}
\frac{e_{2}}{c_{2}}\frac{\lap K(a)}{K^{2}(a)\lambda^{2}} 
(1+o_{\frac{1}{\lambda}}(1))
+
O(\vert \partial J(u)\vert^{2}),
\end{split}
\end{equation*}
as an easy computation shows. Hence $\lambda \longrightarrow \infty$ necessitates 
$$a \longrightarrow x_{0}=\{\nabla K=0\}\cap \mathcal{M}_{\delta}$$
at least for a sequence in times, while on the other hand 
\begin{equation*}
\begin{split}
\partial_{t}\vert \nabla K(a)\vert^{2}
= &
2\nabla^{2} K(a)\frac{\nabla K(a)}{\lambda}\lambda \dot a \\
= &
\frac{\varepsilon (1+o_{\varepsilon}(1))r}{4n(n-1)c_{3}\alpha^{2} k}
\eta_{V}\eta_{a}\eta_{a,\lambda}\frac{\nabla^{2} K(a) \nabla K(a)\nabla K(a)}{ \vert \nabla K(a)\vert \lambda^{2}} \\
& +
 O(\frac{1}{\lambda^{6}}+\frac{\vert \nabla K(a)\vert^{2}}{\lambda^{2}}+\vert \partial J(u)\vert^{2})
 \leq 
 O(\vert \partial J(u)\vert^{2})
\end{split}
\end{equation*}
due to the principal lower bound on $\partial J$, cf. Definition \ref{def_principally_lower_bounded_of_the_first_variation}, and 
$$\nabla^{2}K\leq 0 \; \text{ close to }\; x_{0}=\{K=\max K\},$$ 
i.e. on $supp (\eta_{a})$.  
Therefore and by $\int^{\infty}_{0}\vert \partial J(u)\vert^{2}<\infty$ we find, that necessarily
\begin{equation*}
\lambda \longrightarrow \infty
\Longrightarrow 
a \longrightarrow x_{0}. 
\end{equation*}
In particular we may assume $\eta_{V}=\eta_{a}=1$ from now on. Then on
\begin{equation*}
\{\eta_{a,\lambda}=1\}=\{\lambda \vert a \vert^{2}\geq 2\varepsilon\} 
\end{equation*}
we find from Lemma \ref{lem_modified_shadow_flow} in its refined version for $a$ close to $x_{0}$
\begin{equation*}  
\begin{split}
(\lambda \vert a &\vert^{2})^{\prime}
= 
\lambda \vert a \vert^{2}\frac{\dot \lambda}{\lambda}
+
2\langle a,\lambda \dot a \rangle \\
\leq  &
-
\lambda \vert a \vert^{2} 
(
\tilde \gamma_{1}\frac{ H(a)}{\lambda^{3}}
-
\tilde \gamma_{2}\frac{\vert  a \vert^{2}}{\lambda^{2}} 
+
O(\vert \partial J(u)\vert^{2}) 
)
-
(
\tilde \gamma_{4}\varepsilon \frac{\vert a \vert}{\lambda}
+
O(\vert a \vert\vert \partial J(u)\vert^{2}))
 \\
 \leq  &
 -
 \tilde \gamma_{4}\varepsilon \frac{\vert a \vert}{\lambda}
+
C\lambda \vert a \vert^{2} 
\vert \partial J(u)\vert^{2}
\leq 
C\lambda \vert a \vert^{2} 
\vert \partial J(u)\vert^{2}.
\end{split}
\end{equation*}
Consequently $\lambda \vert a \vert^{2}$ is bounded and considering
$\psi=\max\{2\varepsilon,\lambda \vert a \vert^{2}\}$
there necessarily holds
$$
\int^{\infty}_{0}\frac{\vert a \vert}{\lambda}\chi_{\{\lambda \vert a \vert^{2}\geq 2\varepsilon\}}<\infty.
$$
But then
\begin{equation*}
\begin{split}
\partial_{t}\ln \lambda
\leq  &
-
\tilde \gamma_{1}\frac{ H(a)}{\lambda^{3}}
+
\tilde \gamma_{2}\frac{\vert  a \vert^{2}}{\lambda^{2}} 
+
O(\vert \partial J(u)\vert^{2}) \\
\leq& 
-
\tilde \gamma_{1}\frac{ H(a)}{\lambda^{3}}
+
\tilde \gamma_{2}\frac{2\varepsilon}{\lambda^{3}}
\chi_{\{\lambda \vert a \vert^{2}< 2\varepsilon\}} 
+
\tilde \gamma_{2}\frac{\vert  a \vert^{2}}{\lambda^{2}}
\chi_{\{\lambda \vert a \vert^{2}\geq 2\varepsilon\}} 
+
O(\vert \partial J(u)\vert^{2})  
\end{split}
\end{equation*}
by Lemma \ref{lem_modified_shadow_flow}
and, since $\vert a \vert \ll 1 \ll \lambda$, we obtain for $\varepsilon>0$
 sufficiently small
 \begin{equation*}
\begin{split}
\partial_{t}\ln \lambda
\leq& 
\tilde \gamma_{2}\frac{\vert  a \vert}{\lambda}
\chi_{\{\lambda \vert a \vert^{2}\geq 2\varepsilon\}} 
+
O(\vert \partial J(u)\vert^{2})  
\end{split}
\end{equation*}
and the right hand side is integrable in time. Hence $\lambda \longrightarrow \infty$ is impossible.

 \section{Appendix}\label{sec_appendix}

We first recall some  testings of the derivative $\partial J(u)$ with $\phi_{k,i}$ from \cite{MM1}, where we had worked with the representation of $u\in V(\omega,p,\varepsilon)$ based on minimising 
\begin{equation*}\begin{split}
\int 
Ku^{\frac{4}{n-2}}
\vert 
u
-
u_{\tilde \alpha, \tilde \beta}
-
\tilde\alpha^{i}\varphi_{\tilde a_{i}, \tilde \lambda_{i}}
\vert^{2}
\end{split}\end{equation*}
leading to the orthogonalities 
$
v \in \langle u_{\alpha,\beta},\phi_{k,i} \rangle^{\perp_{Ku^{\frac{4}{n-2}}}}.
$ 
 By  Lemma \ref{lem_comparing_orthogonalities} we may carry over these testings to
the representation induced by the minimising  
\begin{equation*}\begin{split}
\Vert 
u
-
u_{\tilde \alpha, \tilde \beta}
- 
\tilde\alpha^{i}\varphi_{\tilde a_{i}, \tilde \lambda_{i}}
\Vert_{L_{g_{0}}}^{2}.
\end{split}\end{equation*}
\begin{proposition}\label{prop_analysing_ski}
For $u\in V(p, \varepsilon)$   and 
\begin{equation*}\begin{split}
\sigma_{k,i}=-\int (L_{g_{0}}u-r\K u^{\frac{n+2}{n-2}})\phi_{k,i}, \, i=1, \ldots,p, \,k=1,2,3
\end{split}\end{equation*} we have
with constants $b_{1}, \ldots,e_{4}>0$
\begin{enumerate}[label=(\roman*)]
  \Item \quad  
\begin{equation*}\begin{split}
\sigma_{1,i}
= &
\alpha_{i}
[ 
\frac{r\alpha_{i}^{\frac{4}{n-2}}K_{i}}{k}
-
4n(n-1)
]
\int \varphi_{i}^{\frac{2n}{n-2}}\\
& +
\sum_{i\neq j =1}^{p}\alpha_{j}
[
\frac{r\alpha_{j}^{\frac{4}{n-2}}K_{j}}{k}-4n(n-1)
] 
b_{1}\varepsilon_{i,j}
 \\
& 
+
d_{1}\alpha_{i}\frac{ H_{i}}{\lambda_{i} ^{n-2}}
+
e_{1}\frac{r\alpha_{i}^{\frac{n+2}{n-2}}}{k}  \frac{\lap K_{i}}{\lambda_{i} ^{2}} 
+
b_{1}
\frac{r\alpha_{i}^{\frac{4}{n-2}}K_{i}}{k}
\sum_{i \neq j=1}^{p}\alpha_{j}
\varepsilon_{i,j} 
\end{split}\end{equation*}
  \Item \quad  
\begin{equation*}\begin{split}
\sigma_{2,i}
= &
-\alpha_{i}
[
\frac{r\alpha_{i}^{\frac{4}{n-2}}K_{i}}{k}
-
4n(n-1)
]
\int \varphi_{i}^{\frac{n+2}{n-2}}\lambda_{i}\partial_{\lambda_{i}}\varphi_{i}\\
& 
-
b_{2}\sum_{i\neq j =1}^{p}\alpha_{j}
[
\frac{r\alpha_{j}^{\frac{4}{n-2}}K_{j}}{k}-4n(n-1)
] 
\lambda_{i} \partial_{\lambda_{i}}\varepsilon_{i,j}
+
d_{2}\alpha_{i}\frac{ H_{i}}{\lambda_{i} ^{n-2}} \\
& +
e_{2}\frac{r\alpha_{i}^{\frac{n+2}{n-2}}}{k} \frac{\lap K_{i}}{\lambda_{i}^{2}}  - 
b_{2}
\frac{r\alpha_{i}^{\frac{4}{n-2}}K_{i}}{k}
\sum_{i \neq j=1}^{p}\alpha_{j}
\lambda_{i}\partial_{\lambda_{i}}\varepsilon_{i,j} 
\end{split}\end{equation*}
  \Item \quad  
\begin{equation*}\begin{split}
\sigma_{3,i}
= &
\alpha_{i}
[
\frac{r\alpha_{i}^{\frac{4}{n-2}}K_{i}}{k}
-
4n(n-1)
]
\int \varphi_{i}^{\frac{n+2}{n-2}}\frac{1}{\lambda_{i}}\nabla_{a_{i}}\varphi_{i}\\
& +
b_{3}\sum_{i\neq j =1}^{p}\alpha_{j}
[
\frac{r\alpha_{j}^{\frac{4}{n-2}}K_{j}}{k}-4n(n-1)
] 
\frac{1}{\lambda_{i}} \nabla_{ a _{i}}\varepsilon_{i,j}
 \\
& 
+
\frac{r\alpha_{i}^{\frac{n+2}{n-2}}}{k}
[
e_{3}\frac{\nabla K_{i}}{\lambda_{i}}
+
e_{4}\frac{\nabla \lap K_{i}}{\lambda_{i}^{3}} 
] 
+
b_{3}\frac{r\alpha_{i}^{\frac{4}{n-2}}K_{i}}{k}
\sum_{i \neq j=1}^{p}
\frac{\alpha_{j}}{\lambda_{i}}\nabla_{a_{i}}\varepsilon_{i,j} 
\end{split}\end{equation*}
\end{enumerate}
up to some  
$
o_{\varepsilon}
(
 \lambda_{i}^{2-n} 
+
\sum_{i\neq j=1}^{p}\varepsilon_{i,j}
)
+
O(
\sum_{r\neq s}\frac{\vert  \nabla K_{r}\vert^{2}}{\lambda_{r}^{2}}
+
\frac{1}{\lambda_{r}^{4}}
+
\frac{1}{\lambda_{r}^{2(n-2)}}
+
\varepsilon_{r,s}^{2}
+
\Vert v\Vert^{2})
.                               
$
\end{proposition}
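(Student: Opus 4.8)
The statement is essentially a transcription of the expansions of \cite{MM1} (also recorded in \cite{may-cv}) to the orthogonality used in the present paper, so the plan is to recall the underlying finite-dimensional computation and then verify that it is insensitive to the change of scalar product. Since $u\in V(p,\varepsilon)\subset X$ has $k_u=1$, we have $\K=K$ and, by Proposition \ref{prop_derivatives_of_J}, $\sigma_{k,i}=-\tfrac12\partial J(u)\phi_{k,i}$. Writing $u=\alpha^{j}\varphi_{j}+v$ I would expand $\sigma_{k,i}$ in $v$ and split it into a pure bubble part evaluated at $\alpha^{j}\varphi_{j}$, a part linear in $v$, and a remainder of the quadratic type $O(\Vert v\Vert^{2})$ already allowed in the claimed error (using the local $C^{2,\alpha}$ bound on $J$ from Proposition \ref{prop_derivatives_of_J} and $\Vert v\Vert=o_{\varepsilon}(1)$).

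For the part linear in $v$, observe that $-\int L_{g_{0}}v\,\phi_{k,i}=-\langle v,\phi_{k,i}\rangle_{L_{g_{0}}}=0$ by the very choice of orthogonality induced by the second minimisation in Proposition \ref{prop_optimal_choice}, so the only surviving linear contribution is the one from differentiating $u^{\frac{n+2}{n-2}}$, namely $r\tfrac{n+2}{n-2}\int Ku^{\frac{4}{n-2}}v\,\phi_{k,i}=r\tfrac{n+2}{n-2}\langle v,\phi_{k,i}\rangle_{Ku^{\frac{4}{n-2}}}$ (replacing $(\alpha^{j}\varphi_{j})^{\frac{4}{n-2}}$ by $u^{\frac{4}{n-2}}$ costs only a quadratic error, as $\tfrac{6-n}{n-2}>0$ for $n=5$). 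This scalar product need not vanish here, but by Lemma \ref{lem_comparing_orthogonalities} it satisfies
\[
\textstyle\langle v,\phi_{k,i}\rangle_{Ku^{\frac{4}{n-2}}}=O\Big(\big(\frac{\vert\nabla K_{i}\vert}{\lambda_{i}}+\frac{1}{\lambda_{i}^{2}}+\frac{1}{\lambda_{i}^{n-2}}+\sum_{j\neq i}\varepsilon_{i,j}+\Vert v\Vert\big)\Vert v\Vert\Big),
\]
which, after inserting the a priori bound on $\Vert v\Vert$ from Proposition \ref{prop_a-priori_estimate_on_v}, lies inside the advertised error. Hence the $L_{g_{0}}$-representation may be treated exactly as the $Ku^{\frac{4}{n-2}}$-representation of \cite{MM1}.

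For the pure bubble part $-\int L_{g_{0}}(\alpha^{j}\varphi_{j})\phi_{k,i}+r\int K(\alpha^{j}\varphi_{j})^{\frac{n+2}{n-2}}\phi_{k,i}$ I would run the standard bubble calculus: the refined bubble equation for $\varphi_{i}=\varphi_{a_{i},\lambda_{i}}$ in conformal normal coordinates at $a_{i}$, whose deviation from $4n(n-1)\varphi_{i}^{\frac{n+2}{n-2}}$ produces the positive-mass term of size $H_{i}/\lambda_{i}^{n-2}$; the Taylor expansion of $K$ around $a_{i}$ to third order, whose moments against $\varphi_{i}^{\frac{n+2}{n-2}}\phi_{k,i}$ yield the terms $\nabla K_{i}/\lambda_{i}$, $\lap K_{i}/\lambda_{i}^{2}$ and $\nabla\lap K_{i}/\lambda_{i}^{3}$; the scalar factor $\big[\tfrac{r\alpha_{i}^{4/(n-2)}K_{i}}{k}-4n(n-1)\big]$, which measures how far $\alpha_{i}$ sits from optimality and enters via \eqref{small_difference}; and the interaction estimates of Lemma \ref{lem_interactions} for the cross contributions $\varepsilon_{i,j}$, $\lambda_{i}\partial_{\lambda_{i}}\varepsilon_{i,j}$ and $\tfrac{1}{\lambda_{i}}\nabla_{a_{i}}\varepsilon_{i,j}$. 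Collecting these, together with the sharp thresholds between leading and remainder terms, is precisely the computation carried out in \cite{may-cv}/\cite{MM1} and gives the displayed right-hand sides of (i)--(iii).

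The genuinely delicate point is not the passage between orthogonalities, which is routine once Lemma \ref{lem_comparing_orthogonalities} is available, but the bookkeeping: one must check that every curvature- and $v$-dependent remainder really fits inside $o_{\varepsilon}(\lambda_{i}^{2-n}+\sum_{j\neq i}\varepsilon_{i,j})+O(\sum_{r\neq s}\vert\nabla K_{r}\vert^{2}/\lambda_{r}^{2}+\lambda_{r}^{-4}+\lambda_{r}^{-2(n-2)}+\varepsilon_{r,s}^{2}+\Vert v\Vert^{2})$. In particular the third-order term $\nabla\lap K_{i}/\lambda_{i}^{3}$ must be kept (it is $o(\lambda_{i}^{2-n})$ precisely because $n=5$) while the fourth-order ones are discarded, and the mass term $H_{i}/\lambda_{i}^{n-2}$ must be tracked on its own rather than swallowed into the $\lambda_{i}^{2-n}$ error, since it carries a definite sign --- the feature exploited later through the positive mass theorem.
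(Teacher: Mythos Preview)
Your proposal is correct and follows essentially the same route as the paper: cite the bubble computation from \cite{may-cv}/\cite{MM1} for the $Ku^{\frac{4}{n-2}}$-orthogonal representation, and then use Lemma~\ref{lem_comparing_orthogonalities} to show that switching to the $L_{g_{0}}$-orthogonality only perturbs the linear-in-$v$ contribution by a term of the admissible error size. One small remark: you do not actually need Proposition~\ref{prop_a-priori_estimate_on_v} to absorb the cross term coming from Lemma~\ref{lem_comparing_orthogonalities}; a straightforward application of Young's inequality already places each product like $\tfrac{|\nabla K_i|}{\lambda_i}\Vert v\Vert$ or $\varepsilon_{i,j}\Vert v\Vert$ into the displayed quadratic error, which is exactly how the paper proceeds.
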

\begin{proof}
This follows from Proposition \ref{I-prop_analysing_ski}  from \cite{may-cv} in case 
$$\langle v,\phi_{k,i} \rangle_{Ku^{\frac{4}{n-2}}}=0.$$ 
In case 
$\langle v,\phi_{k,i} \rangle_{L_{g_{0}}}=0$  we have  from Lemma 
\ref{lem_comparing_orthogonalities}
\begin{equation*}
 \Pi^{\top_{Ku^{\frac{4}{n-2}}}}_{\langle \phi_{k,i}\rangle}
v
=
O(
(
\sum_{r\neq s}\frac{\vert  \nabla K_{r}\vert}{\lambda_{r}}
+
\frac{1}{\lambda_{r}^{2}}
+
\varepsilon_{r,s}
+
\Vert v \Vert)\Vert v\Vert)
\end{equation*}
and  may consequently reduce the latter case to the former one. 
\end{proof}
Likewise we may carry over  Proposition \ref{I-prop_analysing_ski_f} from \cite{may-cv} for the case $u\in V(\omega,p,\varepsilon)$ and $\omega>0$. 
We next analyse the gradient orthogonally. 

\begin{proposition}\label{prop_derivatives_on_H}
Let $u=\alpha^{i}\var_{i}+v\in V(p, \varepsilon)$ and 
$$h_{1},h_{2}\in H= H_{u}(p, \varepsilon).$$ 
Then
\begin{enumerate}[label=(\roman*)]
 \item \quad
$
\vert\partial J( \alpha^{i}\varphi_{i}  )\lfloor_{H}\vert  
= 
O(\sum_{r\neq s} \frac{\vert \nabla K_{r}\vert}{\lambda_{r}} 
+ \frac{\vert \lap K_{r}\vert}{\lambda_{r}^{2}} 
+
\lambda_{r}^{2-n} 
+\varepsilon_{r,s}+\Vert v \Vert^{2}+\vert \partial  J(u)\vert)
$
\item \quad and up to some $o_{\varepsilon}(\Vert h_{1} \Vert\, \Vert h_{2} \Vert)$ we have
\begin{equation*}\begin{split}
\frac{1}{2}\partial^{2}J( \alpha^{i}\varphi_{i}   )h_{1}h_{2}
= &
k^{\frac{2-n}{n}}_{\alpha^{i}\varphi_{i}}
[\int L_{g_{0}}h_{1}h_{2}
-
c_{n}n(n+2)
\sum_{i}\int\varphi_{i}^{\frac{4}{n-2}}h_{1}h_{2}
] \end{split}\end{equation*}
\end{enumerate}
\end{proposition}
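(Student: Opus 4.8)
The plan is to read off both estimates by inserting $w=\alpha^{i}\varphi_{i}$ into the explicit variation formulae of Proposition~\ref{prop_derivatives_of_J}, expanding in the bubbles via Lemma~\ref{lem_interactions}, Taylor expanding $K$ around each concentration point $a_{i}$, and then exploiting the defining orthogonality of $h,h_{1},h_{2}$ to the functions $\phi_{k,i}$. These are exactly the expansions carried out in \cite{may-cv} for the representation attached to the first minimisation in Proposition~\ref{prop_optimal_choice}; the only additional point here is that Proposition~\ref{prop_derivatives_on_H} is asserted for \emph{either} choice $H_{u}(p,\varepsilon)=\langle\phi_{k,i}\rangle^{\perp_{Ku^{4/(n-2)}}}$ or $H_{u}(p,\varepsilon)=\langle\phi_{k,i}\rangle^{\perp_{L_{g_{0}}}}$, and one passes between the two by Lemma~\ref{lem_comparing_orthogonalities} at the cost of the error terms listed there.

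For (ii) I would substitute $u\rightsquigarrow\alpha^{i}\varphi_{i}$, $v\rightsquigarrow h_{1}$, $w\rightsquigarrow h_{2}$ into Proposition~\ref{prop_derivatives_of_J}(ii). The two rank-one blocks carry the factors $\int L_{g_{0}}(\alpha^{j}\varphi_{j})h_{\bullet}$ and $\int K(\alpha^{j}\varphi_{j})^{\frac{n+2}{n-2}}h_{\bullet}$; using $L_{g_{0}}\varphi_{j}=4n(n-1)\varphi_{j}^{\frac{n+2}{n-2}}+O(H_{j}\lambda_{j}^{2-n})$ and replacing $K$ by $K_{j}$ on the support of $\varphi_{j}$, both reduce to $\langle\varphi_{j},h_{\bullet}\rangle$-type pairings, hence are $o_{\varepsilon}(\Vert h_{\bullet}\Vert)$ by orthogonality of $h_{\bullet}$ (directly when $H$ is the $L_{g_{0}}$-orthogonal complement, via Lemma~\ref{lem_comparing_orthogonalities} otherwise); thus both blocks are $o_{\varepsilon}(\Vert h_{1}\Vert\,\Vert h_{2}\Vert)$. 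In the surviving block, $\int L_{g_{0}}h_{1}h_{2}$ is kept, while in $-\tfrac{n+2}{n-2}\tfrac{r}{k}\int K(\alpha^{j}\varphi_{j})^{\frac{4}{n-2}}h_{1}h_{2}$ the cross contributions ($i\neq j$) are $O(\varepsilon_{i,j})$ by Lemma~\ref{lem_interactions}, $K$ is replaced by $K_{i}$ near $a_{i}$, and $\tfrac{r}{k}\alpha_{i}^{4/(n-2)}K_{i}=4n(n-1)+o_{\varepsilon}(1)$ by \eqref{small_difference}; since $c_{n}=4\tfrac{n-1}{n-2}$ one has $\tfrac{n+2}{n-2}\,4n(n-1)=c_{n}n(n+2)$, and this reproduces the claimed identity with all deviations absorbed into $o_{\varepsilon}(\Vert h_{1}\Vert\,\Vert h_{2}\Vert)$.

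For (i) I would write $\tfrac12\partial J(\alpha^{i}\varphi_{i})h=-k_{0}^{(2-n)/n}\int\big(L_{g_{0}}(\alpha^{j}\varphi_{j})-\tfrac{r_{0}}{k_{0}}K(\alpha^{j}\varphi_{j})^{\frac{n+2}{n-2}}\big)h$ from Proposition~\ref{prop_derivatives_of_J}(i), with $r_{0},k_{0}$ evaluated at $\alpha^{i}\varphi_{i}$, and expand the integrand exactly as in the proof of Proposition~\ref{prop_analysing_ski}: the leading bubble part $\sum_{j}\alpha_{j}\big[4n(n-1)-\tfrac{r_{0}}{k_{0}}\alpha_{j}^{4/(n-2)}K_{j}\big]\varphi_{j}^{\frac{n+2}{n-2}}$, the first- and second-order Taylor terms of $K$ at $a_{j}$ (of sizes $\tfrac{|\nabla K_{j}|}{\lambda_{j}}$ and $\tfrac{|\Delta K_{j}|}{\lambda_{j}^{2}}$ after integrating the concentrated bubble), the $H_{j}\lambda_{j}^{2-n}$ defect of $L_{g_{0}}\varphi_{j}$, and the $O(\varepsilon_{i,j})$ interactions. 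Pairing with $h$ and using that $h$ is ($L_{g_{0}}$-, or via Lemma~\ref{lem_comparing_orthogonalities} $Ku^{4/(n-2)}$-) orthogonal to $\varphi_{j},-\lambda_{j}\partial_{\lambda_{j}}\varphi_{j},\tfrac1{\lambda_{j}}\nabla_{a_{j}}\varphi_{j}$ annihilates the principal parts and leaves precisely the quantities on the right-hand side of (i); the defect $\tfrac{r_{0}}{k_{0}}\alpha_{j}^{4/(n-2)}K_{j}-4n(n-1)$ is controlled by $|\partial J(u)|$ together with the geometric quantities already in the list through \eqref{small_difference} and $\sigma_{1,j}=O(|\partial J(u)|)$, and the mismatch between the two orthogonality conventions --- each factor of which is individually small --- produces only cross terms which by Cauchy--Schwarz are dominated by $\Vert v\Vert^{2}$ plus the listed quantities. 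I expect the main obstacle to be exactly this bookkeeping: checking that after the orthogonal projection no uncancelled term of size $\Vert v\Vert$ (rather than $\Vert v\Vert^{2}$), and nothing outside the stated list, survives; this requires the sharp form of Lemma~\ref{lem_comparing_orthogonalities} and the precise conformal normal coordinate expansion of $L_{g_{0}}\varphi_{a,\lambda}$ (hence the positive mass theorem governing the $H_{j}$-terms). Since, once the orthogonality transfer is in place, the computation is verbatim \cite{may-cv}, I would simply refer to the corresponding statement there.
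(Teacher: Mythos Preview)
Your proposal is correct and follows essentially the same route as the paper: reduce to the corresponding statement in \cite{may-cv} for the $Ku^{4/(n-2)}$--orthogonality, and transfer to the $L_{g_{0}}$--orthogonality via Lemma~\ref{lem_comparing_orthogonalities}. The only point worth noting is that for (i) the paper makes the ``$\Vert v\Vert^{2}$ versus $\Vert v\Vert$'' bookkeeping you flag explicit by a slightly different device than your direct expansion: for $h\in\langle\phi_{k,i}\rangle^{\perp_{L_{g_{0}}}}$ it sets $\tilde h=\Pi^{\top_{Ku^{4/(n-2)}}}_{\langle\phi_{k,i}\rangle}h$, applies \cite{may-cv} to $h-\tilde h$, and for the remainder uses $\partial J(\alpha^{i}\varphi_{i})=\partial J(u)+O(\Vert v\Vert)$ so that $\partial J(\alpha^{i}\varphi_{i})\tilde h=O\big((|\partial J(u)|+\Vert v\Vert)\,\Vert\tilde h\Vert\big)$, which is quadratically small since $\Vert\tilde h\Vert$ is already of the order in Lemma~\ref{lem_comparing_orthogonalities}. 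This is equivalent to your cross-term argument but avoids re-running the full expansion.
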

\begin{proof}
Cf.  Proposition \ref{I-prop_derivatives_on_H} from \cite{may-cv}  in case 
$
H_{u}(p,\varepsilon)
=
\langle \phi_{k,i} \rangle^{\perp_{Ku^{\frac{4}{n-2}}}}.
$
 In case 
 \begin{equation*}
H_{u}(p,\varepsilon)
=
\langle \phi_{k,i} \rangle^{\perp_{L_{g_{0}}}}
 \end{equation*}
 statement  $(ii)$ still holds true by virtue of Lemma \ref{lem_comparing_orthogonalities}.  Also note, that for 
$$h\in \langle \phi_{k,i} \rangle^{\perp_{L_{g_{0}}}}
\; \text{ with }\; \Vert h \Vert=1$$ 
we have again by Lemma \ref{lem_comparing_orthogonalities}
$$
\tilde h
=
\Pi^{\top_{Ku^{\frac{4}{n-2}}}}_{\langle \phi_{k,i}\rangle}
h
=
O
(
\frac{\vert  \nabla K_{i}\vert}{\lambda_{i}}
+
\frac{1}{\lambda_{i}^{2}}
+
\frac{1}{\lambda_{i}^{n-2}}
+
\sum_{j\neq i}\varepsilon_{i,j}
+
\Vert v \Vert)$$
and hence, since $\partial J(u)=\partial J(\alpha^{i}\varphi_{i})+O(\Vert v \Vert),$
\begin{equation*}
\begin{split}
\partial J(\alpha^{i}\varphi_{i})\tilde h
= &
(\partial J(u)+O(\Vert v \Vert))\tilde h \\
= &
O
(
\frac{\vert  \nabla K_{i}\vert^{2}}{\lambda_{i}^{2}}
+
\frac{1}{\lambda_{i}^{4}}
+
\frac{1}{\lambda_{i}^{2(n-2)}}
+
\sum_{j\neq i}\varepsilon_{i,j}^{2}
+
\Vert v \Vert^{2}
+
\vert \partial J(u)\vert^{2}
).
\end{split}
\end{equation*}
Hence the Proposition  follows.
\end{proof}

\begin{proposition}\label{prop_derivatives_on_H_f}
Let $u=u_{\alpha, \beta}+\alpha^{i}\var_{i}+v\in V(\omega, p, \varepsilon)$  and 
$$h_{1},h_{2}\in H= H_{u}(\omega, p, \varepsilon).$$
Then
\begin{enumerate}[label=(\roman*)]
  \item \quad  
$
\vert \partial J(  u_{\alpha, \beta}  + \alpha^{i}\varphi_{i} )\lfloor_{H}\vert 
= 
o_{\varepsilon}(\Vert v \Vert) 
+
O(\sum_{r\neq s} \frac{\vert \nabla K_{r}\vert}{\lambda_{r}} 
+
\lambda_{r}^{\frac{2-n}{2}} +\varepsilon_{r,s}
+
\vert \partial J(u)\vert)
$
  \item \quad  and up to some $o_{\varepsilon}(\Vert h_{1} \Vert \Vert h_{2} \Vert)$ we have
 \begin{equation*}\begin{split}
\frac{1}{2}\partial^{2} & J(  u_{\alpha, \beta}  + \alpha^{i}\varphi_{i} )h_{1}h_{2}  \\
= &
k_{u_{\alpha, \beta}+\alpha^{i}\varphi}^{\frac{2-n}{n}}
[
\int L_{g_{0}}h_{1}h_{2}
-
c_{n}n(n+2)
\int 
(\frac{K\omega  ^{\frac{4}{n-2}}}{4n(n-1)}
+
\sum_{i}\varphi_{i}^{\frac{4}{n-2}}
)
h_{1}h_{2}
] 
.
\end{split}\end{equation*}
\end{enumerate}
\end{proposition}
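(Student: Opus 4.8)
The plan is to transcribe the proof of Proposition \ref{prop_derivatives_on_H} to the present $\omega>0$ situation, replacing its inputs by the corresponding $\omega>0$ statements. When the orthogonality defining $H$ is taken with respect to $Ku^{\frac{4}{n-2}}$, that is $H=H_{u}(\omega,p,\varepsilon)=\langle u_{\alpha,\beta},\partial_{\beta_{i}}u_{\alpha,\beta},\phi_{k,i}\rangle^{\perp_{Ku^{\frac{4}{n-2}}}}$, both $(i)$ and $(ii)$ are exactly the content of Proposition \ref{I-prop_derivatives_on_H_f} from \cite{may-cv} (the orthogonal-directions analog of Proposition \ref{I-prop_analysing_ski_f}, with $\vert\partial J(u)\vert$ replacing $\vert\delta J(u)\vert$ just as in the proof of Proposition \ref{prop_analysing_ski}), so nothing further is needed in that case.

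The only genuinely new case is $H=\langle u_{\alpha,\beta},\partial_{\beta_{i}}u_{\alpha,\beta},\phi_{k,i}\rangle^{\perp_{L_{g_{0}}}}$, which I would reduce to the previous one by Lemma \ref{lem_comparing_orthogonalities}$(ii)$ for $\omega>0$: for $h\in H$ with $\Vert h\Vert=1$ the projection $\tilde h=\Pi^{\top_{Ku^{\frac{4}{n-2}}}}_{\langle u_{\alpha,\beta},\partial_{\beta_{i}}u_{\alpha,\beta},\phi_{k,i}\rangle}h$ satisfies $\tilde h=O(\delta)$ with $\delta:=\frac{\vert\nabla K_{i}\vert}{\lambda_{i}}+\frac{1}{\lambda_{i}^{\frac{n-2}{2}}}+\sum_{j\neq i}\varepsilon_{i,j}+\Vert v\Vert=o_{\varepsilon}(1)$, while $h-\tilde h$ lies in the $Ku^{\frac{4}{n-2}}$-orthogonal complement. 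For $(ii)$ one then splits $\partial^{2}J(u_{\alpha,\beta}+\alpha^{i}\varphi_{i})h_{1}h_{2}$ into the term with arguments $h_{j}-\tilde h_{j}$ — which yields the claimed formula up to $o_{\varepsilon}(\Vert h_{1}\Vert\Vert h_{2}\Vert)$, since inserting $h_{j}-\tilde h_{j}$ in place of $h_{j}$ in the two integrals of the formula costs only $O(\delta)$ — together with the cross and diagonal terms carrying at least one $\tilde h_{j}$, each of order $O(\delta\Vert h_{1}\Vert\Vert h_{2}\Vert)$ by the uniform bound on $\partial^{2}J$ over $V(\omega,p,\varepsilon)$, cf. Proposition \ref{prop_derivatives_of_J}. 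For $(i)$ one decomposes $\partial J(u_{\alpha,\beta}+\alpha^{i}\varphi_{i})h$ analogously: the part evaluated on $h-\tilde h$ is controlled by the $Ku^{\frac{4}{n-2}}$-case of $(i)$, whereas the part on $\tilde h$ equals $(\partial J(u)+O(\Vert v\Vert))\tilde h=O(\vert\partial J(u)\vert\delta+\Vert v\Vert\delta)$, and each term arising here — among them $\frac{\vert\nabla K_{i}\vert^{2}}{\lambda_{i}^{2}}$, $\frac{1}{\lambda_{i}^{n-2}}$, $\sum_{j\neq i}\varepsilon_{i,j}^{2}$ and $\Vert v\Vert^{2}$ — is absorbed into the advertised right-hand side using $\frac{1}{\lambda_{i}^{n-2}}\leq\lambda_{i}^{\frac{2-n}{2}}$ and $\Vert v\Vert^{2}=o_{\varepsilon}(\Vert v\Vert)$. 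The general $u\in V(\omega,p,\varepsilon)$ is handled verbatim.

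The main obstacle — mild, but the one point requiring genuine care — is the bookkeeping of the $\omega>0$ error exponents. In contrast to the $\omega=0$ case, Lemma \ref{lem_comparing_orthogonalities}$(ii)$ only supplies the weaker $\lambda_{i}^{-\frac{n-2}{2}}$ (versus $\lambda_{i}^{2-n}$) and involves the extra span directions $u_{\alpha,\beta},\partial_{\beta_{i}}u_{\alpha,\beta}$; one must therefore check that squaring this projection bound still lands inside the claimed error and that the weaker $o_{\varepsilon}(\Vert v\Vert)$ in $(i)$ — rather than $O(\Vert v\Vert^{2})$ — is precisely what the $\beta$-directions force, exactly as in the corresponding statement of \cite{may-cv}.
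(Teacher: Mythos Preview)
Your proposal is correct and follows essentially the same route as the paper: cite Proposition \ref{I-prop_derivatives_on_H_f} from \cite{may-cv} for the $Ku^{\frac{4}{n-2}}$-orthogonal case, and for the $L_{g_{0}}$-orthogonal case reduce via Lemma \ref{lem_comparing_orthogonalities}(ii) for $\omega>0$, arguing exactly as in the proof of Proposition \ref{prop_derivatives_on_H}. Your explicit bookkeeping of the $\omega>0$ error exponents and the observation that $\Vert v\Vert^{2}=o_{\varepsilon}(\Vert v\Vert)$ accounts for the weaker $o_{\varepsilon}(\Vert v\Vert)$ in (i) is precisely the care point the paper leaves implicit.
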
  

\begin{proof}
Cf.  Proposition \ref{I-prop_derivatives_on_H_f}  in \cite{may-cv} in case 
$
H_{u}(p,\varepsilon)    
=
\langle \phi_{k,i} \rangle^{\perp_{Ku^{\frac{4}{n-2}}}}.
$
In case 
 \begin{equation*}
H_{u}(\omega,p,\varepsilon)
=
\langle u_{\alpha,\beta},\phi_{k,i} \rangle^{\perp_{L_{g_{0}}}}
 \end{equation*}
 the statement follow from the former case via Lemma \ref{lem_comparing_orthogonalities} arguing as in the proof of Proposition \ref{prop_derivatives_on_H}.
\end{proof}


\begin{proof}[\textbf{Proof of Lemma \ref{lem_comparing_orthogonalities}}] 
Let us  just show the case 
$$ 
\nu_{1}\in H_{u}( p, \varepsilon)=\langle \phi_{k,i}\rangle
^{\perp_{Ku^{\frac{4}{n-2}}}},
$$ 
as the other cases follow analogously.
We may write with suitable  $\beta^{k,i}=O(1)$ and arbitrary $\alpha\in \R$
\begin{equation*}
\begin{split}
\Pi^{\top_{L_{g_{0}}}}_{\langle \phi_{k,i}\rangle}
\nu_{1}
= &
\beta^{k,i}\langle \nu_{1},\phi_{k,i}\rangle_{L_{g_{0}}}\phi_{k,i}
=
\beta^{k,i}\langle \nu_{1},L_{g_{0}}\phi_{k,i}\rangle_{L^{2}_{g_{0}}}\phi_{k,i} \\
= &
\beta^{k,i}\langle \nu_{1},(L_{g_{0}}-\alpha Ku^{\frac{4}{n-2}})\phi_{k,i}\rangle_{L^{2}_{g_{0}}}\phi_{k,i}.
\end{split}
\end{equation*}
From  Lemma \ref{lem_interactions}  we then find via expansion and H\"older's inequality
\begin{equation*}
\begin{split}
\int Ku^{\frac{4}{n-2}}\phi_{k,i}\nu_{1}
= &
K_{i}\int (\alpha^{j}\varphi_{j}+v)^{\frac{4}{n-2}}\phi_{k,i}\nu_{1}
+
O((\frac{\vert  \nabla K_{i}\vert}{\lambda_{i}}+\frac{1}{\lambda_{i}^{2}})\Vert \nu_{1} \Vert)
\\
= &
K_{i}\int (\alpha^{j}\varphi_{j})^{\frac{4}{n-2}}\phi_{k,i}\nu_{1}
+
O((\frac{\vert  \nabla K_{i}\vert}{\lambda_{i}}+\frac{1}{\lambda_{i}^{2}}+\Vert v \Vert)\Vert \nu_{1} \Vert).
\end{split}
\end{equation*}
Decomposing 
\begin{equation*}
M=
\{
\varphi_{i}\geq \sum_{i\neq j=1}^{p}\varphi_{j}
\}
+
\{
\varphi_{i}\leq \sum_{i\neq j=1}^{p}\varphi_{j}
\} 
\end{equation*}
and applying again Lemma \ref{lem_interactions} then show via expansion and H\"older inequality
\begin{equation*}
\begin{split}
\int Ku^{\frac{4}{n-2}}\phi_{k,i}\nu_{1}
= &
K_{i}\alpha_{i}^{\frac{4}{n-2}}\int \varphi_{i}^{\frac{4}{n-2}}\phi_{k,i}\nu_{1} \\
& +
O(
(
\frac{\vert  \nabla K_{i}\vert}{\lambda_{i}}
+
\frac{1}{\lambda_{i}^{2}}
+
\sum_{j\neq i}\varepsilon_{i,j}
+  
\Vert v \Vert)\Vert \nu_{1} \Vert),
\end{split}
\end{equation*}
where we made use of $n=3,4,5$.
Consequently
\begin{equation*}
\begin{split}
\Pi^{\top_{L_{g_{0}}}}_{\langle \phi_{k,i}\rangle}
\nu_{1}
= &
\beta^{k,i}\langle \nu_{1},(L_{g_{0}}
-
\alpha K_{i}\alpha_{i}^{\frac{4}{n-2}}
\varphi_{i}^{\frac{4}{n-2}})\phi_{k,i}\rangle_{L^{2}_{g_{0}}}\phi_{k,i} \\
& +
O(
(
\sum_{r\neq s}\frac{\vert  \nabla K_{r}\vert}{\lambda_{r}}
+
\frac{1}{\lambda_{r}^{2}}
+ 
\varepsilon_{r,s}
+
\Vert v \Vert)\Vert \nu_{1} \Vert). 
\end{split}
\end{equation*}
Note, that $L_{g_{0}}\phi_{k,i}=c_{k}\varphi_{i}^{\frac{4}{n-2}}\phi_{k,i}$ on $\R^{n}$ for suitable constants $c_{k}$, while  
\begin{equation*}
\Vert L_{g_{0}}\phi_{k,i}-c_{k}\varphi_{i}^{\frac{4}{n-2}}\phi_{k,i})\Vert_{L^{\frac{2n}{n+2}}}
=
O(\frac{1}{\lambda_{i}^{2}}+\frac{1}{\lambda_{i}^{n-2}})
\end{equation*}
generally, cf. Lemma \ref{II-lem_emergence_of_the_regular_part} in \cite{MM1}.  Hence choosing $\alpha$ suitably, we derive 
\begin{equation*}
\Pi^{\top_{L_{g_{0}}}}_{\langle \phi_{k,i}\rangle}
\nu_{1}
=
O(
(
\sum_{r\neq s}\frac{\vert  \nabla K_{r}\vert}{\lambda_{r}} 
+
\frac{1}{\lambda_{r}^{2}}
+
\frac{1}{\lambda_{r}^{n-2}}
+
\varepsilon_{r,s}
+
\Vert v \Vert)\Vert \nu_{1} \Vert),
\end{equation*}
what had to be shown.    
\end{proof}


\begin{thebibliography}{flushleft}
\bibitem{Amacha_Regbaoui}
Amacha, I., Regbaoui, R. 
\textit{Yamabe Flow with Prescribed Scalar Curvature}
Pacific Journal of Mathematics, Volume 297, No. 2




  

\bibitem{BahriCriticalPointsAtInfinity}
{
Bahri, A. 
\textit
{
Critical points at infinity in some variational problems. 
}
Pitman Research Notes in Mathematics Series, \textbf{182}. Longman Scientific \& Technical, Harlow; copublished in the United States with John Wiley \& Sons, Inc., New York, 1989
}

\bibitem{Bahri_Addendum_To_Bible}
{Bahri A.
\textit{
Addenda to the book "Critical points at infinity
in some variational problems" and to the paper
"The scalar-curvature problem on the standard
three-dimensional sphere" 
} 
Annales de l'I. H. P., section C, tome 13, n$^{\circ}$ 6 (1996), p. 733-739 
}  


\bibitem{BahriCoronCriticalExponent}
{ Bahri, A., Coron, J.-M. 
\textit
{
On a nonlinear elliptic equation involving the critical Sobolev exponent: the effect of the topology of the domain}
Comm. Pure Appl. Math. \textbf{41} (1988), no. 3, 253-294
}

\bibitem{Bahri_Coron_3_Sphere}
{
Bahri A., Coron J-M.
\textit{The Scalar-Curvature
Problem on
the Standard Three- Dimensional
Sphere}
Journal of Functional Analysis 
\textbf{95},  106-172 ( 1991)
}
 
\bibitem{BenAyed_n=4}
{
Ben Ayed, M.; Chen, Y.; Chtioui, H.; Hammami, M. \\
\textit{On the prescribed scalar curvature problem on 4-manifolds}
Duke Math. J. \textbf{84} (1996), no. 3, 633-677
}



\bibitem{short_proof_Brendle}
{ 
Brendle S. 
\textit{A Short Proof for the Convergence of the Yamabe Flow on $S^{n}$}
Pure and Applied Mathematics Quarterly
Volume 3 (2007)
Number 2  
}



\bibitem{BrendleArbitraryEnergies}
{
Brendle, S. 
\textit{
Convergence of the Yamabe flow for arbitrary initial energy} \\
J. Differential Geom. \textbf{69} (2005), no. 2, 217-278
}



  
\bibitem{SlowConvergence}
{
Carlotto A., Chodosh O.,   Rubinstein, Y. 
\textit{Slowly converging Yamabe flows}\\
Geometry and Topology, Volume 19, Number 3 (2015), 1523-1568.
}




\bibitem {Guenter}
{
G\"unther, M. 
\textit
{
Conformal normal coordinates}\\
Ann. Global Anal. Geom. \textbf{11} (1993), no. 2, 173-184
}
 

\bibitem{LeeAndParker} 
{
Lee, J. M., Parker, T. H. 
\textit{
The Yamabe problem}\\
Bull. Amer. Math. Soc. (N.S.) \textbf{17} (1987), no. 1, 37-91
}


\bibitem{MM1} Malchiodi, A., Mayer, M.
{
\textit{Prescribing Morse scalar curvatures: blow-up analysis}\\
arXiv:1812.09457, to appear on International Mathematical Research Notes
}

\bibitem{MM2} Malchiodi A., Mayer M., 
Prescribing Morse scalar curvatures: subcritical blowing-up solutions,  	arXiv:1812.09461, to appear on Journ. of Diff. Equations.


\bibitem{MM4} Malchiodi A., Mayer M., Prescribing Morse scalar curvatures: pinching and Morse theory, arxiv:1909.03190 

 
 
\bibitem{may-cv} 
{Mayer, M. 
\textit{A scalar curvature flow in low dimensions} \\
Calc. Var. Partial Differential Equations 56 (2017), no. 2, Art. 24, 41 pp.
}


\bibitem{MM3} Mayer, M.
{
\textit{Prescribing Morse scalar curvatures: critical points at infinity} arXiv:1901.06409
}






\bibitem{StruweConcentrationCompactness}
{
Struwe, M.
\textit
{
A global compactness result for elliptic boundary value problems involving limiting nonlinearities}
Math. Z. \textbf{187} (1984), no. 4, 511-517 
}  

\bibitem{StruweLargeEnergies} 
{
 Schwetlick, H.; Struwe, M.
\textit{Convergence of the Yamabe flow for "large'' energies}\\
J. Reine Angew. Math. \textbf{562} (2003), 59-100
}

  

\bibitem{Ye} 
{
Ye, R. 
\textit{Global existence and convergence of the Yamabe flow
}
\\J. Differential Geom. 39 (1994) 35-50
}
\end{thebibliography}
\end{document}